\documentclass[11pt, twoside]{article}
\usepackage[english]{babel}
\usepackage[utf8]{inputenc}
\usepackage[a4paper,top=2.25cm,bottom=2.5cm,left=2.25cm,right=2.25cm]{geometry}
\usepackage{indentfirst}
\usepackage{hyperref}
\usepackage{amsmath,amsfonts,amsthm,mathtools,amssymb,physics}
\usepackage{url}
\usepackage{authblk}
\usepackage{algorithm,algpseudocode}
\usepackage{wrapfig}
\usepackage{tikz,pgfplots,quiver}
\usetikzlibrary{shapes.geometric, arrows.meta, shadings}
\usepgfplotslibrary{groupplots}
\usepackage{enumitem}
\usepackage{graphicx}
\usepackage{subcaption}
\usepackage{float}
\usepackage{mdframed}
\usepackage[capitalize]{cleveref}
\usepackage[acronym]{glossaries}
\usepackage{xcolor}
\hypersetup{
    colorlinks,
    allcolors={black},
}
\usepackage{titlesec}
\titlespacing*{\paragraph}{0pt}{0.4em}{0.4em}

\newcommand{\footnotefirstpage}[1]{
{
\begin{NoHyper}
\renewcommand\thefootnote{}\noindent\footnote{#1}%
\addtocounter{footnote}{-1}%
\end{NoHyper}
}}

\usepackage{mdframed}
\newtheorem{theorem}{Theorem}[section]
\newmdtheoremenv[
    hidealllines=true,
    backgroundcolor=blue!5,
]{boxedtheorem}[theorem]{Theorem}

\newtheorem{proposition}[theorem]{Proposition}

\theoremstyle{definition}\newtheorem{definition}[theorem]{Definition}
\theoremstyle{remark}\newtheorem{remark}[theorem]{Remark}

\newcommand{\R}{\mathbb{R}}
\newcommand{\N}{\mathbb{N}}

\DeclareMathOperator{\Id}{Id}

\newcommand{\seq}{_{N\in\N}}

\renewcommand{\d}{\mathrm{d}}

\newcommand{\supp}{\mathrm{supp}\,}

\newcommand{\prob}{\mathcal{P}}
\newcommand{\probc}{\mathcal{P}_c}

\newcommand{\M}{\mathcal{M}}

\newcommand{\bfmu}{\boldsymbol{\mu}}

\newcommand{\bfrho}{\boldsymbol{\rho}}

\newcommand{\bfy}{\mathbf{y}}
\newcommand{\bfz}{\mathbf{z}}
\newcommand{\bfx}{\mathbf{x}}
\newcommand{\control}{\mathbf{u}}

\newcommand{\bfw}{\mathbf{w}}
\newcommand{\bfv}{\mathbf{v}}

\newcommand{\bfwtau}{\mathbf{w}^{\omega}_\tau}
\newcommand{\bfvtau}{\mathbf{v}^{\omega}_\tau}

\newcommand{\bfp}{\mathbf{p}}
\newcommand{\bfq}{\mathbf{q}}

\newcommand{\bfpsi}{\mathbf{\Psi}}
\newcommand{\bfphi}{\mathbf{\Phi}}
\newcommand{\bftheta}{\mathbf{\Theta}}

\newcommand{\dc}{{c}}

\newcommand{\A}{\mathcal{A}}

\newcommand{\U}{\mathcal{U}}

\newacronym{acr:pmp}{PMP}{Pontryagin Maximum Principle}

\title{Sparse optimal control in the Wasserstein space}

\author[1]{Enrico Sartor}
\author[2]{Florian D\"orfler}
\author[3]{Nicolas Lanzetti}
\affil[1]{Laboratoire des Signaux et Systèmes, Université Paris-Saclay, CentraleSupélec, CNRS\protect\\
\texttt{enrico.sartor@centralesupelec.fr}}
\affil[2]{Automatic Control Laboratory, ETH Zurich\protect\\ \texttt{dorfler@ethz.ch}}
\affil[3]{Department of Computing and Mathematical Sciences, California Institute of Technology\protect\\ \texttt{lnicolas@caltech.edu}}

\begin{document}

\maketitle

\footnotefirstpage{%
This research was supported by the Swiss National Science Foundation under the NCCR Automation, grant agreement 51NF40\_180545. This research was conducted when all authors were affiliated with the Automatic Control Laboratory at ETH Zurich. 
}

\begin{abstract}
    We study sparse optimal control of a non-local continuity equation, where the goal is to steer a distribution via finitely many controllable agents or actuators. This model arises naturally in mean-field multi-agent systems and takes the form of a coupled PDE-ODE system where the PDE describes the evolution of the distribution and the controlled ODE captures the dynamics of the controllable agents. 
A natural objective is distribution steering via terminal costs based on optimal transport, such as the squared Wasserstein distance. These costs are problematic for finite-agent formulations due to non-smoothness at empirical measures and they fall outside common expected-value-type cost classes.
We address these challenges by studying the resulting optimal control problem in the Wasserstein space. Under suitable assumptions on the system dynamics and Wasserstein differentiability of the terminal cost (with no smoothness requirement on the associated Wasserstein gradient), we prove first-order sensitivity of the control-to-state map, derive an adjoint system and an explicit formula for the gradient of the cost functional, and obtain Pontryagin-type necessary conditions. To illustrate the resulting adjoint-based method, we present numerical experiments on a representative distribution-splitting task.

\end{abstract}

\section{Introduction}
This work focuses on the \emph{sparse} optimal control of a non-local continuity equation, whereby the aim is to control a (probability) distribution through a finite number of agents or actuators. 
This model is mainly motivated by mean-field models of large-scale multi-agent systems. 
An emblematic problem instance is the shepherding problem \cite{lien2004shepherding, lama2024shepherding}, in which dogs steer a flock of sheep, while further applications include drone fleet coordination~\cite{perea2009extension}, traffic flow control~\cite{treiber2013traffic, gugat2005optimal}, consensus in distributed systems~\cite{perez2018dynamic}, and evacuation dynamics~\cite{zhou2019guided}; for a broad overview of these problems, we refer to~\cite{chen2019control, piccoli2023control}. In such systems, the focus is on \emph{collective} behavior rather than individual trajectories, and controlling every agent directly is impractical and often impossible, which naturally leads to sparse control formulations over distributions as in~\cite{fornasier2014mean, burger2021mean}.

More formally, we model our problem setting via the coupled ODE-PDE system
\begin{equation}\label{eq:system_dynamics}
    \begin{cases}
        \partial_t\mu(t)+\nabla_x\cdot\bigl(F[\mu(t)](\cdot, y(t))\,\mu(t)\bigr)=0\\
        \dot{y}(t)=G[\mu(t)](y(t))+\control(t),
    \end{cases}
\end{equation}
supplemented with initial conditions $\mu(0)=\mu_0$ and $y(0)=y_0$, where $F$ and $G$ are non-local vector fields and $\control$ is the control input. Here and throughout, dependence on the distribution $\mu$ is denoted by square brackets, the symbols $y$ and $\mu$ denote the state variables (i.e., the unknowns in the ODE--PDE system), and the boldface symbols $\bfy$, $\bfmu$, and $\control$ denote time-dependent trajectories/functions.
The system dynamics~\eqref{eq:system_dynamics} consists of a non-local continuity equation, modeling the time evolution of the distribution $\bfmu(t)$, coupled with an ODE modeling the controllable state $\bfy(t)$.
In our sparse control setting, the control $\control(t)$ acts only on the ODE component. In other words, we cannot \emph{directly} control the distribution $\bfmu(t)$; instead, we influence it \emph{indirectly} through the finite dimensional dynamics of $\bfy(t)$.
For example, in the shepherding problem, $\bfmu(t)$ denotes the distribution of sheep in the plane and $\bfy(t)$ the position of the dogs, on which the control acts directly. The field $F$ encodes the sheep dynamics, both their mutual interactions and their reaction to the dogs, while $G$ specifies the dogs' dynamics in combination with the control input.

The control input is chosen to minimize a general terminal cost $\psi$ that depends on the distribution at some fixed terminal time $T$; e.g., think of splitting a sheep flock and steering it to two distinct points (see~\cref{fig: setting}).
The resulting optimal control problem reads
\begin{equation}\label{eq:control_problem}
\begin{aligned}
    \inf_{\control\in\mathcal{U}}\:\: &\psi\left(\bfmu(T;\control)\right),
\end{aligned}
\end{equation}
where $\bfmu(T;\control)$ denotes the ``measure component'' of the solution of \eqref{eq:system_dynamics} with control $\control$ evaluated at the terminal time $T$ and $\mathcal{U}$ is the set of admissible controls.

A choice of particular interest is $\psi(\mu)=\tfrac{1}{2}W_2(\mu,\hat\mu)^2$, where $W_2$ denotes the $2$-Wasserstein distance and $\hat\mu$ is the desired terminal configuration.
When the objective is to steer the terminal distribution toward $\hat\mu$, $W_2$ is well-suited to this task because it compares probability measures in a permutation-invariant way, it applies to both finite and infinite populations, and, being induced by optimal transport plans, it captures the geometry of the ambient space (unlike $L^p$ norms or statistical divergences).
It can also be viewed as the analog on the space of probability measures of the quadratic costs used in classical optimal control (e.g., linear quadratic regulator). However, the Wasserstein distance, as well as more general optimal transport costs, poses two main difficulties in optimal control. First, when passing to a finite-agent formulation by plugging in empirical measures, the resulting objective is typically non-smooth, since optimal transport plans may be non-unique and optimal transport maps do not always exist at atomic measures. Second, these costs are not of integral (expected-value) form, nor simple functions thereof, and therefore fall outside frameworks tailored to such objectives.

\begin{figure}[t]
        \centering
        \begin{tikzpicture}[>=Stealth, font=\footnotesize, scale=.9]

    \colorlet{densityColor}{blue!50!white}
    \colorlet{leaderColor}{red!80!black}
    \colorlet{targetColor}{black}

    \begin{scope}[shift={(0,0)}]
        \shade[inner color=densityColor, outer color=white] 
            plot [smooth cycle, tension=0.7] coordinates {
                (1.1,0.3) (0.3,0.75) (-0.8,0.4) (-1.2,-0.3) (-0.4,-0.75) (0.9,-0.4)
            };
        \node[blue!80!black] at (0, -1.1) {$\mu(0)$};
        
        \fill[leaderColor] (-1.5, -0.7) circle (2.5pt) node[below, leaderColor] {$y_1(0)$};
        \fill[leaderColor] (1.5, 0.7) circle (2.5pt) node[above, leaderColor] {$y_2(0)$};

        \node[draw, targetColor, fill=targetColor, diamond, inner sep=1.8pt] at (-1.8, 1.0) {};
        \node[above=2pt] at (-1.8, 1.0) {Target 1};
        
        \node[draw, targetColor, fill=targetColor, diamond, inner sep=1.8pt] at (1.8, -1.0) {};
        \node[below=2pt] at (1.8, -1.0) {Target 2};
    \end{scope}

    \draw[->, ultra thick, black] (3.1, 0) -- (4.1, 0);

    \begin{scope}[shift={(7.2,0)}]
        \begin{scope}[shift={(-0.6, 0.25)}]
            \shade[inner color=densityColor, outer color=white] 
                plot [smooth cycle, tension=0.8] coordinates {
                    (-0.3,0.3) (-0.9,0.7) (-1.5,0.5) (-1.2,0.1) (-0.6,0.05)
                };
        \end{scope}
            
        \begin{scope}[shift={(0.6, -0.25)}]
            \shade[inner color=densityColor, outer color=white] 
                plot [smooth cycle, tension=0.8] coordinates {
                    (0.3,-0.3) (0.9,-0.7) (1.5,-0.5) (1.2,-0.1) (0.6,-0.05)
                };
        \end{scope}

        \node[blue!80!black] at (2.5, -0.3) {$\mu(T)$};

        \fill[draw=leaderColor,thick,fill=white] (-1.5, -0.7) circle (2.5pt) node[below, leaderColor] {};
        \fill[draw=leaderColor,thick,fill=white] (1.5, 0.7) circle (2.5pt) node[above, leaderColor] {};
        
        \draw[leaderColor, thick, dashed, opacity=1.0, ->] (-1.5, -0.7) .. controls (-0.8, -0.3) and (-0.6, 0.3) .. (-1.0, 0.6);
        \fill[leaderColor] (-1.0, 0.6) circle (2.5pt) node[right=4.5pt, leaderColor] {$y_1(T)$};
        
        \draw[leaderColor, thick, dashed, opacity=1.0, ->] (1.5, 0.7) .. controls (0.8, 0.3) and (0.6, -0.3) .. (1.0, -0.6);
        \fill[leaderColor] (1.0, -0.6) circle (2.5pt) node[left=3.5pt, leaderColor] {$y_2(T)$};

        \node[draw, targetColor, fill=targetColor, diamond, inner sep=1.8pt] at (-1.8, 1.0) {};
        \node[above=2pt] at (-1.8, 1.0) {Target 1};
        
        \node[draw, targetColor, fill=targetColor, diamond, inner sep=1.8pt] at (1.8, -1.0) {};
        \node[below=2pt] at (1.8, -1.0) {Target 2};
    \end{scope}

\end{tikzpicture}
        \vspace{-2ex}
        \caption{As an example of our problem setting, consider two controlled agents (red circles) that ``split'' a distribution into two of equal mass located at two target locations.}
        \label{fig: setting}
        \vspace{-2ex}
\end{figure}
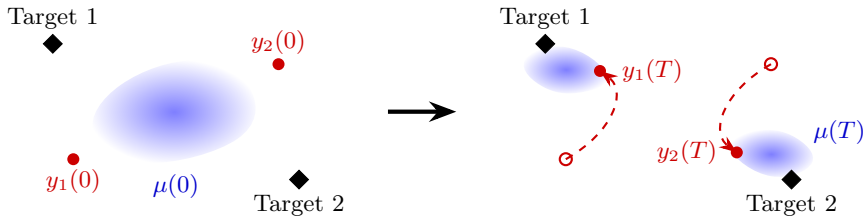

\subsection{Main contributions}

In this paper, we perform the first-order variational analysis directly in the Wasserstein space $\prob_2(\R^d)$ and derive a \emph{smooth} Pontryagin Maximum Principle for the optimal control problem~\eqref{eq:control_problem}. Under standard regularity assumptions on the dynamics, the only differentiability requirement on the terminal cost is \emph{Wasserstein differentiability at the optimal terminal measure}. This yields both a characterization of optimal controls and an explicit adjoint-based formula for the gradient of the cost functional, which we exploit numerically.

More precisely, our contribution is twofold.

\vspace{0.15cm}

\begin{mdframed}[hidealllines=true,backgroundcolor=blue!5]
\textbf{C1.} We derive, in \cref{thrm: PMP}, a \gls{acr:pmp} for the optimal control problem~\eqref{eq:control_problem} in the Wasserstein space.
Our analysis requires only \emph{Wasserstein differentiability of the terminal cost} at the optimal terminal measure (and no additional regularity such as continuity of gradient representatives or barycentric selections). This is the natural analogue of the finite-dimensional smooth \gls{acr:pmp} hypothesis, which assumes differentiability of the terminal cost at the optimal terminal state and nothing more. To the best of our knowledge, this is the first smooth \gls{acr:pmp} in the Wasserstein space that does not require additional regularity conditions on the terminal cost. In particular, our result covers terminal objectives of distribution steering type at the continuum level, including $\psi(\mu)=\frac{1}{2} W_2(\mu,\hat\mu)^2$. Additionally, in \Cref{sec:mean field splitting}, we revisit consistency with particle approximations, including empirical initial measures.
\end{mdframed}

\vspace{0.15cm}

To address the difficulties posed by optimal transport terminal costs, in \Cref{sec:differentiation}, we perform the sensitivity analysis directly at the level of measures and work with the linearized PDE--ODE dynamics in the natural Hilbert setting $L^2(\R^d,\R^d;\mu_0)\times\R^c$ rather than using spaces of continuous functions.
This analysis yields a well-posed adjoint system, which we use to derive, in \Cref{section: optimal control}, a smooth \gls{acr:pmp} under suitable differentiability assumptions on the dynamics and on the terminal cost.
We conduct our analysis within the formalism of Wasserstein calculus. In distribution steering, meaningful perturbations of the state are precisely those that transport mass, and the Wasserstein geometry is designed to encode such transport perturbations; in particular, the perturbations induced by classical needle-like variations of the control are of this form.
While our exposition emphasizes the sparse-control structure, the same sensitivity/adjoint arguments extend to general mean-field optimal control problems in the Wasserstein space, where the control may enter both the transport field and the finite-dimensional drift.

\vspace{0.15cm}

\begin{mdframed}[hidealllines=true,backgroundcolor=blue!5]
\textbf{C2.}
In \cref{thrm: PMP} we obtain an adjoint-based gradient formula for the cost functional with respect to the control which we use to implement a gradient-based algorithm to compute (locally) optimal controls. 
We showcase it on a \emph{distribution splitting} objective, where the goal is to split a distribution into several clusters. This problem is challenging in the finite-agent formulation, since multi-target costs naturally involve an (optimal) assignment/partition of agents, yielding a non-smooth terminal criterion.
By encoding the target through a Wasserstein terminal cost, the assignment is handled implicitly and the resulting continuum objective fits our differentiability framework.
We illustrate numerically our method on a two-dimensional distribution splitting task where an initial population must be steered toward two equal-mass clusters. 
\end{mdframed}

\vspace{0.15cm}

To compute gradients, we need to ensure compatibility between standard G\^ateaux derivatives and Wasserstein differentials. 
Our derivation combines classical $L^2$ calculus with Wasserstein calculus and leads to a three-step procedure: a forward solve of the coupled PDE--ODE system, the computation of a Wasserstein gradient of the terminal cost via an optimal transport problem, and a backward solve of the adjoint PDE--ODE system.
We numerically implement all three steps by discretizing time and space, resulting in a computational cost that depends on the chosen resolution  and the dimension of underlying space, and not on the number of (non-controllable) agents in an underlying particle interpretation, which we use for our two-dimensional distribution splitting in \Cref{subsec:case study}.

\subsection{Related work}
Our problem setting relates to the literature on mean-field (sparse) optimal control and optimal control of non-local continuity equations. 

The sparse mean-field control paradigm, where one acts on a distinguished subset of agents while the remaining population is described through a mean-field limit, was introduced in~\cite{fornasier2014mean} and develops earlier sparse-control ideas for alignment models such as Cucker--Smale~\cite{caponigro2013sparse}.
Building on this line, the authors of~\cite{mfoc} combine a $\Gamma$-convergence argument with a mean-field limit of the adjoint system to derive a Pontryagin-type principle for sparse mean-field dynamics of the form~\eqref{eq:system_dynamics}.
Their analysis is primarily tailored to sufficiently regular running costs and relies on a limiting procedure from finite-dimensional problems, hence yielding optimality conditions naturally stated for controls arising as limits of particle-level optimal controls. Moreover, as discussed above, optimal-transport terminal costs do not directly fit this smooth particle-level route, since the induced finite-dimensional objective obtained by plugging empirical measures is typically non-smooth. By contrast, our approach is formulated directly at the level of continuity equations and provides first-order sensitivity and Pontryagin-type conditions for terminal costs under weak differentiability assumptions compatible with Wasserstein objectives.
A related viewpoint is developed in~\cite{burger2021mean}, where the leaders are represented through continuous trajectories, and the optimality system is obtained via a KKT formulation; see also~\cite{burger2016controlling} for numerical illustrations.
Compared to~\cite{burger2021mean}, we work with an explicit controlled drift in the coupled PDE--ODE dynamics~\eqref{eq:system_dynamics} (rather than taking the leaders' trajectory as the control variable) and focus on terminal costs of low regularity, including Wasserstein distances. For the non-sparse optimal control of non-local continuity equations, we also mention~\cite{chertovskih2023optimal}, where, among other assumptions, the terminal cost is required to admit a locally Lipschitz derivative, a condition that typically fails for optimal transport cost.

Wasserstein costs also appear in the context of pattern formation.
For instance, in~\cite{kreusser2021mean} the Wasserstein distance is used to promote convergence toward a prescribed stationary distribution, leading to a problem formulation over long time horizons and with time-independent controls.
Our focus is instead on finite-horizon (sparse) steering with terminal objectives and on first-order optimality conditions that remain applicable under minimal regularity assumptions on the terminal cost. 

From a technical standpoint, our analysis builds upon the \gls{acr:pmp} in the Wasserstein space pioneered by Bonnet~\cite{bonnet2019pontryagin, nocws, pmpws} for optimal control of continuity equations, which studies optimality conditions using the formalism of subdifferential calculus in Wasserstein spaces~\cite{ambrosiogradientflows,lanzetti,lanzetti2024variational}.
Compared to~\cite{pmpws,nocws,bonnet2019pontryagin}, we (i) incorporate a coupled PDE--ODE structure motivated by sparse leader--follower models, (ii) relax the terminal-cost assumptions to Wasserstein differentiability at the optimal terminal measure, without requiring continuity of gradient representatives or barycentric minimal selections (thus covering squared Wasserstein distances), and (iii) provide an adjoint-based gradient method together with numerical demonstrations.

Complementary to the \gls{acr:pmp} approach, mean-field type control problems can be studied through the dynamic programming principle, leading to Hamilton--Jacobi--Bellman equations on $\prob_2(\R^d)$.
This line of work relies on viscosity solution theories in Wasserstein spaces; see, e.g., \cite{gangbo2008hamilton,pham2018bellman, aussedat2024viscosity, bertucci2024stochastic}.
Within the same dynamic-programming paradigm, \cite{terpin2024dynamic} treats a discrete-time multi-agent problem with individually actuated, uncoupled dynamics and optimal transport costs, proving a separation principle.

\subsection{Notation}
We denote by $\prob(\R^n)$ the set of probability measures over $\R^n$, by $\probc(\R^n)$ the set of $\mu\in\prob(\R^n)$ whose support, $\supp\mu$, is compact, and by
$\prob_2(\R^n)\coloneqq\{\mu\in\prob(\R^n)\colon \int_{\R^n}\norm{x}^2\dd\mu(x) <+\infty\}$
the set of probability measures over $\R^n$ with finite second moment.
For $\mu\in\prob(\R^n)$, we denote by $L^2(\mathbb R^n,\mathbb R^k;\mu)$ and $L^\infty(\mathbb R^n,\mathbb R^k;\mu)$
the spaces of (Borel) measurable functions $f:\mathbb R^n\to\mathbb R^k$, identified up to
$\mu$-a.e.\ equality, such that $\|f\|_{L^2(\mathbb R^n,\mathbb R^k;\mu)}
\coloneqq\bigl(\int_{\mathbb R^n}\|f(x)\|^2\,\dd\mu(x)\bigr)^{1/2}<\infty$ and
$\|f\|_{L^\infty(\mathbb R^n,\mathbb R^k;\mu)}
\coloneqq\operatorname*{ess\,sup}^{\mu}_{x\in\mathbb R^n}\|f(x)\|<\infty$.
We define the \emph{pushforward} of $\mu$ through $f$ as the probability measure $f_\#\mu(C)\coloneqq \mu \bigl(f^{-1}(C)\bigr)$ for every Borel set $C\subseteq \R^n$, and the convolution of $f$ and $\mu$ as the function $f\ast\mu\colon \R^n\to\R^n$ such that
$f\ast\mu(x)\coloneqq \int_{\R^n}f(y-x)\d\mu(y)$
If $X$ is a Banach space, $\mathcal L (X)$ denotes the space of linear and continuous operators from $X$ to $X$.
Finally, we use boldface symbols to refer to functions of time, such as solutions of differential equations and controls.

\subsection{Background material}

We briefly recall background material in optimal transport and differentiability in the Wasserstein space. The material is based on \cite[Part 2]{ambrosiogradientflows} and \cite[Section 2]{lanzetti}.

\paragraph{Wasserstein distance}
The $2$-\emph{Wasserstein distance} between $\mu,\nu\in\prob_2(\R^n)$ is defined as
\begin{equation}\label{eqn: W_2}
    W_2(\mu,\nu)
    \coloneqq
    \inf_{\gamma\in\Gamma(\mu,\nu)}\left(\int_{\R^n\times\R^n}\norm{x-y}^2\d\gamma(x,y)\right)^{\frac{1}{2}},
\end{equation}
where $\Gamma(\mu,\nu)$ is the set of couplings between $\mu$ and $\nu$.
A \emph{coupling} between $\mu$ and $\nu$ is a probability measure $\gamma\in\prob(\R^n\times\R^n)$ such that $\pi^1_\#\gamma=\mu$ and $\pi^2_\#\gamma=\nu$, where $\pi^1,\pi^2\colon\R^n\times\R^n\to\R^n$ are the usual projections over the first and second component, respectively.
We will denote by $\Gamma^o(\mu,\nu)$ the set of such optimal couplings, which is always non-empty as the infimum in \eqref{eqn: W_2} is attained for all $\mu,\nu\in\prob_2(\R^n)$. If $\mu$ is absolutely continuous, then the optimal coupling in~\eqref{eqn: W_2} is unique and induced by a unique optimal transport map $\mathcal T:\R^n\to\R^n$, so that $\gamma=(\Id\times \mathcal T)_\#\mu$ where $\Id$ is the identity map. 
It is well known that the Wasserstein distance is indeed a distance on $\prob_2(\R^n)$ and $\bigl(\prob_2(\R^n), W_2\bigr)$ is a complete metric space, also called \emph{Wasserstein space}. 

\paragraph{Differentiable structure of $\prob_2(\R^n)$}
The \emph{tangent space} at $\mu$ is defined as 
\begin{equation}\label{eq:tangent_space}
    \mathrm{Tan}_\mu\prob_2(\R^n)\coloneqq \overline{\{\nabla\psi\colon \psi\in C_c^\infty(\R^n)\}}^{L^2(\R^n,\R^n;\mu)},
\end{equation}
where $C_c^\infty(\R^n)$ is the space of real-valued smooth functions with compact support. 
With this, Wasserstein differentiability is defined as follows:
\begin{definition}[Wasserstein differentiability]\label{definition: Wasserstein differentiability}
We say that $\varphi\colon \prob_2(\R^n)\to\R$ is \emph{Wasserstein differentiable} at $\mu$ if there exists $\xi\in \mathrm{Tan}_\mu\prob_2(\R^n)$ such that
\begin{equation}\label{eqn: Wasserstein gradients}
    \varphi(\nu)=\varphi(\mu)+\int_{\R^n\times\R^n} \langle \xi(x),y-x\rangle \dd\gamma(x,y)+o\bigl(W_2(\mu,\nu)\bigr)
\end{equation}
for every $\nu\in\prob_2(\R^n)$ and every optimal coupling $\gamma\in\Gamma^o(\mu,\nu)$. It can be shown that such $\xi$ is unique in $\mathrm{Tan}_\mu\prob_2(\R^n)$.
We denote it as $D_\mu\varphi[\mu]$ and we call it \emph{Wasserstein gradient} of $\varphi$ in $\mu$.
We say that $\varphi\colon \prob_2(\R^n)\to\R^k$ is Wasserstein differentiable at $\mu$ if so are its components
$\varphi_1,\ldots,\varphi_k$. In this case, we set 
\begin{equation*}
    D_\mu\varphi[\mu](x)
    \coloneqq 
    \bigl(D_\mu\varphi_1[\mu](x),\ldots, D_\mu\varphi_k[\mu](x)\bigr)^\top
    \in\R^{k\times n},
\end{equation*}
so that $D_\mu\varphi[\mu]\in L^2(\R^n,\R^{k\times n};\mu)$, and it acts on displacements by the usual
matrix-vector product.
\end{definition}
For $F\colon \prob_2(\R^n)\times\R^m\to\R^k $ and $z\in\R^m$ such that the map $\nu\mapsto F[\nu](z)$ is Wasserstein differentiable at $\mu$, we denote its Wasserstein gradient by $x\mapsto D_\mu F[\mu]_z(x)$.
Moreover, Wasserstein gradients are ``strong'': we have an analog of \eqref{eqn: Wasserstein gradients} even for non-optimal couplings.

\begin{proposition}\label{proposition: Wasserstein gradients are strong}
If $\varphi\colon \prob_2(\R^n)\to\R$ is Wasserstein differentiable at $\mu$ then for every $\nu$ we have that
\begin{equation*}
    \varphi(\nu)=\varphi(\mu)+\int_{\R^n\times\R^n}\langle D_\mu\varphi[\mu](x),y-x\rangle \dd\gamma(x,y)+o\Biggl(\biggl(\int_{\R^n\times\R^n}\norm{x-y}^2\dd\gamma(x,y)\biggr)^{\frac{1}{2}}\Biggr)
\end{equation*}
for every $\gamma\in\Gamma(\mu,\nu)$.
\end{proposition}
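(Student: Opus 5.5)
The idea is to reduce the statement for an arbitrary coupling to the defining expansion \eqref{eqn: Wasserstein gradients} for an optimal coupling. The key point is that the first-order term $\int\langle\xi(x),y-x\rangle\dd\gamma(x,y)$ is, to first order, \emph{independent of the chosen coupling} whenever $\xi$ lies in the tangent space $\mathrm{Tan}_\mu\prob_2(\R^n)$ from \eqref{eq:tangent_space}.

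Fix $\nu$ and $\gamma\in\Gamma(\mu,\nu)$, and write $c(\gamma)\coloneqq\bigl(\int\norm{x-y}^2\dd\gamma\bigr)^{1/2}$. Also pick $\gamma_o\in\Gamma^o(\mu,\nu)$, so that $c(\gamma_o)=W_2(\mu,\nu)\le c(\gamma)$, and set $\xi=D_\mu\varphi[\mu]$. By Definition~\ref{definition: Wasserstein differentiability},
\[
\varphi(\nu)-\varphi(\mu)-\int\langle\xi(x),y-x\rangle\dd\gamma_o=o\bigl(W_2(\mu,\nu)\bigr)=o\bigl(c(\gamma)\bigr).
\]
Precisely, given $\varepsilon>0$ there is $r>0$ such that the left-hand side is at most $\varepsilon W_2(\mu,\nu)$ whenever $W_2(\mu,\nu)<r$, and $W_2\le c(\gamma)<r$ suffices. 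It therefore remains to show
\[
\Delta(\gamma)\coloneqq\int\langle\xi(x),y-x\rangle\dd\gamma-\int\langle\xi(x),y-x\rangle\dd\gamma_o=o\bigl(c(\gamma)\bigr)
\]
as $c(\gamma)\to0$, uniformly over $\nu$ and over couplings with small cost.

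\emph{Step 1 (smooth gradients).} For $\psi\in C_c^\infty(\R^n)$, Taylor's formula gives
\[
\psi(y)-\psi(x)=\langle\nabla\psi(x),y-x\rangle+R(x,y),\qquad |R(x,y)|\le\tfrac12\|D^2\psi\|_\infty\norm{y-x}^2 .
\]
Integrating against any coupling $\eta\in\Gamma(\mu,\nu)$ yields
\[
\int\psi\,\dd\nu-\int\psi\,\dd\mu=\int\langle\nabla\psi(x),y-x\rangle\dd\eta+O\bigl(\|D^2\psi\|_\infty c(\eta)^2\bigr).
\]
The left-hand side does not depend on $\eta$. Applying this with $\eta=\gamma$ and with $\eta=\gamma_o$ and subtracting, the quantity $\Delta(\gamma)$ computed with $\nabla\psi$ in place of $\xi$ is bounded by $\|D^2\psi\|_\infty c(\gamma)^2$.

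\emph{Step 2 (density).} Since $\xi\in\mathrm{Tan}_\mu\prob_2(\R^n)$, for every $\delta>0$ there is $\psi_\delta\in C_c^\infty$ with $\|\xi-\nabla\psi_\delta\|_{L^2(\mu)}\le\delta$. Because the first marginal of both $\gamma$ and $\gamma_o$ is $\mu$, Cauchy--Schwarz gives
\[
\Bigl|\int\langle\xi-\nabla\psi_\delta,y-x\rangle\dd\eta\Bigr|\le\delta\, c(\eta)\le\delta\, c(\gamma)\qquad\text{for }\eta\in\{\gamma,\gamma_o\}.
\]
Combining this with Step 1,
\[
|\Delta(\gamma)|\le 2\delta\,c(\gamma)+\|D^2\psi_\delta\|_\infty c(\gamma)^2 .
\]
Hence $\limsup_{c(\gamma)\to0}|\Delta(\gamma)|/c(\gamma)\le2\delta$, and since $\delta$ is arbitrary, $\Delta(\gamma)=o(c(\gamma))$. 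This proves the proposition.

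The main obstacle is Step 1 together with the density argument. For a general $L^2(\mu)$ field the first-order term genuinely depends on the coupling; it is exactly the tangent-space constraint that makes the first-order term coupling-insensitive. The only delicate point is that the constant $\|D^2\psi_\delta\|_\infty$ may blow up as $\delta\to0$. This is harmless because we first send $c(\gamma)\to0$ at fixed $\delta$ and only afterwards let $\delta\to0$.
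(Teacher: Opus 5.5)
Your proof is correct and complete. Three pieces combine to give a remainder that is $o(c(\gamma))$ uniformly in $\nu$ and $\gamma$:
\begin{itemize}
\item the reduction to an optimal coupling via $W_2(\mu,\nu)\le c(\gamma)$;
\item the Taylor estimate, which shows that for $\xi=\nabla\psi$ the first-order term changes by at most $\|D^2\psi\|_\infty c(\gamma)^2$ between couplings;
\item the $L^2(\mu)$-density step, with $c(\gamma)\to 0$ taken before $\delta\to 0$.
\end{itemize}

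The paper does not prove this proposition itself; it imports it as background from \cite{lanzetti}, building on \cite{ambrosiogradientflows}. Your argument (reduce to the defining expansion, then show that tangent vectors make the first-order term coupling-insensitive up to $o(c(\gamma))$) is the standard proof of this fact. So there is no in-paper route to compare against; your write-up supplies the proof the paper defers.
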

To conclude, we present two important differentiable functionals: expected values and the Wasserstein distance. A proof of these statements can be found in~\cite[§2]{lanzetti}.

\begin{proposition}\label{proposition: differentiability of functionals}
\,
\begin{enumerate}
\item 
If $\hat\psi\in C^1(\R^d)$ and has at most quadratic growth (i.e., $\psi(x)\leq C(1+\norm{x}^2)$ for some $C>0$) then the map $\mu\mapsto \int_{\R^d}\hat\psi(x)\d\mu(x)$ is Wasserstein differentiable at all $\mu\in\prob_2(\R^d)$ with Wasserstein gradient $\nabla_x\hat\psi$.

\item 
Let $\hat\mu\in\prob_2(\R^d)$ be a reference probability measure. Then, if for $\mu\in\prob_2(\R^d)$ there is a unique optimal transport map $\mathcal T_{\mu}^{\hat\mu}$ from $\mu$ to $\hat\mu$, the map $\mu\mapsto\frac{1}{2}W_2(\mu,\hat\mu)^2$ is Wasserstein differentiable at $\mu$, and its Wasserstein gradient is $\Id-\mathcal T_{\mu}^{\hat\mu}$. This is in particular always true when $\mu$ is absolutely continuous with respect to the Lebesgue measure.
\end{enumerate}
\end{proposition}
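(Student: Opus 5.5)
The last statement is Proposition~\ref{proposition: differentiability of functionals}, with two parts: the expected-value functional and the squared Wasserstein distance. In both parts I will verify \eqref{eqn: Wasserstein gradients} directly, for an arbitrary $\nu$ and an arbitrary optimal coupling $\gamma\in\Gamma^o(\mu,\nu)$. I will then check that the candidate gradient lies in $\mathrm{Tan}_\mu\prob_2(\R^d)$.

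\textbf{Part 1.} Fix $\nu$ and $\gamma\in\Gamma^o(\mu,\nu)$, so that $\int\hat\psi\,\d\nu-\int\hat\psi\,\d\mu=\int\bigl(\hat\psi(y)-\hat\psi(x)\bigr)\d\gamma(x,y)$. By the fundamental theorem of calculus,
\begin{equation*}
\hat\psi(y)-\hat\psi(x)-\langle\nabla\hat\psi(x),y-x\rangle=\int_0^1\langle\nabla\hat\psi(x+s(y-x))-\nabla\hat\psi(x),y-x\rangle\,\d s .
\end{equation*}
The first step is to bound the remainder by Cauchy--Schwarz, which gives
\begin{equation*}
\int_0^1\!\!\int\bigl\|\nabla\hat\psi(x+s(y-x))-\nabla\hat\psi(x)\bigr\|^2\d\gamma\,\d s
\end{equation*}
to the power $\tfrac12$, times $W_2(\mu,\nu)$. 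It then suffices to show that this factor tends to $0$ as $W_2(\mu,\nu)\to0$.

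I will need the growth assumption to give $\|\nabla\hat\psi(x)\|\le C(1+\|x\|)$, which is the standard hypothesis in \cite{lanzetti}, and I will assume it here. With that bound, I will use uniform integrability of $\|x\|^2$ under $\gamma$: its marginals converge in $W_2$, so their second moments are uniformly integrable. The argument is to truncate to a large ball, where $\nabla\hat\psi$ is uniformly continuous. Inside the ball, $\gamma(\|x-y\|>\delta)\le W_2^2/\delta^2\to0$, and the region $\|x-y\|\le\delta$ contributes a small uniform-continuity error. Outside the ball, the tails are controlled by the linear growth. Finally, $\nabla\hat\psi$ lies in the tangent space by approximation with $\nabla(\chi_R\hat\psi)$, where $\chi_R$ is a cutoff, followed by mollification; dominated convergence gives convergence in $L^2(\mu)$.

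\textbf{Part 2.} Let $\mathcal T=\mathcal T_\mu^{\hat\mu}$ and $\gamma\in\Gamma^o(\mu,\nu)$. Gluing $\gamma$ with $(\Id\times\mathcal T)_\#\mu$ along $\mu$ produces a coupling of $\nu$ and $\hat\mu$. This yields the \emph{upper bound}
\begin{equation*}
\tfrac12W_2(\nu,\hat\mu)^2\le\tfrac12\int\|y-\mathcal T(x)\|^2\d\gamma=\tfrac12W_2(\mu,\hat\mu)^2+\int\langle x-\mathcal T(x),y-x\rangle\d\gamma+\tfrac12W_2(\mu,\nu)^2 .
\end{equation*}
For the \emph{lower bound}, I will take optimal plans $\eta_\nu\in\Gamma^o(\nu,\hat\mu)$ and glue them with $\gamma$, obtaining $\sigma$ on triples $(x,y,z)$. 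Then
\begin{equation*}
\tfrac12W_2(\mu,\hat\mu)^2\le\tfrac12\int\|x-z\|^2\d\sigma=\tfrac12W_2(\nu,\hat\mu)^2+\int\langle y-z,x-y\rangle\d\sigma+\tfrac12W_2(\mu,\nu)^2 .
\end{equation*}
The remaining task is to show that $\int\langle y-z,x-y\rangle\d\sigma=-\int\langle x-\mathcal T(x),y-x\rangle\d\gamma+o(W_2(\mu,\nu))$.

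\textbf{Main obstacle.} The main obstacle is the stability step: the $(x,z)$-marginal of $\sigma$ must converge to $(\Id\times\mathcal T)_\#\mu$ strongly enough that the mismatch in the $x$-component, of order $\bigl\|\int\|z-\mathcal T(x)\|^2\d\sigma\bigr\|^{1/2}$, tends to $0$. I plan to argue by contradiction along a sequence $\nu_n\to\mu$. Up to subsequences, the plans $\sigma_n$ converge in $W_2$, and by stability of optimal plans the limit is optimal between $\mu$ and $\hat\mu$. By uniqueness the limit is $(\Id\times\mathcal T)_\#\mu$, and the convergence of second moments upgrades this to $\int\|z-\mathcal T(x)\|^2\d\sigma_n\to0$; the latter uses an approximation of $\mathcal T$ by continuous maps in $L^2(\mu)$, which is the standard Lusin trick. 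Cauchy--Schwarz then bounds the mismatch by $o(1)\,W_2(\mu,\nu_n)$, which gives the claim.

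Tangency of $\Id-\mathcal T$ uses that $\mathcal T$ is the gradient of a convex function, so $\Id-\mathcal T=\nabla\bigl(\tfrac12\|x\|^2-\phi\bigr)$, together with the characterization of the tangent space in \cite{ambrosiogradientflows}. Absolute continuity of $\mu$ gives uniqueness of $\mathcal T$ by Brenier's theorem.
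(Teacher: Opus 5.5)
The paper gives no argument of its own for this proposition and simply defers to \cite[Section~2]{lanzetti}, so your proposal is a genuinely different, self-contained route. In Part~1 you verify the definition directly by a Taylor expansion with truncation and uniform integrability of second moments. In Part~2 you use gluing for the upper bound and a compactness/stability argument for the lower bound. The argument is correct, and its main benefit over the citation is that it shows exactly which hypotheses are used. As a by-product, your upper bound shows that $\Id-\mathcal T$ is a supergradient of $\frac12W_2(\cdot,\hat\mu)^2$ at $\mu$ as soon as \emph{some} optimal map exists, so uniqueness is needed only for the lower bound. That same transparency exposes two hypotheses that you should state openly rather than absorb.

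\textbf{Part~1.} The bound $\|\nabla\hat\psi(x)\|\le C(1+\|x\|)$ that you add does not follow from quadratic growth of $\hat\psi$, and it is genuinely needed. For example, $\hat\psi(x)=\sin(\|x\|^3)$ is bounded and $C^1$, yet $\nabla\hat\psi\notin L^2(\mu)$ for suitable $\mu\in\prob_2(\R^d)$, so it cannot be a Wasserstein gradient there.

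\textbf{Part~2, uniqueness.} The step ``by uniqueness the limit is $(\Id\times\mathcal T)_\#\mu$'' requires uniqueness of the optimal \emph{plan}, not just of the optimal map, and the weaker reading is false. In $\R^2$, take $\mu=\frac12\delta_{(0,1)}+\frac14\delta_{(0,-1)}+\frac14\delta_{(-1,0)}$ and $\hat\mu=\frac12\delta_{(1,0)}+\frac12\delta_{(-1,0)}$.
\begin{itemize}
\item The only optimal map sends $(0,1)\mapsto(1,0)$ and $(0,-1),(-1,0)\mapsto(-1,0)$.
\item Plans that split the atom at $(0,1)$ are also optimal.
\item Moving the atom at $(0,-1)$ to $(\varepsilon,-1)$ gives a measure $\nu_\varepsilon$ with $W_2(\mu,\nu_\varepsilon)=|\varepsilon|/2$ and $\frac12W_2(\nu_\varepsilon,\hat\mu)^2=\frac34-\frac14|\varepsilon|+O(\varepsilon^2)$, which has a kink at $\varepsilon=0$.
\end{itemize}
So state the hypothesis as ``the optimal plan is unique and induced by $\mathcal T$''. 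Brenier's theorem gives exactly this for absolutely continuous $\mu$, so the final clause is unaffected.

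\textbf{Part~2, stability step.} With that reading your stability argument works, but make one step explicit. The limit of $\sigma_n$ is concentrated on $\{x=y\}$, because $\int\|x-y\|^2\,\mathrm{d}\sigma_n=W_2(\mu,\nu_n)^2\to0$. This is what transfers optimality from the $(y,z)$-marginals to the $(x,z)$-marginal.

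\textbf{Tangency.} For $\mu$ not absolutely continuous, the convex potential need not be differentiable $\mu$-a.e., so writing $\Id-\mathcal T$ as a gradient is not literally available. Instead, invoke directly the result in \cite{ambrosiogradientflows} that $\mathcal T-\Id\in\mathrm{Tan}_\mu\prob_2(\R^d)$ whenever $(\Id\times\mathcal T)_\#\mu$ is optimal.
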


\section{The controlled dynamics: well-posedness and continuous dependence}

In this section, we introduce the coupled PDE--ODE system that will serve as the
state equation for our optimal control problem and collect its basic
well-posedness and stability properties.
We consider the control system
\begin{equation}\label{eqn: coupled PDE-ODE system}
\begin{cases}
\partial_t \mu(t) + \nabla_x \cdot \bigl(F[\mu(t)](\cdot,y(t))\,\mu(t)\bigr) = 0,\\[2pt]
\dot y(t) = G\bigl[\mu(t)\bigr](y(t))+\control(t),
\end{cases}
\end{equation}
where $\mu(t)\in\probc(\R^d)$,
$y(t)\in\R^c$, and $\control\in\U \coloneqq L^2([0,T],U)$ is a control law that takes values in a fixed compact and convex set $U\subset\R^c$. 

\subsection{Well-posedness of the controlled system}

We first introduce the notion of weak measure solution for the coupled
PDE--ODE system~\eqref{eqn: coupled PDE-ODE system}.

\begin{definition}[Weak measure solution]\label{def: solution}
We say that a pair $(\bfmu,\bfy):[0,T]\to\probc(\R^d)\times\R^c$ is a weak measure
solution of~\eqref{eqn: coupled PDE-ODE system} if
\begin{enumerate}
  \item $\bfmu$ is equi-compactly supported, i.e., there exists $R>0$ such that
        $\supp\bfmu(t)\subset B(0,R)$ for all $t\in[0,T]$;
  \item for every test function $\varphi\in C_c^\infty((0,T)\times \R^d)$ it holds that
  \begin{equation}\label{eqn: weak formulation}
  \int_0^T\int_{\R^d}
  \bigl(\partial_t\varphi(t,x)
        + \nabla_x\varphi(t,x)\cdot F[\bfmu(t)](x,\bfy(t))\bigr)\dd(\bfmu(t))(x)\dd t
  = 0;
  \end{equation}
  \item $\bfy$ is a Carathéodory solution of $\dot y(t)=G[\bfmu(t)](y(t))+\control(t)$.
\end{enumerate}
A Cauchy problem for~\eqref{eqn: coupled PDE-ODE system} is specified by an initial
time $t_0\in[0,T]$, an initial probability measure $\mu_0\in\probc(\R^d)$, and an
initial controlled state $y_0\in\R^c$. We say that $(\bfmu,\bfy)$ is a solution of
the Cauchy problem with data $(t_0,\mu_0,y_0,\control)$ if it is a weak measure solution in the sense above and satisfies $\bfmu(t_0)=\mu_0$ and $
\bfy(t_0)=y_0$.
\end{definition}

The previous definition is Eulerian in nature: the continuity equation is
formulated in terms of the time evolution of the measure-valued curve
$t\mapsto\mu(t)$ on $\probc(\R^d)$. Under suitable regularity assumptions on the
vector field $F$, this Eulerian description can be complemented with a
Lagrangian one in terms of a \emph{flow} of characteristics, as we will make
precise below. To do so, we start with regularity assumptions on $F$ and $G$ that will be used throughout the paper.

\noindent\emph{Lipschitz continuity.} There exists $\mathbf{L}>0$ such that for all $t\in[0,T]$, all
$\mu_1,\mu_2\in\probc(\R^d)$, $x_1,x_2\in\R^d$, and $y_1,y_2\in\R^c$,
\begin{equation}\label{eqn: lipschitz continuity}\tag{\bf LIP}
    \begin{split}
        \norm{F[\mu_1](x_1,y_1)-F[\mu_2](x_2,y_2)}
        &\leq \mathbf{L}\bigl(\norm{x_1-x_2}+\mathfrak{d}\left((\mu_1,y_1),(\mu_2,y_2)\bigr)\right),\\[2pt]
        \norm{G[\mu_1](y_1)-G[\mu_2](y_2)}
        &\leq \mathbf{L} \mathfrak{d}\left((\mu_1,y_1),(\mu_2,y_2)\right),
    \end{split}
\end{equation}
where $\mathfrak{d}\bigl((\mu_1,y_1),(\mu_2,y_2)\bigr)\coloneqq W_2(\mu_1,\mu_2)+\norm{y_1-y_2}$ is the product metric on $\prob_2(\R^d)\times \R^c$.
\noindent\emph{Sublinear growth.} There exists $\mathbf{C}>0$ such that for all $t\in[0,T]$, all $\mu\in\probc(\R^d)$,
$x\in\R^d$, and $y\in\R^c$,
\begin{equation}\label{eqn: sublinear growth}\tag{\bf SLG}
    \begin{split}
        \norm{F[\mu](x,y)}
        &\leq \mathbf{C}\bigl(1+\norm{x}+\norm{y}+m_\infty(\mu)\bigr),\\[2pt]
        \norm{G[\mu](y)}
        &\leq \mathbf{C}\bigl(1+\norm{y}+m_\infty(\mu)\bigr),
    \end{split}
\end{equation}
where $m_\infty(\mu)$ is the (finite) radius of the support of $\mu$, i.e., $m_\infty(\mu)\coloneqq \sup\bigl\{\norm{x}\colon x\in\supp\mu\bigr\}$.

The following theorem provides existence, uniqueness, and a Lagrangian
representation of solutions of~\eqref{eqn: coupled PDE-ODE system}.
These fundamental properties will be used repeatedly throughout the paper.

\begin{theorem}[Well-posedness and flow representation]\label{thrm: well posedness}
Assume that~\eqref{eqn: lipschitz continuity} and~\eqref{eqn: sublinear growth} hold. Then, for every control law
$\control\in\U$, initial time $t_0\in[0,T]$, initial distribution $\mu_0\in\probc(\R^d)$,
and initial controlled state $y_0\in\R^c$, there exists a unique solution, in the sense
of~\cref{def: solution}, 
\begin{equation*}
    t\longmapsto\bigl(\bfmu(t;t_0,\mu_0,y_0,\control),\,\bfy(t;t_0,\mu_0,y_0,\control)\bigr)
    \equiv 
    \bigl(\bfmu(t;\control),\,\bfy(t;\control)\bigr)
\end{equation*}
of the corresponding Cauchy problem associated with~\eqref{eqn: coupled PDE-ODE system}, where the dependence on $t_0,\mu_0,y_0$, and eventually $\control$, is omitted when clear from the context. 
Moreover, the solution enjoys the following
properties:
\begin{enumerate}
    \item[(i)] \textbf{A priori bounds.}  
    If $R>0$ is such that $m_\infty(\mu_0)\leq R$ and $\norm{y_0}\leq R$
    then, for every $\control\in\U$ and every
    $t\in[0,T]$, one has
    \begin{equation*}
          m_\infty\bigl(\bfmu(t;\control)\bigr),
    \norm{\bfy(t;\control)}\leq  \bar R ,
    \end{equation*}
    where $\bar R=\bar R(R, U, \mathbf{C}, T)>0$ depends only on $R$, $U$, $\mathbf{C}$, and $T$.
    \item[(ii)] \textbf{Lagrangian representation.}  
    For every control law $\control\in\U$, every $\mu_0\in\probc(\R^d)$, $y_0\in\R^c$, and
    starting time $t_0\in[0,T]$, there exists a (non-local) flow
    \begin{equation*}
    \bfphi(\cdot,\cdot;\control)\equiv \bfphi(\cdot,\cdot; t_0,\mu_0,y_0,\control)\colon[0,T]\times\R^d\to\R^d
    \end{equation*}
    such that $\bfphi(t_0, x;\control) = x$, for all $x\in\R^d$ and all $t\in[0,T]$
    \begin{equation*}
      \frac{\dd}{\dd t}\bfphi(t, x; \control)
    =
    F\bigl[\bfmu(t;\control)\bigr]\bigl(
        \bfphi(t, x;\control),
        \bfy(t; \control)\bigr)
    \end{equation*}
    and $\bfmu(t;\control)=\bfphi(t,\cdot;\control)_\#\mu_0$. In particular, if $\mu_0$ is absolutely continuous with respect to the Lebesgue measure, then for every $t\in[0,T]$ so is $\bfmu(t;\control)$.
\end{enumerate}
\end{theorem}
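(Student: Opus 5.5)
We will prove \cref{thrm: well posedness} by a fixed-point argument on the Lagrangian formulation, and afterwards recover the Eulerian statement. Fix $\control\in\U$, $t_0$, $\mu_0\in\probc(\R^d)$ with $m_\infty(\mu_0)\le R$, and $y_0$ with $\norm{y_0}\le R$. The first step is the a priori bound (i), derived formally from \eqref{eqn: sublinear growth}. Along any characteristic $x(t)$ and the controlled state $y(t)$, set $r(t)\coloneqq\max\{m_\infty(\bfmu(t)),\norm{\bfy(t)}\}$. Using $\norm{\control(t)}\le \max_{u\in U}\norm{u}$, we obtain
\begin{equation*}
r(t)\le R+\int_{t_0}^t \bigl(\mathbf{C}(1+3r(s))+\max_{u\in U}\norm{u}\bigr)\dd s .
\end{equation*}
Grönwall's inequality then gives $r(t)\le \bar R(R,U,\mathbf{C},T)$. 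This bound defines the closed set on which we work:
\begin{equation*}
X\coloneqq\Bigl\{(\bfmu,\bfy)\in C([0,T];\prob_2(\R^d))\times C([0,T];\R^c)\colon\ \supp\bfmu(t)\subset \overline{B}(0,\bar R),\ \norm{\bfy(t)}\le\bar R\Bigr\}.
\end{equation*}
We equip $X$ with the metric $\sup_t e^{-\lambda |t-t_0|}\,\mathfrak d\bigl((\bfmu(t),\bfy(t)),(\bfnu(t),\bfz(t))\bigr)$. This is a complete metric space, since measures with uniformly bounded support form a $W_2$-closed set.

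Next we define the map $\Gamma$ on $X$. Given $(\bfnu,\bfz)\in X$, the field $(t,x)\mapsto F[\bfnu(t)](x,\bfz(t))$ is continuous in $t$ and globally Lipschitz in $x$ with constant $\mathbf{L}$, by \eqref{eqn: lipschitz continuity}. Its classical flow $\Phi^{\bfnu,\bfz}$ therefore exists, is unique, and is Lipschitz. Similarly, the Carathéodory ODE $\dot y=G[\bfnu(t)](y)+\control(t)$ has a unique solution $\tilde\bfy$ by Lipschitz continuity and integrability of $\control$. We set
\begin{equation*}
\Gamma(\bfnu,\bfz)\coloneqq\bigl(\Phi^{\bfnu,\bfz}(t,\cdot)_\#\mu_0,\ \tilde\bfy\bigr).
\end{equation*}
The growth estimate above shows that $\Gamma$ maps $X$ into $X$.

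The contraction estimate is the main step. For two inputs, we couple the pushforwards through the same $\mu_0$. This gives
\begin{equation*}
W_2\bigl(\Phi^1(t)_\#\mu_0,\Phi^2(t)_\#\mu_0\bigr)\le \norm{\Phi^1(t)-\Phi^2(t)}_{L^2(\mu_0)}.
\end{equation*}
Differentiating $\norm{\Phi^1-\Phi^2}$ along the flow and applying \eqref{eqn: lipschitz continuity} yields
\begin{equation*}
\norm{\Phi^1(t,x)-\Phi^2(t,x)}\le \mathbf{L}\int_{t_0}^t\Bigl(\norm{\Phi^1(s,x)-\Phi^2(s,x)}+\mathfrak d(s)\Bigr)\dd s .
\end{equation*}
The same type of inequality holds for $\norm{\tilde\bfy^1-\tilde\bfy^2}$. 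By Grönwall, $\mathfrak d$ of the outputs at time $t$ is bounded by $C\int_{t_0}^t\mathfrak d(s)\,\dd s$ for the inputs. With the weight $e^{-\lambda|t-t_0|}$ and $\lambda$ large, $\Gamma$ is a contraction. Banach's theorem then provides a unique fixed point on $[0,T]$, in both time directions from $t_0$. At the fixed point, $\bfphi\coloneqq\Phi^{\bfmu,\bfy}$ satisfies the characteristic equation in (ii), and $\bfmu(t)=\bfphi(t,\cdot)_\#\mu_0$. Bound (i) holds because the fixed point lies in $X$.

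It remains to connect with \cref{def: solution}, and the uniqueness half of this is where we expect the most care to be needed. For existence, a pushforward along the flow of a field Lipschitz in $x$ satisfies the weak formulation \eqref{eqn: weak formulation}; we check this by differentiating $t\mapsto\int\varphi(t,\bfphi(t,x))\,\dd\mu_0(x)$. For uniqueness among \emph{all} weak solutions, we argue as follows. Let $(\bfmu',\bfy')$ be any equi-compactly supported weak solution. Then $\bfmu'$ solves a \emph{linear} continuity equation with the frozen field $F[\bfmu'(t)](\cdot,\bfy'(t))$, which is Lipschitz in $x$ and bounded on compacts. By the uniqueness theorem for linear continuity equations with Lipschitz fields (Ambrosio--Gigli--Savaré, Prop.~8.1.8), $\bfmu'$ is the pushforward of $\mu_0$ under that frozen flow. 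Hence $(\bfmu',\bfy')$ is a fixed point of $\Gamma$, provided it lies in $X$, which the growth argument guarantees, and uniqueness follows. Finally, for absolute continuity: $\bfphi(t,\cdot)$ is a bi-Lipschitz homeomorphism, since it is invertible by running the flow backward. Pushforwards of absolutely continuous measures under bi-Lipschitz maps remain absolutely continuous, because Lipschitz inverses map Lebesgue-null sets to null sets.
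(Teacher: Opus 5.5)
Your overall route is sound, and it differs from what the paper has in mind. The paper omits this proof and points to Carath\'eodory theory and to the non-local transport literature (Piccoli--Rossi, Fornasier--Piccoli--Rossi). There, existence typically comes from compactness (particle or Lagrangian-scheme approximations plus Ascoli--Arzel\`a, as in the paper's sketch of \cref{theorem: continuous dependence on controls}), and uniqueness from a separate $W_2$ stability estimate. Your Banach fixed point gives existence, uniqueness and the flow representation at once. The compactness route instead dovetails with the particle approximations used in \Cref{sec:mean field splitting}.

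The following parts are fine:
\begin{itemize}
\item the contraction estimate;
\item the check of \eqref{eqn: weak formulation};
\item the reduction of uniqueness to the linear continuity equation (work with the narrowly continuous representative, AGS Lemma 8.1.2, so that $\bfmu(t_0)=\mu_0$ and measurability of the frozen field make sense);
\item the bi-Lipschitz argument for absolute continuity.
\end{itemize}

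\textbf{The gap.} One step fails as written: the claim that the growth estimate gives $\Gamma(X)\subset X$ for a \emph{constant} radius $\bar R$. Your Gr\"onwall bound on $r$ is self-consistent: the same $r$ bounds both the measure driving the field and the characteristics it produces. That holds at a fixed point, but not for $\Gamma$ applied to an arbitrary input in $X$. For such an input, with $\bar u\coloneqq\max_{u\in U}\norm{u}$, you only get
\[
r_{\mathrm{out}}(t)\le R+\int_{t_0}^t\bigl(\mathbf C(1+r_{\mathrm{out}}(s)+2\bar R)+\bar u\bigr)\dd s,
\]
which does not close at $\bar R$.

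Here is a concrete failure. Take
\begin{itemize}
\item $d=c=1$, $U=\{0\}$, $G\equiv 0$;
\item $F[\mu](x,y)=\mathbf C\int z\,\dd\mu(z)$, which satisfies \eqref{eqn: lipschitz continuity} with $\mathbf L=\mathbf C$ and satisfies \eqref{eqn: sublinear growth};
\item $t_0=0$, $\mu_0=\delta_R$, $y_0=0$, and the input $\nu(t)\equiv\delta_{\bar R}$, $z\equiv 0$, which lies in your $X$.
\end{itemize}
The frozen flow is $x+\mathbf C\bar R\,t$, so the output at time $T$ is $\delta_{R+\mathbf C T\bar R}$. This leaves $\overline{B}(0,\bar R)$ whenever $\mathbf C T\ge 1$, for every choice of $\bar R$. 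So no constant-radius set of this form is invariant on long horizons, and Banach's theorem cannot be invoked on $X$.

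\textbf{The repair} is routine; either of the following works.
\begin{itemize}
\item Replace $\bar R$ by the envelope $\rho$ solving $\rho(t)=R+\int_{t_0}^t\bigl(\mathbf C(1+3\rho(s))+\bar u\bigr)\dd s$ (in $|t-t_0|$ for backward times), and require $m_\infty(\bfmu(t)),\norm{\bfy(t)}\le\rho(t)$. For an input obeying this, $e\coloneqq r_{\mathrm{out}}-\rho$ satisfies $e(t)\le\mathbf C\int_{t_0}^t e(s)\,\dd s$, hence $e\le 0$ by Gr\"onwall. The constrained set is still $W_2$-closed, and (i) holds with $\bar R$ equal to the envelope at $|t-t_0|=T$, which depends only on $R$, $U$, $\mathbf C$, $T$.
\item Run the contraction on Lagrangian variables $(\mathbf X,\bfy)\in C([0,T];C(\supp\mu_0;\R^d))\times C([0,T];\R^c)$, with $\bfmu(t)=\mathbf X(t)_\#\mu_0$. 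Since \eqref{eqn: lipschitz continuity} is global, $\Gamma$ is then a self-map of a Banach space with no support constraint. Bound (i) follows a posteriori from your self-consistent estimate at the fixed point.
\end{itemize}

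Your uniqueness argument is unaffected by this issue: two fixed points satisfy the contraction inequality directly, and Gr\"onwall closes it without any invariance.
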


The proof of~\cref{thrm: well posedness} follows the standard arguments for
Carathéodory differential equations~\cite[Chapter 2]{imtc} and non-local
continuity equations with Lipschitz vector fields~\cite{piccoli2013transport}, and is
therefore omitted. A similar result can be found in a related mean-field optimal control
setting in~\cite{fornasier2014mean}. Moreover, the
a priori growth bounds in~\cref{thrm: well posedness} show that the assumption
\eqref{eqn: lipschitz continuity} only needs to hold
on bounded sets. Indeed, by the estimate in~(i), all trajectories starting from
$m_\infty(\mu_0)\le R$ and $\norm{y_0}\le R$ remain in a ball whose radius depends only on $R$, $U$, $\mathbf{C}$,
and $T$, so global assumptions can be replaced by local ones on such balls. To avoid overburdening the notation, we nevertheless prefer to work in the globally Lipschitz setting throughout.

\subsection{Continuous dependence}

We now investigate the dependence of the solution of \eqref{eqn: coupled PDE-ODE system} on the initial conditions and on the control laws.  Throughout, we assume that \eqref{eqn: lipschitz continuity} and \eqref{eqn: sublinear growth} hold.

Our first result, whose proof is a standard application of the Grönwall lemma, shows Lipschitz dependence on initial data.

\begin{proposition}\label{theorem: Lipschitz dependence on initial data}
For every $R>0$ there exists $L=L(R,\mathbf{C}, U, T)>0$ such that, if $(\bfmu_1,\bfy_1)$ and $(\bfmu_2,\bfy_2)$ are solutions of \eqref{eqn: coupled PDE-ODE system} with control $\control$ and initial conditions $(\mu_{0,1},y_{0,1})$ and $(\mu_{0,2},y_{0,2})$ at time $t_0$, satisfying $m_\infty(\mu_{0,1}),m_\infty(\mu_{0,2}),\norm{y_{0,1}},\norm{y_{0,2}}\leq R$, then the estimate
\begin{equation}\label{eqn: continuous dependence, thesis}
    \mathfrak{d}\bigl((\bfmu_1(t),\bfy_1(t)),(\bfmu_2(t),\bfy_2(t))\bigr)\leq L\mathfrak{d}\bigr((\mu_{0,1},y_{0,1}),(\mu_{0,2},y_{0,2})\bigr)
\end{equation}
holds for every $t\in[0,T]$.
\end{proposition}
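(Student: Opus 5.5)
We plan to prove the estimate through the Lagrangian representation of \cref{thrm: well posedness}(ii), coupling the two measure trajectories along their flows, and then closing a Gr\"onwall inequality for the combined distance. Set $\bar R=\bar R(R,U,\mathbf{C},T)$ as in \cref{thrm: well posedness}(i). All supports and controlled states then remain in $B(0,\bar R)$, although under the global \eqref{eqn: lipschitz continuity} we will not even need this. Let $\bfphi_1,\bfphi_2$ be the flows generated by the two solutions, with $\bfmu_i(t)=\bfphi_i(t,\cdot)_\#\mu_{0,i}$.

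\textbf{Coupling.} Fix an optimal coupling $\gamma_0\in\Gamma^o(\mu_{0,1},\mu_{0,2})$ and define $\gamma_t\coloneqq(\bfphi_1(t,\cdot)\times\bfphi_2(t,\cdot))_\#\gamma_0\in\Gamma(\bfmu_1(t),\bfmu_2(t))$. Set
\[
D(t)\coloneqq\Bigl(\int\norm{\bfphi_1(t,x)-\bfphi_2(t,z)}^2\dd\gamma_0(x,z)\Bigr)^{1/2}\ \ge W_2(\bfmu_1(t),\bfmu_2(t)),
\]
and $e(t)\coloneqq D(t)+\norm{\bfy_1(t)-\bfy_2(t)}$, so that $e(t_0)=\mathfrak{d}((\mu_{0,1},y_{0,1}),(\mu_{0,2},y_{0,2}))$.

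\textbf{Trajectory estimate.} For $\gamma_0$-a.e.\ $(x,z)$ we write $\bfphi_1(t,x)-\bfphi_2(t,z)=x-z+\int_{t_0}^t(F[\bfmu_1]-F[\bfmu_2])\,\dd s$. By \eqref{eqn: lipschitz continuity}, the integrand is bounded by $\mathbf{L}(\norm{\bfphi_1(s,x)-\bfphi_2(s,z)}+W_2(\bfmu_1(s),\bfmu_2(s))+\norm{\bfy_1(s)-\bfy_2(s)})$. We take the $L^2(\gamma_0)$ norm and apply Minkowski's integral inequality, using $W_2\le D$. This gives
\[
D(t)\le D(t_0)+\mathbf{L}\int_{t_0}^t\bigl(2D(s)+\norm{\bfy_1(s)-\bfy_2(s)}\bigr)\dd s .
\]
For the controlled states, the same control $\control$ cancels in the difference of the Carath\'eodory integral equations. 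Using \eqref{eqn: lipschitz continuity} again gives $\norm{\bfy_1(t)-\bfy_2(t)}\le\norm{y_{0,1}-y_{0,2}}+\mathbf{L}\int_{t_0}^t(D+\norm{\bfy_1-\bfy_2})\dd s$.

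\textbf{Conclusion.} Summing the two estimates yields $e(t)\le e(t_0)+3\mathbf{L}\int_{t_0}^t e(s)\dd s$. Gr\"onwall's lemma then gives $e(t)\le e^{3\mathbf{L}T}e(t_0)$, and since $\mathfrak{d}\le e$, the claim follows with $L=e^{3\mathbf{L}T}$. For $t<t_0$ we run the same argument backward in time. If only local Lipschitz bounds on $B(0,\bar R)$ were assumed, the constant would depend on $R,\mathbf{C},U,T$ through $\bar R$, which matches the stated dependence. The argument needs continuity of $D$, which follows from the flows being Lipschitz in time uniformly on the compact set $\supp\gamma_0$.

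The only delicate step is the coupling. We cannot differentiate $W_2(\bfmu_1(t),\bfmu_2(t))$ directly, so we replace it by the transported, generally non-optimal coupling cost $D(t)$, which dominates $W_2$ and whose evolution is controlled pointwise along characteristics. The non-local term is harmless because it is bounded by $W_2\le D$.
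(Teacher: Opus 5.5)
Your proposal is correct and is the standard Gr\"onwall argument that the paper invokes but does not write out. Transporting an optimal initial coupling along the product of the two flows, so that $W_2(\mu_1(t),\mu_2(t))\le D(t)$, is the same device the paper uses in the proof of the flow-differentiability proposition, where it bounds $W_2(\mu^\varepsilon_s,\rho^\varepsilon_s)$ by the $L^2(\mu_0)$ distance of the flows; your constant $e^{3\mathbf{L}T}$ also depends on $\mathbf{L}$, which the paper's notation $L(R,\mathbf{C},U,T)$ treats as fixed, so this is consistent with the statement.
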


We now show that solutions depend continuously on controls.

\begin{theorem}[Continuous dependence]\label{theorem: continuous dependence on controls}
Let $(\control_N)\seq\subset \U$ be such that $\control_N\rightharpoonup \control$ weakly in
$L^2([0,T],\R^c)$. Let $(\mu_0^N,y_0^N)\seq$ be initial conditions at time $t_0\in[0,T]$ such that $\mu_0^N$ converges in $W_2$ to
$\mu^0\in\probc(\R^d)$ and $y_0^N\to y^0$ in $\R^c$. Assume that there exists $R>0$ such that $\supp \mu_0^N\subset B(0,R)$ and $\norm{y_0^N}\leq R$ for every $N\in\N$. Let $(\bfmu_N,\bfy_N)$ be the (unique) solution of \eqref{eqn: coupled PDE-ODE system}
associated with the control $\control_N$ and initial condition $(\mu_0^N,y_0^N)$, and let $(\bfmu,\bfy)$ be
the (unique) solution associated with $(\control,\mu^0,y^0)$. Then $(\bfmu_N,\bfy_N)\to(\bfmu,\bfy)$ uniformly
in $[0,T]$ with respect to the distance $\mathfrak{d}$.
\end{theorem}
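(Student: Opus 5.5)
The plan is a compactness argument (Arzelà--Ascoli) combined with uniqueness for the limit problem. An alternative would be a direct Grönwall estimate comparing the flows $\bfphi_N$ and $\bfphi$. The only delicate point is that the control enters the ODE linearly and converges only weakly.

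\textbf{Uniform bounds and equicontinuity.} First, by \cref{thrm: well posedness}(i), all $\bfmu_N(t)$ are supported in $B(0,\bar R)$ and $\norm{\bfy_N(t)}\le\bar R$ for a uniform $\bar R$. By \eqref{eqn: sublinear growth} the velocity fields along trajectories are then bounded by some constant $M$. The Lagrangian representation $\bfmu_N(t)=\bfphi_N(t,\cdot)_\#\mu_0^N$ gives
\[
W_2(\bfmu_N(t),\bfmu_N(s))\le \norm{\bfphi_N(t,\cdot)-\bfphi_N(s,\cdot)}_{L^2(\mu_0^N)}\le M|t-s|.
\]
For $\bfy_N$ we have $\norm{\bfy_N(t)-\bfy_N(s)}\le M|t-s|+\norm{\control_N}_{L^2}|t-s|^{1/2}$, and $\norm{\control_N}_{L^2}$ is bounded because $U$ is compact. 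So the curves are equi-Hölder with values in the compact set $\{\mu\in\prob(\R^d):\supp\mu\subset \overline{B(0,\bar R)}\}\times\overline{B(0,\bar R)}$; this set is compact in $W_2$ since on it $W_2$ metrizes narrow convergence. Arzelà--Ascoli then yields, for any subsequence, a further subsequence converging uniformly in $\mathfrak d$ to some $(\tilde\bfmu,\tilde\bfy)$.

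\textbf{Passing to the limit.} I show that $(\tilde\bfmu,\tilde\bfy)$ solves the Cauchy problem with data $(\mu^0,y^0,\control)$. For the PDE, fix a test function $\varphi$ in \eqref{eqn: weak formulation}. By \eqref{eqn: lipschitz continuity}, $F[\bfmu_N(t)](\cdot,\bfy_N(t))\to F[\tilde\bfmu(t)](\cdot,\tilde\bfy(t))$ uniformly on $[0,T]\times B(0,\bar R)$. Narrow convergence $\bfmu_N(t)\to\tilde\bfmu(t)$, uniform in $t$, then lets the integral pass to the limit; equi-compact support is inherited. For the ODE, write
\[
\bfy_N(t)=y_0^N+\int_{t_0}^t G[\bfmu_N(s)](\bfy_N(s))\,\dd s+\int_{t_0}^t\control_N(s)\,\dd s .
\]
The first integral converges by \eqref{eqn: lipschitz continuity} and uniform convergence. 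The second converges pointwise in $t$ to $\int_{t_0}^t\control$, because it is the $L^2$ pairing of $\control_N$ with the fixed function $\mathbf 1_{[t_0,t]}$ and $\control_N\rightharpoonup\control$. So $\tilde\bfy$ is a Carathéodory solution with control $\control$. The initial conditions pass to the limit directly.

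\textbf{Conclusion and main obstacle.} By uniqueness in \cref{thrm: well posedness}, $(\tilde\bfmu,\tilde\bfy)=(\bfmu,\bfy)$. Since every subsequence has a further subsequence converging to this same limit, the whole sequence converges uniformly in $\mathfrak d$. The step I expect to need the most care is the compactness and uniform-in-time convergence in the measure component: one has to make sure that narrow convergence on a fixed ball upgrades to $W_2$ convergence, and that the weak-formulation limit is justified with $N$-dependent fields. The weak convergence of the control is harmless only because it enters affinely; this is exactly what makes the argument work.
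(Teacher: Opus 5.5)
Your proposal is correct and follows essentially the same route as the paper's (sketched) proof. Both use uniform a priori bounds and equicontinuity, Arzel\`a--Ascoli in $\mathfrak{d}$, passage to the limit in the weak formulation and in the integral form of the ODE (using $\int_{t_0}^t\control_N\to\int_{t_0}^t\control$ by weak convergence), and uniqueness to get convergence of the whole sequence. Your write-up is more detailed than the paper's, and two small remarks apply: since $\control_N(t)\in U$ with $U$ compact, the bound on $\bfy_N$ is uniformly Lipschitz, not merely H\"older; and the weak limit $\control$ lies in $\U$ because $\U$ is convex and closed, hence weakly closed, which is what allows you to invoke well-posedness for the limit data.
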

\begin{proof}
The proof follows the lines of \cite[Theorem 3.3]{fornasier2014mean}, and is therefore just sketched. Uniform a priori estimates give equiboundedness and uniform Lipschitz continuity of $(\mu_N,y_N)$. Hence, by Ascoli--Arzel\`a, a subsequence converges uniformly in time to some $(\bfmu,\bfy)$. Passing to the limit in the integral formulation is then straightforward: it relies on the Lipschitz continuity of $F$ and $G$, together with the convergence of the control terms. Indeed, the dynamics depends linearly on the control and $\control_N\rightharpoonup \control$ in $L^2$, so in particular $\int_0^t \control_N(s)\dd s \to \int_0^t \control(s)\dd s$ for every $t\in[0,T]$. Thus, $(\bfmu,\bfy)$ solves \eqref{eqn: coupled PDE-ODE system} with limit data. By uniqueness, the whole sequence converges.
\end{proof}

\section{Differentiation of the dynamics}\label{sec:differentiation}

In this section, we study the differentiability properties of the solutions of \eqref{eqn: coupled PDE-ODE system} with respect to the initial conditions and controls. 
These results are key for the computation of gradients and the derivation of the Pontryagin Maximum Principle in the next section. Since we believe that they are of independent interest, we present them in a dedicated section.
Henceforth, we assume that $F$ and $G$ satisfy both \eqref{eqn: lipschitz continuity} and \eqref{eqn: sublinear growth} so that the coupled system~\eqref{eqn: coupled PDE-ODE system} is well posed.

Let us now fix an initial time $t_0\in[0,T]$, initial reference probability measure $\mu_0\in\probc(\R^d)$ and position $y_0\in \R^\dc$ and a control law $\control^\star\in\mathcal{U}$. Under these conditions, \cref{thrm: well posedness} ensures the existence of a unique probability measure curve $\bfmu^\star\colon [t_0,T]\to\probc(\R^d),$ with corresponding non-local flow $\bfphi^\star\colon [t_0,T]\times \R^d\to\R^d$, and controllable trajectory $\bfy^\star\colon [t_0,T]\to\R^c$ that solve \eqref{eqn: coupled PDE-ODE system}. We refer to $(\bfmu^\star,\bfy^\star)$ as the reference trajectory.
Then, for $\varepsilon>0$, we consider perturbed initial conditions $\mu_0^\varepsilon\in \probc(\R^d)$ and $y_0^\varepsilon\in\R^c$ having the form
\begin{equation}\label{eqn:perturbation}
    \mu^\varepsilon_0 = (\mathrm{Id}+\varepsilon w_0)_\#\mu_0\quad\text{ and }\quad y_0^\varepsilon = y_0+\varepsilon v_0,
\end{equation}
for $w_0\in L^\infty(\R^d,\R^d;\mu_0)$ and $v_0\in\R^c$, as well as a perturbed control $\control^\varepsilon = \control^\star +\varepsilon \delta\control$
where $\delta \control\in L^2(0,T;\R^c)$ is such that $\control^\varepsilon\in \U$. Since $U$ is compact, this forces in particular $\delta \control\in L^\infty(0,T;\R^c)$.
With these perturbed data, we have a unique solution $(\bfmu^\varepsilon,\bfy^\varepsilon)$, where the probability distribution can be expressed as $    \bfmu^\varepsilon(t) = \bfphi^\varepsilon(t,\cdot)_\#\mu_0^\varepsilon$,
with $\bfphi^\varepsilon\colon[t_0,T]\times\R^d\to\R^d$ being the non-local flow corresponding to the initial conditions $(t_0,\mu_0^\varepsilon,y_0^\varepsilon)$ and control $\control^\varepsilon$. 

Because the tangent spaces to $\prob_2(\mathbb R^d)$ depend on the base point (being the closure with respect to $L^2(\R^d,\R^d;\mu)$ of gradients of compactly supported functions, see~\eqref{eq:tangent_space}), 
the linearized dynamics cannot be written in a single fixed Hilbert space, unless we transport tangent vectors to a common reference measure. To this end, we use 
the flow $\bfphi^\star$ to ``pull 
back'' vectors to the initial space $L^2(\R^d,\R^d;\mu_0)$. This naturally leads us to 
formulate the linearized system in the Hilbert space $H(\mu_0) \coloneqq L^2(\R^d,\R^d;\mu_0)\times \mathbb R^c$, whose elements we denote as $\mathfrak{w}\coloneqq (w,v)$, equipped with the inner product
\begin{equation*}
    \bigl((w_1,v_1),(w_2,v_2)\bigr)_{H(\mu_0)}
    \coloneqq 
    \int_{\mathbb R^d} w_1(x)\cdot w_2(x) \dd\mu_0(x) + v_1\cdot v_2.
\end{equation*}
We denote by $\norm{\cdot}_{H(\mu_0)}$ the corresponding norm.
On this space, we introduce the family of linear operators $\A(t)\colon H(\mu_0) \to H(\mu_0) $ for $t\in[0,T]$
\begin{equation*}
    \A(t) = \bigl(\A_w(t), \A_v(t)), \qquad \A_w(t)\colon H(\mu_0)\to L^2(\R^d,\R^d;\mu_0), \quad \A_v(t)\colon H(\mu_0)\to \R^c,
\end{equation*}
where
\begin{align*}
    \bigl(\A_w(t) \mathfrak{w}\bigr)(x) &=  D_xF[\bfmu^\star(t)](\bfphi^\star(t,x),\bfy^\star(t))\, w(x)
    + D_yF[\bfmu^\star(t)](\bfphi^\star(t,x), \bfy^\star(t))\, v\\
    &+\int_{\R^d}\bigl(D_\mu F[\bfmu^\star(t)]_{(\bfphi^\star(t,x),\bfy^\star(t))}(\bfphi^\star(t,\zeta))\bigr)\, w(\zeta)\dd\mu_0(\zeta)
\end{align*}
and
\begin{align*}
    \A_v(t)\mathfrak{w}=&D_yG[\bfmu^\star(t)](\bfy^\star(t))\, v
    +\int_{\R^d}\bigl(D_\mu G[\bfmu^\star(t)]_{\bfy^\star(t)}(\bfphi^\star(t,\zeta))\bigr)\, w(\zeta)\dd\mu_0(\zeta).
\end{align*}
Informally, these operators are obtained by differentiating the vector fields $F$ and $G$ with respect to the state variables $x,y$ and the distribution $\mu$, where we use the flow $\bfphi^\star$ to express all integrals with respect the initial probability measure $\mu_0$ instead of $\bfmu^\star(t)$.
We now consider the linearized equation along the reference trajectory, which gives the linear inhomogeneous system
\begin{equation}\label{eqn: abstract linearized system}
    \begin{cases}
        \dot{\mathfrak{w}}(t) = \A(t) \mathfrak{w}(t)+\overline{\delta \control}\\
        \mathfrak{w}(t_0) = \mathfrak{w}_0 \coloneq (w_0, v_0),
    \end{cases}
    \qquad 
    \text{where}
    \qquad 
    \overline{\delta \control}=\bigl(0_{L^2(\R^d,\R^d;\mu_0)}, \delta \control\bigr),
\end{equation}
whose initial condition is given precisely by the ``vectors'' $w_0$ and $v_0$ that induce the $\varepsilon$-perturbations.
Given that we restrict to bounded perturbations (i.e., $w_0\in L^\infty(\R^d,\R^d;\mu_0)$ and $v_0\in\R^c$), it is instrumental to introduce the Banach space
\begin{equation*}
    X(\mu_0) \coloneqq L^\infty(\R^d,\R^d;\mu_0)\times \mathbb R^c, \hspace{1.5cm}\norm{\mathfrak{w}}_{X(\mu_0)}=\norm{w}_{L^\infty(\R^d,\R^d;\mu_0)}+\norm{v},
\end{equation*}
that embeds continuously into $H(\mu_0)$.
The goal of this section is to prove the following result, which, informally, quantifies the effect of the perturbation on the terminal cost via solutions of \eqref{eqn: abstract linearized system}.

\vspace{0.15cm}

\begin{boxedtheorem}[Differentiable dependence]\label{thrm: differentiability}
Assume that $\psi\colon\prob_2(\R^d)\to\R$ is Wasserstein differentiable at
$\bfmu^\star(T)$, and that $F$ and $G$ satisfy the following regularity
assumptions:
\begin{enumerate}[label=\textnormal{H\arabic*)}]
  \item $F$ and $G$ are differentiable in all their variables;
  \item the gradients in $x$ and $y$ are continuous,
  \begin{equation*}
    (\mu,x,y)\mapsto D_{x,y}F[\mu](x,y),
    \qquad
    (\mu,y)\mapsto D_y G[\mu](y);
  \end{equation*}
  \item the Wasserstein differentials are continuous,
  \begin{equation*}
    (\mu,x,y,\zeta)\mapsto D_\mu F[\mu]_{(x,y)}(\zeta),
    \qquad
    (\mu,y,\zeta)\mapsto D_\mu G[\mu]_{y}(\zeta).
  \end{equation*}
\end{enumerate}
Then, if $\mathfrak{w}_0\in X(\mu_0)$, for the solutions $(\bfmu^\varepsilon,\bfy^\varepsilon)$ associated with
the perturbed data $(\mu_0^\varepsilon,y_0^\varepsilon,\control^\varepsilon)$,
we have
\begin{equation*}
  \lim_{\varepsilon\to 0}
  \frac{\psi\bigl(\bfmu^\varepsilon(T)\bigr)
        - \psi\bigl(\bfmu^\star(T)\bigr)}{\varepsilon}
  = \int_{\R^d}
      D_\mu\psi[\bfmu^\star(T)]
      \bigl(\bfphi^\star(T,x)\bigr)\cdot \mathbf w(T,x)\,d\mu_0(x),
\end{equation*}
where $\mathfrak w=(\mathbf w,\mathbf v)$ is the unique solution of
\eqref{eqn: abstract linearized system}.
\end{boxedtheorem}

\vspace{0.15cm}

The proof requires some intermediate steps, which we present next, and is thus deferred to~\Cref{subsec:proof:differentiability}. 

\begin{remark}
Perturbations on probability measures induced by transport maps, such as those in \eqref{eqn:perturbation}, are relevant in distribution steering problems and, as we shall see below, cover the ones induced by needle variations in the proof of the \gls{acr:pmp}.
Accordingly, the Wasserstein geometry, and thus Wasserstein calculus, is natural for the linearization of~\eqref{eqn: coupled PDE-ODE system} in $\prob_2(\R^d)$. 
In contrast, extrinsic notions of differentiation (e.g., flat derivatives) do not naturally encode transport-induced variations.
\end{remark}

\subsection{The linearized dynamics}

To start, we study the well-posedness and main properties of the linearized system \eqref{eqn: abstract linearized system} and introduce its evolution family.

\begin{proposition}\label{propo: linear system}
The linear inhomogeneous system \eqref{eqn: abstract linearized system} admits a unique solution
\begin{equation}\label{eqn: representation formula for the linear system}
    \mathfrak{w}(t)=\M(t,t_0) \mathfrak{w}_0+\int_{t_0}^t \M(t,\tau) \overline{\delta\control}(\tau) d\tau
\end{equation}
where $\M$ is the evolution family associated to the homogeneous part of \eqref{eqn: abstract linearized system}.
Moreover, the subspace $X(\mu_0)$ is invariant; i.e., 
if $\mathfrak w_0\in X(\mu_0)$
then the solution $\mathfrak w$ of \eqref{eqn: abstract linearized system}
satisfies $\mathfrak w\in C([t_0,T];X(\mu_0))$.
\end{proposition}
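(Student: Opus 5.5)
We show that $t\mapsto\A(t)$ is a strongly measurable, uniformly bounded family in $\mathcal L(H(\mu_0))$ and also in $\mathcal L(X(\mu_0))$, and then apply the classical theory of linear ODEs in Banach spaces with bounded (non-autonomous) generators.

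\emph{Uniform bounds on $\A(t)$.} By \cref{thrm: well posedness}(i), the reference trajectory satisfies $\norm{\bfy^\star(t)}\le\bar R$ and $\norm{\bfphi^\star(t,x)}\le \bar R$ for $\mu_0$-a.e.\ $x$ and all $t\in[t_0,T]$. Hypotheses H2)--H3) give continuity of $D_{x,y}F$, $D_yG$, $D_\mu F$ and $D_\mu G$. Restricted to the compact set $\{(\bfmu^\star(t),x,y,\zeta): t\in[t_0,T],\ \norm{x},\norm{y},\norm{\zeta}\le\bar R\}$, these derivatives are therefore bounded by some constant $K$. Here we use that $t\mapsto\bfmu^\star(t)$ is continuous in $W_2$, so its image is compact. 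Alternatively, \eqref{eqn: lipschitz continuity} bounds $D_xF$, $D_yF$, $D_yG$ by $\mathbf L$ directly, and the Wasserstein gradients in $L^2$ by $\mathbf L$.

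The local terms of $\A_w(t)$ then satisfy $\|D_xF\,w+D_yF\,v\|\le K(|w(x)|+\|v\|)$ pointwise. The nonlocal term is the $x$-dependent vector $\int D_\mu F[\cdot]_{(\bfphi^\star(t,x),\bfy^\star(t))}(\bfphi^\star(t,\zeta))\,w(\zeta)\dd\mu_0(\zeta)$. Since $\mu_0$ is a probability measure, it is bounded in norm by $K\|w\|_{L^1(\mu_0)}\le K\|w\|_{L^2(\mu_0)}\le K\|w\|_{L^\infty(\mu_0)}$ uniformly in $x$. The same estimate controls $\A_v$. Hence $\sup_t\|\A(t)\|_{\mathcal L(H(\mu_0))}<\infty$, and because each term maps $X(\mu_0)$ into itself with bound $\le 2K\|\mathfrak w\|_{X(\mu_0)}$, also $\sup_t\|\A(t)\|_{\mathcal L(X(\mu_0))}<\infty$.

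\emph{Measurability.} The coefficient maps $t\mapsto D_xF[\bfmu^\star(t)](\bfphi^\star(t,x),\bfy^\star(t))$ and their analogues are continuous in $t$ for each fixed $x$, since $\bfphi^\star(\cdot,x)$, $\bfy^\star$ and $\bfmu^\star$ are continuous. Dominated convergence, using the uniform bound $K$, then shows that $t\mapsto\A(t)\mathfrak w$ is continuous in $H(\mu_0)$ for each $\mathfrak w$, i.e.\ $\A$ is strongly continuous. In $X(\mu_0)$ only strong measurability is needed for an $L^\infty$-in-time generator, and it holds because the coefficients are Carathéodory functions of $(t,x)$.

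\emph{Existence, uniqueness and invariance.} With a strongly continuous, uniformly bounded $\A$, the homogeneous equation $\dot{\mathfrak w}=\A(t)\mathfrak w$ has a unique mild solution. It is obtained by Picard iteration on $C([t_0,T];H(\mu_0))$, for instance as a fixed point of $\mathfrak w\mapsto\mathfrak w_0+\int_{t_0}^t\A(s)\mathfrak w(s)\dd s$ in a weighted sup norm, and Grönwall gives uniqueness. The resulting solution operators $\M(t,s)$ form an evolution family, given by the Dyson series $\M(t,s)=\sum_n\int_{s\le s_1\le\dots\le s_n\le t}\A(s_n)\cdots\A(s_1)$, with $\|\M(t,s)\|\le e^{K'(t-s)}$. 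The inhomogeneous term $\overline{\delta\control}\in L^\infty(0,T;H(\mu_0))$ is Bochner integrable, so variation of constants yields \eqref{eqn: representation formula for the linear system} and uniqueness follows by applying Grönwall to the difference of two solutions.

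For invariance, run the same Picard iteration in $C([t_0,T];X(\mu_0))$ using the $X$-bound on $\A$, with $\overline{\delta\control}$ taking values in $X(\mu_0)$. This yields an $X$-valued solution, which coincides with the $H$-solution by uniqueness in $H$ and the continuous embedding $X(\mu_0)\hookrightarrow H(\mu_0)$.

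\textbf{Main obstacle.} The delicate point is continuity in time in the $L^\infty$ norm, since dominated convergence does not give $L^\infty$-continuity. The first thing to try is to exploit the compactness of the support: continuity on compacts gives uniform continuity of the coefficients in $(t,x)$ over $[t_0,T]\times B(0,\bar R)$. As a consequence $t\mapsto\A(t)$ is even norm-continuous in $\mathcal L(X(\mu_0))$, because $\bfphi^\star$ is Lipschitz in $t$ uniformly in $x$. Then $\mathfrak w\in C([t_0,T];X(\mu_0))$ follows directly from the integral equation.
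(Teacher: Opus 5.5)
Your proof is correct and follows the same route as the paper. The ingredients match: the coefficients of $\mathcal{A}(t)$ are bounded on the compact set swept by the reference trajectory (via H2)--H3)), and $t\mapsto\mathcal{A}(t)$ is continuous in operator norm. You then invoke the standard theory of linear evolution equations with bounded generators, Duhamel's formula, and invariance of $X(\mu_0)$ because $\mathcal{A}(t)$ acts boundedly on it; the only difference is that you build the evolution family by hand via Picard iteration and the Dyson series, where the paper simply cites Pazy. Your last paragraph, which derives norm continuity of $\mathcal{A}$ in $\mathcal L(X(\mu_0))$ from uniform continuity of the coefficients, is what actually makes $\mathfrak w\in C([t_0,T];X(\mu_0))$ rigorous, a point the paper glosses over. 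It also supersedes your earlier remark that Carath\'eodory coefficients give strong measurability, which would not suffice on its own in the non-separable space $L^\infty$.
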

\begin{proof}
Since $\supp\mu_0$ is compact and $(t,x)\mapsto \bfphi^\star(t,x)$ is continuous,
all coefficients appearing in $\A(t)$ are bounded on compact sets by assumptions
\textnormal{H2)--H3)}. This implies $\A(t)\in\mathcal L(H(\mu_0))$ and continuity
of $t\mapsto \A(t)$ in operator norm, so well-posedness for the homogeneous part follows from
\cite[Theorem~5.2]{pazy2012semigroups}. The inhomogeneous part is integrable from $[t_0,T]\to X(\mu_0)$ so the Duhamel principle yields \eqref{eqn: representation formula for the linear system}. The same boundedness yields that $\A(t)$
maps $X(\mu_0)$ into itself, hence $X(\mu_0)$ is invariant.
\end{proof}

Next, we linearize the system dynamics along the reference trajectory. 
In the previous part, we showed that the linearized system \eqref{eqn: abstract linearized system} admits a unique solution $(\bfw,\bfv)$. Therefore, we can consider the corresponding perturbed maps
\begin{equation*}
    \bfpsi^\varepsilon\coloneqq \bfphi^\star+\varepsilon \bfw\quad \text{ and }\quad \bfz^\varepsilon\coloneqq \bfy^\star+\varepsilon \bfv
\end{equation*}
and curve of probability measures $\bfrho^\varepsilon(t)\coloneqq \bfpsi^\varepsilon(t,\cdot)_\#\mu_0$.
Then, the vector fields $F$ and $G$ admit the following expansion along this perturbation.

\begin{proposition}\label{propo: first order expansion}
Let $\bfrho^\varepsilon$, $\bfpsi^\varepsilon$, $\bfz^\varepsilon$ and
$\mathfrak w=(\bfw,\bfv)$ be defined as above. Then there exist remainder terms
$R_F^\varepsilon\colon[t_0,T]\to L^2(\R^d,\R^d;\mu_0)$ and
$R_G^\varepsilon\colon[t_0,T]\to\R^c$ such that, for every
$s\in[t_0,T]$ and $\mu_0$-a.e.\ $x\in\R^d$,
\begin{align*}
  F[\bfrho^\varepsilon(s)]\bigl(\bfpsi^\varepsilon(s,x),\bfz^\varepsilon(s)\bigr)
  &= F[\bfmu^\star(s)]\bigl(\bfphi^\star(s,x), \bfy^\star(s)\bigr)
     + \varepsilon\,\bigl(\A_w(s)\mathfrak w(s)\bigr)(x)
     + R_F^\varepsilon(s,x)\\
  G[\bfrho^\varepsilon(s)]\bigl(\bfz^\varepsilon(s)\bigr)
  &= G[\bfmu^\star(s)]\bigl(\bfy^\star(s)\bigr)
     + \varepsilon\,\A_v(s)\mathfrak w(s)
     + R_G^\varepsilon(s).
\end{align*}
Moreover, the remainders are $o(\varepsilon)$ uniformly in $s$, in the sense that
\begin{equation*}
    \sup_{s\in[t_0,T]}
    \frac{\|R_F^\varepsilon(s,\cdot)\|_{L^2(\R^d,\R^d;\mu_0)}}{\varepsilon}
    \longrightarrow 0,
  \qquad
  \sup_{s\in[t_0,T]}
    \frac{\norm{R_G^\varepsilon(s)}}{\varepsilon}
    \longrightarrow 0
  \quad\text{as }\varepsilon\to 0.
\end{equation*}
In particular, the first expansion holds in $L^2(\R^d,\R^d;\mu_0)$ and both remainders are
uniformly $o(\varepsilon)$ in time.
\end{proposition}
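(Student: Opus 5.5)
The plan is to define the remainders by the displayed identities themselves, so only the uniform $o(\varepsilon)$ bounds need proof. Each difference splits into a finite-dimensional part (variation in $x$ and $y$ at the fixed measure $\bfmu^\star(s)$) and a measure part (variation from $\bfmu^\star(s)$ to $\bfrho^\varepsilon(s)$ at the perturbed point). Both are controlled with H2)--H3), uniform continuity on compact sets, and the fact that the coupling $\gamma_s^\varepsilon\coloneqq(\bfphi^\star(s,\cdot),\bfpsi^\varepsilon(s,\cdot))_\#\mu_0\in\Gamma(\bfmu^\star(s),\bfrho^\varepsilon(s))$ has cost $\varepsilon\|\bfw(s)\|_{L^2(\R^d,\R^d;\mu_0)}$, while $W_2(\bfmu^\star(s),\bfrho^\varepsilon(s))\le\varepsilon\|\bfw(s)\|_{L^2}$.

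The first step is a priori boundedness. By \cref{propo: linear system}, $\mathfrak w\in C([t_0,T];X(\mu_0))$, so $M\coloneqq\sup_s(\|\bfw(s)\|_{L^\infty}+\norm{\bfv(s)})<\infty$. By \cref{thrm: well posedness}(i), $\bfphi^\star$ and $\bfy^\star$ stay in a ball of radius $\bar R$ for $\mu_0$-a.e.\ $x\in\supp\mu_0$. Hence for $\varepsilon\le1$ all points $\bfpsi^\varepsilon(s,x)$ and $\bfz^\varepsilon(s)$ stay in a fixed compact set $K$, and all measures $\bfrho^\varepsilon(s)$ are supported in a fixed ball. This compactness makes the continuity in H2)--H3) uniform.

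For $F$, write
\begin{align*}
R_F^\varepsilon(s,x)={}&\Bigl(F[\bfmu^\star(s)](\bfpsi^\varepsilon,\bfz^\varepsilon)-F[\bfmu^\star(s)](\bfphi^\star,\bfy^\star)-\varepsilon D_xF\,\bfw-\varepsilon D_yF\,\bfv\Bigr)\\
&+\Bigl(F[\bfrho^\varepsilon(s)](\bfpsi^\varepsilon,\bfz^\varepsilon)-F[\bfmu^\star(s)](\bfpsi^\varepsilon,\bfz^\varepsilon)-\varepsilon\textstyle\int D_\mu F[\bfmu^\star(s)]_{(\bfphi^\star,\bfy^\star)}(\bfphi^\star(s,\zeta))\bfw(s,\zeta)\dd\mu_0(\zeta)\Bigr).
\end{align*}
The first bracket is handled by the mean value theorem in integral form, $\int_0^1[D_{x,y}F(\cdot+\theta\varepsilon(\bfw,\bfv))-D_{x,y}F(\cdot)]\dd\theta\,\varepsilon(\bfw,\bfv)$. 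Uniform continuity of $D_{x,y}F$ on $\{\bfmu^\star(s)\}_s\times K$ bounds it by $\varepsilon M\,\omega(\varepsilon M)$, uniformly in $s$ and $x$. The set $\{\bfmu^\star(s)\}_s$ is compact because $s\mapsto\bfmu^\star(s)$ is continuous.

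The second bracket is the main obstacle. \Cref{proposition: Wasserstein gradients are strong} gives a pointwise $o(\cdot)$ for each fixed evaluation point $z=(\bfpsi^\varepsilon(s,x),\bfz^\varepsilon(s))$ and base measure, but that is not uniform in $(s,z)$. Moreover, the gradient must be evaluated at $(\bfphi^\star,\bfy^\star)$ rather than at $z$. I would therefore avoid the abstract $o(\cdot)$ and instead interpolate along the curve $\theta\mapsto\bfrho^{\theta\varepsilon}(s)=(\bfphi^\star+\theta\varepsilon\bfw)(s,\cdot)_\#\mu_0$, $\theta\in[0,1]$. Differentiability of $F$ in $\mu$, via \cref{proposition: Wasserstein gradients are strong} applied at each $\bfrho^{\theta\varepsilon}(s)$ with the coupling induced by the maps, shows that $\theta\mapsto F[\bfrho^{\theta\varepsilon}(s)](z)$ has derivative $\varepsilon\int D_\mu F[\bfrho^{\theta\varepsilon}(s)]_z(\bfphi^\star+\theta\varepsilon\bfw)\,\bfw\dd\mu_0$. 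This derivative is continuous in $\theta$ by H3) together with continuity of $\theta\mapsto\bfrho^{\theta\varepsilon}$ in $W_2$. Integrating in $\theta$, the bracket equals
\[
\varepsilon\int_0^1\!\!\int\bigl[D_\mu F[\bfrho^{\theta\varepsilon}(s)]_z(\bfphi^\star(s,\zeta)+\theta\varepsilon\bfw(s,\zeta))-D_\mu F[\bfmu^\star(s)]_{(\bfphi^\star,\bfy^\star)}(\bfphi^\star(s,\zeta))\bigr]\bfw(s,\zeta)\dd\mu_0\dd\theta.
\]
Since $(\mu,x,y,\zeta)\mapsto D_\mu F$ is continuous and all arguments range in a compact subset, the compact set being $\{\bfrho^{\theta\varepsilon}(s)\}$ times balls, compact in $W_2$, this is uniformly continuous. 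The integrand is then at most $\omega(\varepsilon M+W_2\text{-distance})M\to0$ uniformly. The $G$ remainder is treated identically, without the $x$-variable.

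Combining the estimates gives $\sup_{s,x}|R_F^\varepsilon(s,x)|/\varepsilon\to0$, which is stronger than the $L^2(\mu_0)$ claim, and likewise for $R_G^\varepsilon$. If justifying the chain rule along the interpolating curve proves delicate, the fallback is a Taylor estimate directly with the coupling $\gamma_s^\varepsilon$, upgraded to uniformity by a contradiction/compactness argument over sequences $(s_n,z_n,\varepsilon_n)$. That fallback again rests on H3) and the compactness of supports.
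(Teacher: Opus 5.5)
Your proposal is correct and is essentially the paper's argument. You split off the $(x,y)$-variation by a mean-value estimate, handle the measure variation by integrating the Wasserstein derivative along $\theta\mapsto(\bfphi^\star+\theta\varepsilon\bfw)(s,\cdot)_\#\mu_0$ (re-deriving inline the paper's integral-remainder lemma from the appendix), and close with uniform continuity from H2)--H3) on a compact set of measures and points. The only difference is the intermediate point: the paper passes through $F[\bfrho^\varepsilon(s)](\bfphi^\star(s,x),\bfy^\star(s))$, so it must replace the base measure in $D_{x,y}F$, whereas you pass through $F[\bfmu^\star(s)](\bfpsi^\varepsilon(s,x),\bfz^\varepsilon(s))$ and absorb that replacement into the joint uniform continuity of $D_\mu F$ in $(\mu,x,y,\zeta)$; this works equally well and even gives an essential-sup bound in $x$, slightly stronger than the $L^2(\mu_0)$ claim.
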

\begin{proof}
See Appendix \ref{sec: technical proofs}.
\end{proof}

\subsection{Derivative of the flow}

We can now state the fundamental result on the differentiability of solutions of \eqref{eqn: coupled PDE-ODE system} with respect to initial conditions and controls.

\begin{proposition}\label{theorem: differentiability with respect to initial conditions}
Let $w_0\in L^\infty(\R^d,\R^d;\mu_0)$, $v_0\in\R^c$ and
$\mathfrak w=(\bfw,\bfv)$ be the unique solution of
\eqref{eqn: abstract linearized system} with initial condition
$\mathfrak w(t_0)=(w_0,v_0)$. For $\varepsilon>0$ and $t\in[t_0,T]$ define $\bftheta^\varepsilon(t,x)
  \coloneqq \bfphi^\varepsilon\bigl(t,x+\varepsilon w_0(x)\bigr)$ in $L^2(\R^d,\R^d;\mu_0)$. Then
\begin{equation*}
      \lim_{\varepsilon\to 0}
  \frac{(\bftheta^\varepsilon,\bfy^\varepsilon)
        -(\bfphi^\star,\bfy^\star)}{\varepsilon}
  = \mathfrak w
\end{equation*}
in the uniform topology from $[t_0,T]$ into $H(\mu_0)$.
\end{proposition}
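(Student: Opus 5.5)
The approach is a Grönwall argument on the difference between the true perturbed trajectory, pulled back to $\mu_0$, and its first-order approximation $(\bfpsi^\varepsilon,\bfz^\varepsilon)=(\bfphi^\star+\varepsilon\bfw,\bfy^\star+\varepsilon\bfv)$. Define the errors
\begin{equation*}
e^\varepsilon(t)\coloneqq\bftheta^\varepsilon(t,\cdot)-\bfpsi^\varepsilon(t,\cdot)\in L^2(\R^d,\R^d;\mu_0),\qquad
f^\varepsilon(t)\coloneqq\bfy^\varepsilon(t)-\bfz^\varepsilon(t)\in\R^c .
\end{equation*}
The statement is equivalent to $\sup_{t}\bigl(\norm{e^\varepsilon(t)}_{L^2(\R^d,\R^d;\mu_0)}+\norm{f^\varepsilon(t)}\bigr)=o(\varepsilon)$. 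Both $e^\varepsilon$ and $f^\varepsilon$ vanish at $t_0$, because $\bftheta^\varepsilon(t_0,x)=x+\varepsilon w_0(x)=\bfpsi^\varepsilon(t_0,x)$ and $\bfy^\varepsilon(t_0)=y_0+\varepsilon v_0$.

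\textbf{Key observation.} Since $\mu_0^\varepsilon=(\Id+\varepsilon w_0)_\#\mu_0$, the representation in \cref{thrm: well posedness}(ii) gives $\bfmu^\varepsilon(t)=\bftheta^\varepsilon(t,\cdot)_\#\mu_0$. Also, $t\mapsto\bftheta^\varepsilon(t,x)$ solves $\partial_t\bftheta^\varepsilon=F[\bfmu^\varepsilon(t)](\bftheta^\varepsilon,\bfy^\varepsilon)$. Because $\bfmu^\varepsilon(t)$ and $\bfrho^\varepsilon(t)$ are both push-forwards of $\mu_0$, the coupling $(\bftheta^\varepsilon(t),\bfpsi^\varepsilon(t))_\#\mu_0$ yields
\begin{equation*}
W_2\bigl(\bfmu^\varepsilon(t),\bfrho^\varepsilon(t)\bigr)\le\norm{e^\varepsilon(t)}_{L^2(\R^d,\R^d;\mu_0)} .
\end{equation*}

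\textbf{Integral inequality.} Write the integral equations for $\bftheta^\varepsilon$ and $\bfy^\varepsilon$. Subtract the integrated linearized equations: by \cref{propo: linear system}, $\bfpsi^\varepsilon(t)=\bfphi^\star(t)+\varepsilon\bfw(t)$ satisfies $\bfpsi^\varepsilon(t)=x+\varepsilon w_0+\int_{t_0}^t\bigl(F[\bfmu^\star](\bfphi^\star,\bfy^\star)+\varepsilon\A_w\mathfrak w\bigr)\dd s$, and analogously for $\bfz^\varepsilon$, which carries the extra term $\control^\star+\varepsilon\delta\control$. The control terms cancel exactly. Adding and subtracting $F[\bfrho^\varepsilon(s)](\bfpsi^\varepsilon(s,x),\bfz^\varepsilon(s))$ and using \cref{propo: first order expansion} gives
\begin{equation*}
e^\varepsilon(t,x)=\int_{t_0}^t\Bigl(F[\bfmu^\varepsilon](\bftheta^\varepsilon,\bfy^\varepsilon)-F[\bfrho^\varepsilon](\bfpsi^\varepsilon,\bfz^\varepsilon)\Bigr)\dd s+\int_{t_0}^tR_F^\varepsilon(s,x)\dd s ,
\end{equation*}
and similarly for $f^\varepsilon$ with $G$ and $R_G^\varepsilon$. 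By \eqref{eqn: lipschitz continuity} and the coupling bound above, the first integrand is bounded pointwise by $\mathbf L\bigl(\norm{e^\varepsilon(s,x)}+\norm{e^\varepsilon(s)}_{L^2(\R^d,\R^d;\mu_0)}+\norm{f^\varepsilon(s)}\bigr)$. Taking $L^2(\mu_0)$ norms with Minkowski's integral inequality gives
\begin{equation*}
\norm{e^\varepsilon(t)}_{L^2(\R^d,\R^d;\mu_0)}+\norm{f^\varepsilon(t)}\le C\int_{t_0}^t\bigl(\norm{e^\varepsilon(s)}_{L^2(\R^d,\R^d;\mu_0)}+\norm{f^\varepsilon(s)}\bigr)\dd s+T\sup_s\bigl(\norm{R_F^\varepsilon(s)}_{L^2(\R^d,\R^d;\mu_0)}+\norm{R_G^\varepsilon(s)}\bigr).
\end{equation*}
Grönwall's lemma then bounds the left side by $Ce^{CT}\cdot o(\varepsilon)$, uniformly in $t$, which proves the claim.

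\textbf{Main obstacle.} The delicate point is already packaged in \cref{propo: first order expansion}: the remainder must be $o(\varepsilon)$ in $L^2(\mu_0)$ uniformly in time, for a perturbation that is bounded in $L^\infty$ because $\mathfrak w\in C([t_0,T];X(\mu_0))$ by \cref{propo: linear system}. Given that result, the only remaining care is measurability and integrability of $\bftheta^\varepsilon$ in $L^2(\mu_0)$. This holds because the a priori bounds of \cref{thrm: well posedness}(i) keep all supports in a fixed ball; note that $\mu_0^\varepsilon$ is compactly supported since $w_0\in L^\infty$. I would also check that the pointwise Lipschitz estimate is legitimate even though \eqref{eqn: lipschitz continuity} is stated with $W_2$. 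It is, since both measures are images of the same $\mu_0$, so the transport-coupling bound applies directly without needing optimal couplings.
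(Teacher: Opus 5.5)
Your proposal is correct and follows essentially the same route as the paper. Like the paper, you compare the pulled-back perturbed trajectory $(\bftheta^\varepsilon,\bfy^\varepsilon)$ with $(\bfpsi^\varepsilon,\bfz^\varepsilon)$, absorb the linear part via \cref{propo: first order expansion}, control the measure argument through $W_2(\bfmu^\varepsilon_s,\bfrho^\varepsilon_s)\le\|\bftheta^\varepsilon_s-\bfpsi^\varepsilon_s\|_{L^2(\R^d,\R^d;\mu_0)}$, and close with Grönwall; you spell out a few steps the paper only sketches (the cancellation of the control terms, and the pointwise Lipschitz bound followed by Minkowski's inequality).
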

\medskip

\begin{proof}
To keep the notation compact we write time dependence as a subscript. We notice that the definition of $\bftheta^\varepsilon_t$ is well-posed since $\bfphi^\varepsilon_t$ is continuous, $\mu_0$ has compact support and $w_0$ is essentially bounded. For $t\in[t_0,T]$ and $x\in\R^d$ set $\Xi^\varepsilon_t
\coloneqq\bigl\|(\boldsymbol{\Theta}^\varepsilon_t,\bfy^\varepsilon_t)-(\bfpsi^\varepsilon_t,\bfz^\varepsilon_t)\bigr\|_{H(\mu_0)}$ and
\begin{equation*}
    \alpha^\varepsilon_t(x)
    \coloneqq F\bigl[\bfmu_t^\varepsilon\bigr]
                     \bigl(\bftheta_t^\varepsilon(x), \bfy_t^\varepsilon\bigr),
  \quad
  \beta^\varepsilon_t(x)
    \coloneqq F\bigl[\bfrho_t^\varepsilon\bigr]\bigl(\bfpsi^\varepsilon_t(x), \bfz^\varepsilon_t\bigr).
\end{equation*}
Using the integral formulation of the nonlinear flow for $(\boldsymbol{\Theta}^\varepsilon,\bfy^\varepsilon)$ and of the linearized system for
$(\bfpsi^\varepsilon,\bfz^\varepsilon)$ one obtains
\begin{equation*}
    \boldsymbol{\Theta}^\varepsilon_t(x)-\bfpsi^\varepsilon_t(x)
=\int_{t_0}^t\!\bigl(\alpha^\varepsilon_s(x)-\beta^\varepsilon_s(x)\bigr)\dd s
+\int_{t_0}^t\!{R_F^\varepsilon}_s(x)\dd s, \quad \text{in }L^2(\R^d,\R^d;\mu_0)
\end{equation*}
for all $t\in[t_0,T]$, where $R_F^\varepsilon$ is the remainder from \cref{propo: first order expansion}.
By Lipschitz continuity of $F$ and the estimate
$W_2(\bfmu^\varepsilon_s,\bfrho^\varepsilon_s)\leq \|\boldsymbol{\Theta}^\varepsilon_s(\cdot)-\bfpsi^\varepsilon_s(\cdot)\|_{L^2(\R^d,\R^d;\mu_0)}$,
we deduce an inequality of the form
\begin{equation*}
    \|\boldsymbol{\Theta}^\varepsilon_t(\cdot)-\bfpsi^\varepsilon_t(\cdot)\|_{L^2(\R^d,\R^d;\mu_0)}
    \leq o(\varepsilon) + C\int_{t_0}^t \Xi^\varepsilon_s\dd s,
\end{equation*}
uniformly in $t$. A completely analogous estimate for the ODE component, taking into account also the perturbation on the control, yields $    \|\bfy^\varepsilon_t-\bfz^\varepsilon_t\|\leq o(\varepsilon)+ C\int_{t_0}^t \Xi^\varepsilon_s\dd s$.
Combining the two bounds gives $    \Xi^\varepsilon_t\le o(\varepsilon)+ 2C\int_{t_0}^t \Xi^\varepsilon_s\dd s$ for $t\in[t_0,T]$. By Grönwall’s lemma, $\sup_{t\in[t_0,T]}\Xi^\varepsilon_t=o(\varepsilon)$.
Recalling that $(\bfpsi^\varepsilon,\bfz^\varepsilon)=(\bfphi^\star,\bfy^\star)+\varepsilon(\bfw,\bfv)$, this is equivalent to
\begin{equation*}
    \sup_{t\in[t_0,T]}
    \left\|
    \frac{(\boldsymbol{\Theta}^\varepsilon_t,\bfy^\varepsilon_t)-(\bfphi^\star_t,\bfy^\star_t)}{\varepsilon}
    -\mathfrak w_t
    \right\|_{H(\mu_0)}
    \longrightarrow 0,
\end{equation*}
which is the desired differentiability statement.
\end{proof}

\subsection{Proof of~\cref{thrm: differentiability}}\label{subsec:proof:differentiability}
Now, the proof of \cref{thrm: differentiability} follows.
\begin{proof}[Proof of~\cref{thrm: differentiability}]
We recall that by the non-local flow property $\bfmu^\star(T) = \bfphi^\star(T,\cdot)_\#\mu_0$, $\bfmu^\varepsilon(T) = \bfphi^\varepsilon(T,\cdot)_\#\mu_0^\varepsilon$
and that, by definition of the perturbed initial measure, $  \mu_0^\varepsilon = (\mathrm{Id}+\varepsilon w_0)_\#\mu_0$.
Consequently,
\begin{equation*}
\bfmu^\varepsilon(T)
  = \bfphi^\varepsilon(T,\cdot)_\#\mu_0^\varepsilon
  = \bigl(\bfphi^\varepsilon(T,\cdot)\circ(\mathrm{Id}+\varepsilon w_0)\bigr)_\#\mu_0
  = \bftheta^\varepsilon(T,\cdot)_\#\mu_0,
\end{equation*}
where $\bftheta^\varepsilon$ is as in~\cref{theorem: differentiability with respect to initial conditions}.
Thus, the measure
$
  \gamma^\varepsilon
  \coloneqq \bigl(\bfphi^\star(T,\cdot),\bftheta^\varepsilon(T,\cdot)\bigr)_\#\mu_0
$
is a coupling between $\bfmu^\star(T)$ and $\bfmu^\varepsilon(T)$. Moreover,
\begin{align*}
    W_{\gamma^\varepsilon}
    &\coloneqq
    \biggl(\int _{\R^d\times\R^d}\norm{x-y}^2\dd\gamma^\varepsilon(x,y)\biggr)^{\frac{1}{2}}
    =
    \biggl(\int_{\R^d} \norm{\bfphi^\star(T,x)-\bftheta^\varepsilon(T,x)}^2\dd\mu_0(x)\biggr)^{\frac{1}{2}}\\
    &=
    \varepsilon\norm{\bfw(T)}_{L^2(\R^d,\R^d;\mu_0)}
    +
    o(\varepsilon)
    =
    O(\varepsilon)
\end{align*}
by~\cref{theorem: differentiability with respect to initial conditions}, where $\mathfrak w=(\bfw,\bfv)$ is the unique solution of the linearized system \eqref{eqn: abstract linearized system}.
Since $\psi$ is Wasserstein differentiable at $\bfmu^\star(T)$ and Wasserstein
gradients are strong (see~\cref{proposition: Wasserstein gradients are strong}) we have
\begin{equation}\label{eqn: diff-psi-Wasserstein}
  \frac{\psi\bigl(\bfmu^\varepsilon(T)\bigr)-\psi\bigl(\bfmu^\star(T)\bigr)}{\varepsilon}
  = \frac{1}{\varepsilon}
    \int_{\R^d\times\R^d} D_\mu\psi[\bfmu^\star(T)](x)\cdot (y-x)\dd\gamma^\varepsilon(x,y)
    + \frac{o(W_{\gamma^\varepsilon})}{\varepsilon}.
\end{equation}
We now note that $o(W_{\gamma^\varepsilon})=o(\varepsilon)$ and recall the definition of $\gamma^\varepsilon$ to get
\begin{align*}
  \eqref{eqn: diff-psi-Wasserstein}
  &= \int_{\R^d}
    D_\mu\psi[\bfmu^\star(T)]\bigl(\bfphi^\star(T,x)\bigr)\cdot
      \frac{\bftheta^\varepsilon(T,x)-\bfphi^\star(T,x)}{\varepsilon}\dd\mu_0(x)
    + \frac{o(\varepsilon)}{\varepsilon}
    \\
    &=
    \left\langle D_\mu\psi[\bfmu^\star(T)]\bigl(\bfphi^\star(T,\cdot)\bigr),
    \frac{\bftheta^\varepsilon(T,\cdot)-\bfphi^\star(T,\cdot)}{\varepsilon}
  \right\rangle_{L^2(\R^d,\R^d;\mu_0)}
  + \frac{o(\varepsilon)}{\varepsilon}.
\end{align*}

By~\cref{theorem: differentiability with respect to initial conditions}, the second term in the scalar product converges in $L^2(\R^d,\R^d;\mu_0)$, as $\varepsilon\to 0$ to $\bfw(T)$. Since
$D_\mu\psi[\bfmu^\star(T)]\circ\bfphi^\star(T,\cdot)\in L^2(\R^d,\R^d;\mu_0)$, we can let $\varepsilon\to 0$ and obtain
\begin{equation*}
      \lim_{\varepsilon\to 0}
  \frac{\psi\bigl(\bfmu^\varepsilon(T)\bigr)-\psi\bigl(\bfmu^\star(T)\bigr)}{\varepsilon}
  = \int_{\R^d}
      D_\mu\psi[\bfmu^\star(T)]\bigl(\bfphi^\star(T,x)\bigr)\cdot
      \bfw(T,x)\dd\mu_0(x),
\end{equation*}
which is the desired formula.
\end{proof}

\section{The optimal control problem}\label{section: optimal control}

We are now ready to consider the optimal control problem:
\begin{equation}\label{eqn: optimal control problem}\tag{\textbf{OCP}}
    \inf_{\control\in\mathcal{U}}J(\mu_0,y_0,\control)=\inf_{u\in\mathcal{U}}\psi\bigl(\bfmu(T;\mu_0,y_0,\control)\bigr),
\end{equation}
subjected to the dynamical constraint
\begin{equation}\label{eqn: general controlled coupled PDE-ODE system}
    \begin{cases}
        \partial_t\mu(t)+\nabla _x\cdot \bigl(F[\mu(t)](\cdot,y(t))\,\mu(t)\bigr)=0\\
        \dot{y}(t)=G[\mu(t)](y(t))+\control(t)
    \end{cases}
\end{equation}
with initial conditions $\mu(0)=\mu_0$ and $y(0)=y_0$. Under assumptions \eqref{eqn: lipschitz continuity} and \eqref{eqn: sublinear growth}, for every $\control\in\mathcal U$ the system admits a unique solution, hence $J$ is well-defined. 

The main result of this section, a \gls{acr:pmp} for \eqref{eqn: optimal control problem}, provides an explicit formula for the G\^ateaux derivative of the cost functional $J$
with respect to the control, together with first-order optimality conditions for the optimal control problem.

A key tool in the derivation is the \emph{backward adjoint equation}
\begin{equation}\label{eqn: abstract adjoint linearized system}
    \begin{cases}
        -\dot{\mathfrak{p}}(t) =\A(t)^* \mathfrak{p}(t)\\
        \mathfrak{p}(T) = \mathfrak{p}_T,
    \end{cases}
\end{equation}
where $\A(t)^*=(\A_w(t)^*, \A_v(t)^*)$ is the adjoint operator of $\A(t)$ and the terminal condition $\mathfrak{p}_T$ belongs to $H(\mu_0)$.
From a direct computation, the adjoint operator admits the following explicit expressions: for $\mu_0$-a.e.\ $x\in\R^d$,
\begin{align*}
            \bigl(\A_w(t)^*\mathfrak{p}\bigr)(x)&=D_xF[\bfmu^\star(t)](\bfphi^\star(t,x),\bfy^\star(t))^\top p(x)
            +D_\mu G[\bfmu^\star(t)]_{\bfy^\star(t)}(\bfphi^\star(t,x))^\top q\\
            &+\int_{\R^d} D_\mu F[\bfmu^\star(t)]_{(\bfphi^\star(t,\zeta),\bfy^\star(t))}(\bfphi^\star(t,x))^\top p(\zeta) \dd\mu_0(\zeta)
\end{align*}
and
\begin{align*}
    \A_v(t)^*\mathfrak{p} &= \int_{\R^d}D_yF[\bfmu^\star(t)](\bfphi^\star(t,\zeta),\bfy^\star(t))^\top p(\zeta)\dd\mu_0(\zeta)
    +D_yG[\bfmu^\star(t)](\bfy^\star(t))^\top q.
\end{align*}
We are now ready to state our \gls{acr:pmp}.

\vspace{0.15cm}

\begin{boxedtheorem}[\gls{acr:pmp}]\label{thrm: PMP}
If $\psi\colon \prob_2(\R^d)\to\R$ is lower semi-continuous with respect to $W_2$ then there exists an optimal control for \eqref{eqn: optimal control problem}.  
Let $\control^\star\in \U$, $(\bfmu^\star,\bfy^\star)$ and $\bfphi^\star$ be the corresponding solution and flow, respectively,
and assume $\psi$ is Wasserstein differentiable at $\bfmu^\star(T)$. If we consider the unique solution $\mathfrak{p}^\star=(\bfp^\star, \bfq^\star)$ of the adjoint equation \eqref{eqn: abstract adjoint linearized system} with terminal condition
\begin{equation*}
    \mathfrak{p}(T)=\bigl(D_\mu\psi [\bfmu^\star(T)](\bfphi^\star(T,\cdot)), 0),
\end{equation*}
we have that for the perturbed control $\control^\varepsilon=\control^\star+\varepsilon \delta\control\in\U$ it holds
\begin{equation*}
    \psi\bigl(\bfmu^\varepsilon(T))=\psi\bigl(\bfmu^\star(T))+\varepsilon\langle \bfq^\star, \delta \control\rangle_{L^2([0,T],\R^c)}+o(\varepsilon).
\end{equation*}
Finally, if $\control^\star$ is a local minimizer for \eqref{eqn: optimal control problem} in the strong $L^2([0,T],\R^c)$ topology on $\U$ , then the following optimality condition holds for almost every $t\in [0,T]$
\begin{equation*}
    \bfq^\star(t)\cdot \control^\star(t)=\min_{\omega\in U}\bfq^\star(t)\cdot\omega.
\end{equation*}
\end{boxedtheorem}
\vspace{0.15cm}

The proof of~\cref{thrm: PMP} is standard given the machinery developed in the previous section. Specifically, the \gls{acr:pmp} follows the classical needle-variation argument (see, e.g., \cite[Theorem~6.1.1]{imtc}), combined with the sensitivity results developed in the previous section. We detail the proof after discussing our main result.

\subsection{Discussion}\label{subsection: discussion}

A few comments on our results are in order.

First, we observe that the PMP in \cref{thrm: PMP} mirrors the classical finite-dimensional PMP, in that it provides a pointwise-in-time optimality condition involving an adjoint state solving a backward adjoint equation, whose terminal condition is given by the (Wasserstein) gradient of the terminal cost. We emphasize that, within this analogy, no additional regularity of the terminal cost beyond differentiability is required (for instance, the existence of a continuous representative is not needed).

Second, although the exposition focuses on the sparse structure $\dot y = G(\mu,y)+\control$, the proof of~\cref{thrm: PMP} only relies on the regularity assumptions on the terminal cost and on the vector fields ensuring \cref{thrm: differentiability}. Consequently, the same arguments apply to more general controlled dynamics, where the control may enter (linearly, if one wishes to carry over the existence proof verbatim) both the transport field $F$ and the drift $G$. In particular, this covers the non sparse settings in \cite{pmpws, nocws}.

Third, under the regularity assumptions of~\cref{theorem: differentiability with respect to initial conditions}, $X(\mu_0)$ is invariant also for the backward adjoint equation. Thus, if we assume that $D_\mu\psi [\bfmu^\star(T)]\bigl(\bfphi^\star(T,\cdot)\bigr)\in L^\infty(\R^d,\R^d;\mu_0)$ we obtain that $\mathbf{p}^\star(t)\in L^\infty(\R^d,\R^d;\mu_0)$ for every $t\in[0,T]$ and so $\boldsymbol{\nu}\coloneqq (\bfphi^\star(t,\cdot), \bfp(t,\cdot))_\#\mu_0$ is a curve of compactly supported probability on the cotangent bundle $\R^d\times\R^d$. This provides us with the Hamiltonian interpretation of the dynamics as $(\boldsymbol{\nu},\bfy,\bfq)$ solves the system
\begin{equation}\label{eqn: phase space coupled system}
    \begin{cases}
        \partial_t\nu(t)+\nabla_{(x,p)}\cdot\bigl(\mathcal{H}[\nu(t)](\cdot,y(t),\cdot,q(t))\nu(t)\bigr)=0\\
        \dot{y}(t)=\control^\star(t)\\
        -\dot{q}(t)=\int_{\R^{d}\times\R^d}D_y F[\pi^1_\#\nu(t)](x,y(t))^\top p \,\dd\nu(t)(x,p),
        \end{cases}
\end{equation}
with boundary conditions $\pi^1_\#\nu(0)=\mu_0, y(0)=y_0, 
        \pi^2_\#\nu(T)=D_\mu\psi[\bfmu^\star(T)](\cdot)_\#\bfmu^\star(T)$, and $q(T)=0$,
where $\mathcal{H}=(\mathcal{H}_x,\mathcal{H}_p)$ with
\begin{align*}
    \mathcal{H}_x[\nu](x,y,p,q)=&F[\pi^1_\#\nu](x,y)\\
    \mathcal{H}_p[\nu](x,y,p,q)=&-D_xF[\pi^1_\#\nu](x,y)^\top p
    -\int_{\R^d\times\R^d} D_\mu F[\pi^1_\#\nu]_{(\xi,y)}^\top(x) r\dd\nu(\xi,r)
\end{align*}
and we assumed $G\equiv 0$ to simplify the notation. We thus recover the formulations in \cite{bongini2017mean, pmpws} as special cases. 
In particular, our analysis reveals that boundedness assumptions are only needed to linearize the system dynamics~\eqref{eqn: coupled PDE-ODE system} but not for the adjoint system~\eqref{eqn: abstract adjoint linearized system} that naturally lives in the larger space $H(\mu_0)$.

Fourth, from a numerical viewpoint, directly solving the optimality system in~\cref{thrm: PMP} amounts, after space discretization, to solving a high-dimensional two-point boundary value problem; consequently, (multiple) shooting techniques are difficult to implement in practice. A gradient-descent approach is instead simpler to implement. Nonetheless, the PMP provides a useful a posteriori criterion to assess the quality of the computed solution. We follow this strategy in the case study of~\Cref{sec:mean field splitting}.

\subsection{Proof of existence}
Existence of optimal controls for~\eqref{eqn: optimal control problem} follows by the direct method. 
Since $\mathcal U$ is weakly sequentially compact in $L^2$ (by compactness and convexity of $U$), any minimizing sequence $(\control_n)_n\subset\mathcal U$ admits a subsequence, still denoted $(\control_n)_n$, such that $\control_n\rightharpoonup \control^\star$ in $L^2$. 
By~\cref{theorem: continuous dependence on controls}, the solution map $\control\mapsto \bfmu(\mu_0,y_0,\control;T)$ is weakly continuous; in particular, $\bfmu(T;\control_n)$ converges to $\bfmu(T;\control^\star)$ in $W_2$.
Since $\psi$ is lower semicontinuous with respect to $W_2$, $\control^\star$ is optimal.

\subsection{Proof of the $\varepsilon$-expansion}

In this part, we derive the gradient formula. Given that the terminal cost $\psi$ is Wasserstein differentiable and the initial conditions are not perturbed, \cref{thrm: differentiability} yields
\begin{align*}
\lim_{\varepsilon\to0}\frac{\psi(\bfmu^\varepsilon(T))-\psi(\bfmu(T))}{\varepsilon}
&=\int_{\R^d} D_\mu\psi[\bfmu(T)](\bfphi(T,x)) \cdot \bfw(T,x) \dd\mu_0(x)\\
&=\Bigl\langle \mathfrak p(T),\int_0^T \mathcal M(T,\tau)\,\overline{\delta\control}(\tau)\,d\tau\Bigr\rangle_{H(\mu_0)}\\
&=\int_0^T \bigl\langle \mathcal M(T,\tau)^*\mathfrak p(T),\,\overline{\delta\control}(\tau)\bigr\rangle_{H(\mu_0)}\,d\tau\\
&=\int_0^T \bigl\langle \mathfrak p(\tau),\,\overline{\delta\control}(\tau)\bigr\rangle_{H(\mu_0)}\dd\tau
=\int_0^T \bfq(\tau)\cdot\delta\control(\tau)\dd\tau,
\end{align*}
where we used Fubini's theorem to exchange the integrals and the representation of the solutions of \eqref{eqn: abstract adjoint linearized system} via the adjoint evolution family. 

\subsection{Proof of the \gls{acr:pmp}}

We fix a locally (in $L_2$) optimal control $\control^\star$ for \eqref{eqn: optimal control problem} with respect to the initial conditions $\mu_0\in\probc(\R^d)$ and $y_0\in\R^c$,  and consider the corresponding optimal trajectory $(\bfmu^\star,\bfy^\star)\colon [0,T]\to \prob_c(\R^d)\times\R^c$ and flow $\bfphi^\star\colon[0,T]\times\R^d\to\R^d$.

To derive a first-order optimality condition for
\eqref{eqn: optimal control problem} we perturb the optimal control $\control^\star$. Fix $\tau\in(0,T]$ and let $\bfphi^\star_\tau$ the flow starting at time $\tau$.
For $\omega\in U$ and $\varepsilon>0$ sufficiently small, we define the \emph{needle variation}
\begin{equation*}
    \control^\varepsilon(t)\coloneqq
    \begin{cases}
        \omega & \text{if }t\in[\tau-\varepsilon,\tau],\\
        \control^\star(t) & \text{otherwise}.
    \end{cases}
\end{equation*}
We denote by $(\bfmu^\varepsilon,\bfy^\varepsilon)$ the corresponding solutions. Since
$\control^\varepsilon=\control^\star$ on $[0,\tau-\varepsilon)$, we have that $    (\bfmu^\varepsilon(t),\bfy^\varepsilon(t))=(\bfmu^\star(t),\bfy^\star(t))$ for all $t\in[0, \tau-\varepsilon]$. In particular, letting $\bfphi_{\tau-\varepsilon}^\varepsilon$ be the flow starting at
$\tau-\varepsilon$ from $(\bfmu^\star(\tau-\varepsilon),\bfy^\star(\tau-\varepsilon))$ driven by
$\control^\varepsilon$, we can write
\begin{equation}\label{eqn: needle variation initial condition}
    \bfmu^\varepsilon(\tau)
    =\Bigl(\bfphi_{\tau-\varepsilon}^\varepsilon(\tau,\cdot)\circ \bfphi^\star_\tau(\tau-\varepsilon,\cdot)\Bigr)_\#\bfmu^\star(\tau).
\end{equation}
Since $\control^\varepsilon\to\control^\star$ strongly in $L^2([0,T],\R^c)$ (hence also weakly),
Theorem~\ref{theorem: continuous dependence on controls} yields that
$(\bfmu^\varepsilon,\bfy^\varepsilon)\to(\bfmu^\star,\bfy^\star)$ on $[\tau-\varepsilon,\tau]$.
Moreover, by compactness of the supports and continuity of $F$, the corresponding vector fields are bounded and uniformly continuous on a common compact region containing the relevant probability measure trajectories, flow characteristics and $y$ trajectories. Therefore, with $F_\tau(x)\coloneqq F[\bfmu^\star(\tau)]\bigl(x,\bfy^\star(\tau)\bigr)$, by the integral formulation of the flow, we have the first-order expansions
\begin{align*}
    \bfphi^\star_\tau(\tau-\varepsilon,x)& = x - \int_{\tau-\varepsilon}^\tau F[\bfmu^\star(t)]\bigl(\bfphi^\star_\tau(t,x), \bfy^\star(t)\bigr)\,\dd t = x - \varepsilon F_\tau(x)+o(\varepsilon),\\
    \bfphi_{\tau-\varepsilon}^\varepsilon (\tau, x) &= x+\int_{\tau-\varepsilon}^\tau F[\bfmu^\varepsilon(t)]\bigl(\bfphi_{\tau-\varepsilon}^\varepsilon(t, x), \bfy^\varepsilon(t)\bigr)\,\dd t = x+ \varepsilon F_\tau(x)+o(\varepsilon),
\end{align*}
uniformly for $x\in\supp\bfmu^\star(\tau)$ and $x\in\supp\bfmu^\star(\tau-\varepsilon)$, respectively. Indeed, by uniform continuity of $F$ on the relevant compact set, there exists $\omega_\varepsilon\to0$ such that $$\sup_x\Bigl|\int_{\tau-\varepsilon}^{\tau}\bigl(F[\bfmu^\star(t)](\bfphi^\star_\tau(t,x),\bfy^\star(t))-F_\tau(x)\bigr)\,\dd t\Bigr|
\le \varepsilon\,\omega_\varepsilon=o(\varepsilon),$$
and analogously for the $\varepsilon$-perturbed flow.
Thus, since $x\mapsto F_\tau(x)$ is $C^1$, composing the two expansions we get $\bfmu^\varepsilon(\tau)=(\mathrm{Id}+w^\varepsilon)_\# \bfmu^\star(\tau)$ with $\norm{w^\varepsilon}_{L^2(\R^d,\R^d;\bfmu^\star(\tau))}=o(\varepsilon)$.
Finally, from a standard application of the Lebesgue differentiation theorem in PMP proofs
(see, e.g., \cite[Thm.~6.1.1]{imtc}) we have, for a.e.\ $\tau\in[0,T]$, $\bfy^\varepsilon(\tau)=\bfy^\star(\tau)+\varepsilon\bigl(\omega-\control^\star(\tau)\bigr)+o(\varepsilon)$.

Fix a $\tau$ for which the previous expansion holds. Since $\control^\varepsilon$ and $\control^\star$ coincide on $[\tau,T]$, the perturbation acts,
from time $\tau$ onward, only through the shifted initial conditions~\eqref{eqn: needle variation initial condition}.
In particular, in the linearized system~\eqref{eqn: abstract linearized system} at time $\tau$ we have
$\delta\control\equiv 0$, and the initial data are
\begin{equation}\label{eqn: initial conditions at tau}
    w(\tau)=0,
    \qquad
    v(\tau)=\omega-\control^\star(\tau).
\end{equation}
By local optimality of $\control^\star$, since $\control^\varepsilon\to\control^\star$ strongly in $L^2([0,T],\R^c)$ , for all $\varepsilon>0$ small enough,
\begin{equation*}
        \frac{\psi(\bfmu^\varepsilon(T))-\psi(\bfmu^\star(T))}{\varepsilon}\geq 0.
\end{equation*}
Applying Theorem~\ref{thrm: differentiability} and passing to the limit $\varepsilon\to 0$ gives
\begin{equation}\label{eqn: limit optimality condition}
    \int_{\R^d}
        D_\mu\psi[\bfmu^\star(T)]\bigl(\bfphi_\tau^\star(T,x)\bigr)\cdot \bfwtau(T,x)\,
    \dd(\bfmu^\star)(\tau)(x)\geq 0,
\end{equation}
where $(\bfwtau,\bfvtau)$ is the solution of the linearized system in $H(\bfmu^\star(\tau))$ with initial conditions \eqref{eqn: initial conditions at tau}. Let $\mathfrak{p}_\tau^\star=(\bfp_\tau^\star,\bfq_\tau^\star)$ be the solution in $H(\bfmu^\star(\tau))$ of the
backward adjoint equation~\eqref{eqn: abstract adjoint linearized system} driven by the flow $\bfphi_\tau^\star$ starting at $\tau$, with terminal condition $\mathfrak{p}^\star_\tau(T)=\bigl(D_\mu\psi [\bfmu^\star(T)](\bfphi^\star_\tau(T,\cdot)), 0)$. By the adjointness relation, the map
\begin{equation*}
    t\mapsto \int_{\R^d} \bfp_\tau^\star(t,x) \cdot\bfwtau(t,x) \dd(\bfmu^\star(\tau))(x)+
    \bfq_\tau^\star(t)\cdot\bfvtau(t)
\end{equation*}
is constant. Thus, \eqref{eqn: limit optimality condition}  tells us that at the terminal time $T$ it is non-negative. As a consequence at time $\tau$, recalling \eqref{eqn: initial conditions at tau},  we have 
\begin{equation*}
    \int_{\R^d}\bfp^\star_\tau(\tau,x)\cdot\bfwtau(\tau,x)\dd(\bfmu^\star(\tau))(x)+\bfq^\star(\tau)\cdot \bfvtau(\tau)=\bfq^\star_\tau(\tau)\cdot(\omega-\control^\star(\tau))\geq 0.
\end{equation*}
It remains to identify the adjoint $\mathfrak{p}_\tau^\star$ in $H(\bfmu^\star(\tau))$ with the adjoint
$\mathfrak{p}^\star$ in $H(\mu_0)$ appearing in~\cref{thrm: PMP}. This follows from the flow composition property and
the change-of-variables formula for pushforward measures. More precisely, since
$\bfmu^\star(\tau)=(\bfphi^\star(\tau,\cdot))_\#\mu_0$, one readily checks that for $t\in[\tau,T]$ and $\mu_0$-all $x\in\R^d$
\begin{equation*}
    \bfp^\star(t,x)=\bfp_\tau^\star\bigl(t,\bfphi^\star(\tau,x)\bigr),\qquad\bfq^\star(t)=\bfq_\tau^\star(t).
\end{equation*}
The claim follows from uniqueness, since both pairs solve~\eqref{eqn: abstract adjoint linearized system} on $[\tau,T]$ with the same terminal condition. Therefore, $\bfq^\star_\tau(\tau)\cdot(\omega-\control^\star(\tau))\geq 0$. Given that the choice of $\omega$ was arbitrary and that \eqref{eqn: needle variation initial condition} holds for almost every $\tau\in[0,T]$ this readily implies that for almost every $t\in [0,T]$ it holds $\bfq^\star(t)\cdot \control^\star(t)=\min_{\omega\in U}\bfq^\star(t)\cdot\omega$.

\section{A mean-field splitting problem}\label{sec:mean field splitting}

In this section, we introduce a microscopic $N$-agent model describing a population of ``passive'' agents interacting with a small number of controlled leaders. The control objective is to split the population distribution into a prescribed finite number of clusters of equal mass, each centered at a designated target location (see \cref{fig: setting} for the two-cluster case). This formulation is natural in applications such as swarm robotics, biology, and the social sciences.
For large $N$, this system is well approximated by the mean-field PDE--ODE dynamics studied in the previous sections. Our theoretical analysis and numerical optimization are carried out at the mean-field level, using a finite-difference discretization of the PDE. In a final step, we validate the resulting mean-field optimal controls by applying them to large but finite agent systems.

\subsection{Model}
We consider, in $\R^d$, a system of $N$ non-controllable agents and $M$ controllable ones over a fixed horizon $[0,T]$. For $N\in\N$ and $n=1,\dots,N$ we denote by $\bfx^{N,n}(t)\in\R^d$ the position of the $n$-th passive agent and set
\begin{equation*}
        \bfx^N(t)\coloneqq \bigl(\bfx^{N,1}(t),\dots,\bfx^{N,N}(t)\bigr)\in (\R^d)^N.
\end{equation*}
The controllable agents have positions $\bfy_m(t)\in\R^d$, $m=1,\dots,M$, collected in $\bfy(t)\in (\R^d)^M$, and are driven by controls $\control_m\colon[0,T]\to U_m$, where $U_m\subset\R^d$ is compact and convex. The interactions among agents are described by kernels $K,f, g\colon\R^d\to\R^d$, which we assume to be odd. The dynamics are given by
\begin{equation}\label{eqn:finite-dimensional-equations}
    \begin{cases}
        \displaystyle
        \dot{x}^{N,n}(t)
        =\frac{1}{N}\sum_{i=1}^N K\bigl(x^{N,i}(t)-x^{N,n}(t)\bigr)
         +\frac{1}{M}\sum_{j=1}^M f\bigl(y_j(t)-x^{N,n}(t)\bigr),
        & \!n=1,\dots,N\\[0.5em]
        \displaystyle
        \dot{y}_m(t)=\frac{1}{M}\sum_{j=1}^Mg\bigl(y_j(t)-y_m(t)\bigr)+\control_m(t), & \!m=1,\dots,M
    \end{cases}
\end{equation}
with initial data $\bfx_0^N\coloneqq \bigl(x^{N,1}_0,\dots,x^{N,N}_0\bigr)$ and $\bfy_0\coloneqq \bigl(y_0^1,\dots,y_0^M\bigr)$.
Define the empirical measure of the non-controllable agents by
\begin{equation*}
        \bfmu^N(t)\coloneqq \frac{1}{N}\sum_{n=1}^N \delta_{\bfx^{N,n}(t)}\in\prob_c(\R^d).
\end{equation*}
It is well known (see, e.g., \cite{burger2021mean}) that $(\bfmu^N,\bfy)$ solves, in the sense of \cref{def: solution}, the coupled PDE–ODE system
\begin{equation}\label{eqn:mean-field-coupled-system}
    \begin{cases}
        \partial_t \mu(t)
        + \nabla_x\cdot\bigl(F[\mu(t)](\cdot,y(t))\,\mu(t)\bigr)=0,\\[0.3em]
        \dot{y}(t)=G[\mu(t)](y(t))+\control(t),
    \end{cases}
\end{equation}
where $\control=(\control_1,\dots,\control_M)$,
\begin{equation*}
        F[\mu](x,y)
    \coloneqq K\ast\mu(x)+\frac{1}{M}\sum_{m=1}^M f\bigl(y_m-x\bigr) \quad \text{ and }\quad G[\mu](y)_m=\frac{1}{M}\sum_{j=1}^Mg(y_j-y_m).
\end{equation*}
With respect to the notation used in the previous sections, we have therefore $c=dM$ and $U=U_1\times...\times U_M$. Under the assumptions of~\cref{theorem: continuous dependence on controls}, if $\mu_0^N\coloneqq \bfmu^N(0)\to\mu_0^\infty$ in $W_2$ then the corresponding solutions $(\bfmu^N,\bfy^N)$ converge uniformly in time to the solution $(\bfmu_\infty,\bfy_\infty)$ of \eqref{eqn:mean-field-coupled-system} starting from $(\mu_0^\infty, y_0)$. This provides a justification of the continuum (mean-field) approximation for large finite-dimensional systems once we assume $\mu_0^\infty$ to be absolutely continuous with respect to the Lebesgue measure on $\R^d$.

Our goal is to split the ensemble of non-controllable agents into $P$ clusters of the same mass at given positions $z_1,...,z_P\in\R^d$. 
Despite its wide applicability, this problem poses significant challenges. 
Indeed, the terminal cost cannot be expressed as a standard expected value but must be expressed as a Wasserstein distance to the desired configuration $\hat\mu\coloneqq\frac{1}{P}\sum_{i=1}^P\delta_{z_i}$ with $z_i\in\R^d$ for $i=1,..,P$. This leads to the optimal control problem
\begin{equation}\label{eqn: finite dimensional OPC}\tag{\ensuremath{\mathbf{OCP_N}}}
    \inf_{\control\in \mathcal{U}}J_N(\control)
    =
    \inf_{\control\in \mathcal{U}}\frac{1}{2}W_2^2\left(\bfmu^N(T;\control),\hat\mu\right),
\end{equation}
where $\mu_0^N\in\probc(\R^d)$, $y_0\in\R^c$ are fixed initial conditions. Unfortunately, the Wasserstein distance is in general not differentiable at empirical measures, which prevents us from using the classical smooth \gls{acr:pmp} in the finite dimensional case.
This observation prompts us to study the optimal control problem in the mean-field regime: 
\begin{equation}\label{eqn: infinite dimensional OPC}\tag{\ensuremath{\mathbf{OCP_\infty}}}
    \inf_{\control\in \mathcal{U}}J_\infty(\control)
    =
    \inf_{\control\in \mathcal{U}}\frac{1}{2}W_2^2\left(\bfmu_\infty(T;\control),\hat\mu\right),
\end{equation}
where  $\mu_0^\infty$ is assumed to be absolutely continuous with respect to the Lebesgue measure on $\R^d$,  so that the Wasserstein-differentiability properties required in the previous section apply. Finally, we transfer the resulting mean-field controls to large but finite agent systems and assess their performance for \eqref{eqn: finite dimensional OPC}.

\subsection{Optimality conditions}

We start by specifying our main results to the splitting scenario.

\begin{theorem}[\gls{acr:pmp} for the splitting problem]\label{thrm:splitting gradient}
Given a control $\control^\star\in\U$ we denote by $\bfphi^\star_\infty$, $\bfmu^\star_\infty$ and $\bfy^\star_\infty$ the corresponding flow, curve in $\prob_c(\R^d)$ and controlled trajectory, respectively. The gradient of $J_\infty$ with respect to the control at $\control^\star$ is given by $\bfq^\star$,
where $(\bfp^\star,\bfq^\star)$ solves the backward adjoint equation
\begin{equation}\label{eqn: explicit adjoint}
\begin{split}
    -\partial_t p(t,x) =&-\int_{\R^d}DK(\zeta-\bfphi^\star_\infty(t,x))^\top p(t,x)\dd(\bfmu^\star_\infty(t))(\zeta)
    \\
    &-\frac{1}{M}\sum_{m=1}^M Df(\bfy_{\infty,m}^\star(t)-\bfphi^\star_\infty(t,x))^\top p(t,x)
    \\
    &+\int_{\R^d} DK\bigl(\bfphi^\star_\infty(t,\zeta)-\bfphi^\star_\infty(t,x)\bigr)^\top p(t,\zeta) \dd\mu_0(\zeta)
    \\
    -\dot q_m(t) =& \frac{1}{M}\int_{\R^d}Df(\bfy^\star_{\infty,m}(t)-\bfphi^\star_\infty(t,\zeta))^\top p(t,\zeta)\dd\mu_0(\zeta)
    \\
    &+\frac{1}{M}\sum_{j=1}^M Dg\bigl(\bfy^\star_{\infty,j}(t)-\bfy^\star_{\infty,m}(t)\bigr)^\top\bigl(q_j(t)-q_m(t)\bigr),
\end{split}
\end{equation}
with terminal condition $p(T)=\bfphi^\star_\infty(T,\cdot)-\mathcal T(\bfphi^\star_\infty(T,\cdot))$ and $q(T)=0$,
where $\mathcal T = \mathcal T_{\bfmu^\star_\infty(T)}^{\hat\mu}$ is the unique optimal transport map from $\bfmu^\star_\infty(T)$ to $\hat\mu$. Moreover, if $\control^\star$ is (locally) optimal then, for almost every $t\in[0,T]$, the following optimality condition holds
    \begin{equation*}
        \sum_{m=1}^M\bfq^\star_m(t)\cdot \control^\star _m(t)=\min_{\omega\in U}\sum_{m=1}^M\bfq_m^\star(t)\cdot \omega_m.
    \end{equation*}
\end{theorem}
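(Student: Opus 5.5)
The plan is to obtain the statement as a direct specialization of \cref{thrm: PMP}. First I verify its hypotheses for the splitting model. With $F[\mu](x,y)=K\ast\mu(x)+\frac1M\sum_m f(y_m-x)$ and $G[\mu](y)_m=\frac1M\sum_j g(y_j-y_m)$, I assume $K,f,g$ are odd, $C^1$, and globally Lipschitz, or at least Lipschitz on the bounded sets supplied by the a priori bounds of \cref{thrm: well posedness}. Then \eqref{eqn: lipschitz continuity} and \eqref{eqn: sublinear growth} hold, using $\norm{K\ast\mu_1(x)-K\ast\mu_2(x)}\le \mathrm{Lip}(K)\,W_1(\mu_1,\mu_2)\le \mathrm{Lip}(K)\,W_2(\mu_1,\mu_2)$. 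For H1)--H3) I use \cref{proposition: differentiability of functionals}(1) applied to $\zeta\mapsto K(\zeta-x)$, which gives $D_\mu F[\mu]_{(x,y)}(\zeta)=DK(\zeta-x)$. I also record
\[
D_xF[\mu](x,y)=-\int DK(\zeta-x)\,\dd\mu(\zeta)-\frac1M\sum_m Df(y_m-x),\qquad D_{y_m}F=\frac1M Df(y_m-x),
\]
and note that $G$ does not depend on $\mu$, so $D_\mu G=0$, while $D_yG$ has blocks built from $Dg$. All of these are continuous, so H1)--H3) hold.

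Next I handle the terminal cost $\psi=\frac12W_2^2(\cdot,\hat\mu)$. Since $\mu_0^\infty$ is absolutely continuous, \cref{thrm: well posedness}(ii) shows that $\bfmu^\star_\infty(T)$ is absolutely continuous. Brenier's theorem then gives a unique optimal map $\mathcal T$, and \cref{proposition: differentiability of functionals}(2) gives $D_\mu\psi[\bfmu^\star_\infty(T)]=\Id-\mathcal T$. The terminal condition $p(T)=\bfphi^\star_\infty(T,\cdot)-\mathcal T(\bfphi^\star_\infty(T,\cdot))$, $q(T)=0$ follows, and since $W_2$ is continuous, $\psi$ is lower semicontinuous. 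The $\varepsilon$-expansion in \cref{thrm: PMP} then identifies $\bfq^\star$ as the $L^2$ gradient of $J_\infty$. The optimality condition is literally the PMP condition once it is written blockwise over $m$ with $U=U_1\times\dots\times U_M$.

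The remaining work, and the only real obstacle, is the bookkeeping needed to check that the general adjoint formulas for $\A_w^*$ and $\A_v^*$ reduce to \eqref{eqn: explicit adjoint}. In $\A_w^*$, the term $D_xF^\top p(x)$ produces the first two lines once $\int\cdot\,\dd\bfmu^\star_\infty(t)$ is kept. The nonlocal term $\int D_\mu F[\bfmu^\star]_{(\bfphi^\star(t,\zeta),y)}(\bfphi^\star(t,x))^\top p(\zeta)\,\dd\mu_0(\zeta)$ equals $\int DK(\bfphi^\star(t,x)-\bfphi^\star(t,\zeta))^\top p(\zeta)\,\dd\mu_0$. Because $K$ is odd, $DK$ is even, so this matches the third line as written. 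The $D_\mu G$ term vanishes.

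For $\A_v^*$, the $m$-th block of $\int D_yF^\top p$ gives the $Df$ integral. For $D_yG^\top q$ I compute
\[
\partial_{y_k}G_m=\frac1M Dg(y_k-y_m)\quad (k\ne m),\qquad \partial_{y_m}G_m=-\frac1M\sum_{j\ne m}Dg(y_j-y_m).
\]
Taking transposes, the $m$-th component becomes $\frac1M\sum_j Dg(y_m-y_j)^\top q_j-\frac1M\sum_j Dg(y_j-y_m)^\top q_m$. Evenness of $Dg$, again from oddness of $g$, yields $\frac1M\sum_j Dg(y_j-y_m)^\top(q_j-q_m)$. With these sign and evenness checks done, the statement follows from \cref{thrm: PMP}.
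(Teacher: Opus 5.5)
Your proposal is correct and follows the same route as the paper. You verify the hypotheses of \cref{thrm: PMP} for the splitting model. You use absolute continuity of $\bfmu^\star_\infty(T)$ together with \cref{proposition: differentiability of functionals} to obtain the terminal condition $\Id-\mathcal T$. You obtain the explicit adjoint by computing $\A^*$ directly. Your checks of evenness of $DK$ and $Dg$ (from oddness of $K$ and $g$) and your blockwise computation of $D_yG^\top q$ supply the ``straightforward computations'' that the paper leaves to the reader.
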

\begin{proof}
The adjoint equation \eqref{eqn: explicit adjoint} is obtained from \eqref{eqn: abstract adjoint linearized system} through straightforward computations; see also \cref{proposition: differentiability of functionals} for the Wasserstein gradient of the vector field $F$. Given that $\bfmu^\star_\infty(T)$ is absolutely continuous with respect to the Lebesgue measure, we know from Proposition \ref{proposition: differentiability of functionals} that $\psi$ is Wasserstein differentiable at $\bfmu^\star_\infty(T)$ with Wasserstein gradient $\mathrm{Id}-\mathcal T$, where $\mathcal T$ is the unique optimal transport map between $\bfmu^\star_\infty(T)$ and $\hat\mu$. Therefore, the assumptions of \cref{thrm: PMP} are satisfied.
\end{proof}

\subsubsection{Comparison of \eqref{eqn: finite dimensional OPC} and \eqref{eqn: infinite dimensional OPC}}\label{subsec:performance_guarantees}

Here we discuss some compatibility results between the mean-field setting and the setting with the $N$-agent setting. Throughout this subsection we assume that the empirical initial measures are supported in $B(0,R)$ for some $R>0$ and that $\mu_0^N\to\mu_0^\infty$ in $W_2$. We first prove via $\Gamma$-convergence that optimal finite-dimensional controls converge (up to subsequences) to an optimal mean-field control.

\begin{proposition}
    If $(\control_N)\seq$ is a sequence of optimal controls for the finite-dimensional optimal control problem \eqref{eqn: finite dimensional OPC}, then there exists a subsequence that converges weakly to an optimal control for \eqref{eqn: infinite dimensional OPC}.
\end{proposition}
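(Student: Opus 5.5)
The plan is a standard $\Gamma$-convergence/direct-method argument built on \cref{theorem: continuous dependence on controls} and the continuity of $\nu\mapsto\frac12 W_2^2(\nu,\hat\mu)$ in $W_2$.

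\emph{Compactness.} Since $U$ is compact and convex, $\U$ is bounded, convex and closed in $L^2([0,T],\R^c)$, hence weakly sequentially compact. We can therefore extract a subsequence, not relabeled, with $\control_N\rightharpoonup\bar\control\in\U$.

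\emph{Liminf inequality.} The initial data satisfy $\supp\mu_0^N\subset B(0,R)$, $\mu_0^N\to\mu_0^\infty$ in $W_2$, and $y_0$ is fixed. Hence \cref{theorem: continuous dependence on controls}, applied with controls $\control_N$ and initial conditions $(\mu_0^N,y_0)$, yields $\bfmu^N(T;\control_N)\to\bfmu_\infty(T;\bar\control)$ in $W_2$. The map $\nu\mapsto\frac12W_2^2(\nu,\hat\mu)$ is continuous in $W_2$ by the triangle inequality. Therefore
\begin{equation*}
J_N(\control_N)\to J_\infty(\bar\control).
\end{equation*}
This gives convergence, which is stronger than the liminf inequality.

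\emph{Recovery sequence.} Fix an arbitrary $\control\in\U$ and take the constant sequence $\control_N\equiv\control$. The same theorem (strong convergence implies weak convergence) gives $J_N(\control)\to J_\infty(\control)$.

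\emph{Conclusion.} By optimality of $\control_N$ we have $J_N(\control_N)\le J_N(\control)$ for every $N$. Passing to the limit along the subsequence gives
\begin{equation*}
J_\infty(\bar\control)=\lim_N J_N(\control_N)\le\lim_N J_N(\control)=J_\infty(\control).
\end{equation*}
Since $\control\in\U$ was arbitrary, $\bar\control$ is optimal for \eqref{eqn: infinite dimensional OPC}.

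\emph{Main obstacle.} The only delicate point is checking that \cref{theorem: continuous dependence on controls} really applies with \emph{simultaneously} varying initial measures and weakly converging controls. This requires that $F,G$ from the splitting model satisfy \eqref{eqn: lipschitz continuity} and \eqref{eqn: sublinear growth}, at least on the bounded region furnished by the a priori bounds of \cref{thrm: well posedness}. I will assume $K,f,g$ are Lipschitz. Then $K\ast\mu$ is Lipschitz in $\mu$ with respect to $W_1\le W_2$, and the remaining terms are Lipschitz in $(x,y)$, so both assumptions follow from this.
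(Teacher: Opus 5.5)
Your proposal is correct and follows the paper's argument. Both rest on \cref{theorem: continuous dependence on controls} together with the $W_2$-continuity of $\nu\mapsto\frac12W_2^2(\nu,\hat\mu)$ to obtain $J_N(\control_N)\to J_\infty(\bar\control)$ along weakly convergent sequences, and both use the constant recovery sequence and the weak compactness of $\U$. The differences are only in presentation: the paper packages this as $\Gamma$-convergence plus equi-coercivity and cites the fundamental theorem of $\Gamma$-convergence, whereas you unpack that theorem directly via $J_N(\control_N)\le J_N(\control)$, and you also state explicitly the Lipschitz hypotheses on $K,f,g$ that the paper leaves implicit and that put the splitting model under \eqref{eqn: lipschitz continuity} and \eqref{eqn: sublinear growth}.
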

\begin{proof}
The terminal cost $\mu\mapsto \tfrac12 W_2^2(\mu,\hat\mu)$ is continuous on $(\mathcal P_2(\R^d),W_2)$ and, by
\cref{theorem: continuous dependence on controls}, the terminal state depends continuously on the control and on the initial
data. In particular, for every sequence $(\control_N)_{N\in\N}\subset \U$ converging weakly in $L^2([0,T];\R^c)$ to some
$\control^\star\in \U$, one has
\begin{equation}\label{eqn: gamma limit}
\lim_{N\to\infty}\frac12\,W_2^2\bigl(\bfmu^N(T;\control_N),\hat\mu\bigr)
=
\frac12\,W_2^2\bigl(\bfmu_\infty(T;\control^\star),\hat\mu\bigr).
\end{equation}

From \eqref{eqn: gamma limit} we obtain the $\Gamma$-convergence of $J_N$ to $J_\infty$ with respect to the weak $L^2$ topology
on $\U$. Indeed, if $\control_N\rightharpoonup \control^\star$ in $L^2$, then \eqref{eqn: gamma limit} yields
\begin{equation*}
    J_\infty(\control^\star)
    =\lim_{N\to\infty}J_N(\control_N)\leq \liminf_{N\to\infty}J_N(\control_N),
\end{equation*}
which is the $\Gamma$-$\liminf$ inequality. The $\Gamma$-$\limsup$ inequality follows by choosing, for any fixed
$\control\in \U$, the constant recovery sequence $\control_N\equiv \control$, so that
\begin{equation*} \limsup_{N\to\infty}J_N(\control_N)=\lim_{N\to\infty}J_N(\control)=J_\infty(\control).
\end{equation*}

Moreover, since $\U$ is weakly compact in $L^2$, the family $(J_N)_N$ is equi-coercive. Therefore, by the fundamental theorem of
$\Gamma$-convergence, any sequence of minimizers $(\control_N)_N$ admits a weakly convergent subsequence whose limit is a
minimizer of $J_\infty$, i.e.\ an optimal control for \eqref{eqn: infinite dimensional OPC}.
\end{proof}

Our second result is a performance guarantee when the mean-field control is applied to the finite-dimensional system. We note that the constant $C$ in the following statement arises from Grönwall estimates and thus depends exponentially on the terminal time, effectively limiting the practical utility of this bound to short horizons.

\begin{proposition}
There exists $C>0$, depending only on $R$, $U$, $\mathbf C$ and $T$, such that for every $\control\in\U$, $$\bigl|J_N(\control)-J_\infty(\control)\bigr|\le C\,W_2(\mu_0^N,\mu_0^\infty).$$
In particular, if $\control_\infty^\star$ minimizes $J_\infty$ and $\control_N^\star$ minimizes $J_N$, then $$0\leq J_N(\control_\infty^\star)-J_N(\control_N^\star)\leq 2C\,W_2(\mu_0^N,\mu_0^\infty).$$
\end{proposition}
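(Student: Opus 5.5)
The plan is to combine the Lipschitz dependence on initial data (\cref{theorem: Lipschitz dependence on initial data}) with the Lipschitz continuity of the terminal cost $\mu\mapsto\tfrac12 W_2^2(\mu,\hat\mu)$ on measures supported in a fixed ball.

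\emph{Step 1: uniform bound on the supports.} Fix $\control\in\U$. Both $(\bfmu^N,\bfy^N)$ and $(\bfmu_\infty,\bfy_\infty)$ solve \eqref{eqn:mean-field-coupled-system} with the same control and the same $y_0$, and their initial measures are supported in $B(0,R)$. The support of $\mu_0^\infty$ is also in $B(0,R)$, because it is a $W_2$-limit of measures supported in the closed ball. Enlarging $R$ if needed so that $\norm{y_0}\le R$, \cref{thrm: well posedness}(i) gives a radius $\bar R=\bar R(R,U,\mathbf C,T)$ with $m_\infty(\bfmu^N(T;\control))\le\bar R$ and $m_\infty(\bfmu_\infty(T;\control))\le\bar R$, uniformly in $N$ and $\control$.

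\emph{Step 2: stability of the terminal state.} By \cref{theorem: Lipschitz dependence on initial data}, with the same control and the same $y_0$,
\[
W_2\bigl(\bfmu^N(T;\control),\bfmu_\infty(T;\control)\bigr)\le L\,W_2(\mu_0^N,\mu_0^\infty),
\]
where $L=L(R,\mathbf C,U,T)$ does not depend on $\control$.

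\emph{Step 3: Lipschitz bound for the cost.} For $\mu,\nu$ supported in $B(0,\bar R)$, write
\[
\bigl|W_2^2(\mu,\hat\mu)-W_2^2(\nu,\hat\mu)\bigr|=\bigl|W_2(\mu,\hat\mu)-W_2(\nu,\hat\mu)\bigr|\,\bigl(W_2(\mu,\hat\mu)+W_2(\nu,\hat\mu)\bigr).
\]
By the triangle inequality the first factor is at most $W_2(\mu,\nu)$. Set $r=\max_i\norm{z_i}$. The second factor is at most $2(\bar R+r)$. Since $\hat\mu$ is fixed, $r$ is absorbed into the constant. Combining with Step 2 gives $|J_N(\control)-J_\infty(\control)|\le (\bar R+r)L\,W_2(\mu_0^N,\mu_0^\infty)$, so one can take $C=(\bar R+r)L$.

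\emph{Step 4: the performance guarantee.} Nonnegativity is immediate, since $\control_N^\star$ minimizes $J_N$. For the upper bound, decompose
\[
J_N(\control_\infty^\star)-J_N(\control_N^\star)=\bigl[J_N(\control_\infty^\star)-J_\infty(\control_\infty^\star)\bigr]+\bigl[J_\infty(\control_\infty^\star)-J_\infty(\control_N^\star)\bigr]+\bigl[J_\infty(\control_N^\star)-J_N(\control_N^\star)\bigr].
\]
The middle bracket is $\le0$ because $\control_\infty^\star$ minimizes $J_\infty$. Each outer bracket is bounded by $C\,W_2(\mu_0^N,\mu_0^\infty)$ by the first claim applied to $\control_\infty^\star$ and to $\control_N^\star$.

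\emph{Main obstacle.} The only delicate point is making sure $C$ is uniform in $\control$ and $N$. This is exactly what the a priori bound in \cref{thrm: well posedness} and the control-independent constant in \cref{theorem: Lipschitz dependence on initial data} provide. A secondary point is that $C$ also depends on $\hat\mu$, through $r$, and on $\norm{y_0}$; both are fixed data, so this dependence should be stated explicitly or absorbed into $R$.
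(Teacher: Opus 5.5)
Your proposal is correct and follows essentially the same route as the paper: you use the a priori support bound from \cref{thrm: well posedness}, the factorization $\tfrac12|a^2-b^2|=\tfrac12(a+b)|a-b|$ together with the triangle inequality, the control-independent constant of \cref{theorem: Lipschitz dependence on initial data}, and the same three-term decomposition for the suboptimality gap. Your explicit tracking of $r=\max_i\norm{z_i}$ is slightly more careful than the paper, whose bound $a,b\le 2\bar R$ tacitly assumes that the targets lie in $B(0,\bar R)$.
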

\begin{proof}
Fix a control $\control\in\U$. By the a-priori support estimates in \cref{thrm: well posedness}, there is $\bar R $ such that
$\supp(\bfmu^N(T;\control)),\supp(\bfmu_\infty(T;\control))\subset B(0,{\bar R})$ for all $N\in\N$.
Let us introduce $a\coloneqq W_2\bigl(\bfmu^N(T;\control),\hat\mu\bigr)$ and $b\coloneqq W_2\bigl(\bfmu_\infty(T;\control),\hat\mu\bigr)$,
then $a,b\le 2\bar R$ and
\begin{equation*}
|J_N(\control)-J_\infty(\control)|
=\tfrac12|a^2-b^2|
=\tfrac12(a+b)|a-b|
\le 2\bar R\,W_2\bigl(\bfmu^N(T;\control),\bfmu_\infty(T;\control)\bigr).
\end{equation*}
Finally, \cref{theorem: Lipschitz dependence on initial data} yields
$W_2\bigl(\bfmu^N(T;\control),\bfmu_\infty(T;\control)\bigr)\le L\,W_2(\mu_0^N,\mu_0^\infty)$,
so the uniform estimate holds with $C\coloneqq 2\bar R L$. For the suboptimality gap we have
\begin{equation*}
    J_N(\control_\infty^\star)-J_N(\control_N^\star)
\le \bigl(J_N(\control_\infty^\star)-J_\infty(\control_\infty^\star)\bigr)
     +\bigl(J_\infty(\control_N^\star)-J_N(\control_N^\star)\bigr)
\le 2C\,W_2(\mu_0^N,\mu_0^\infty),
\end{equation*}
where we used $J_\infty(\control_\infty^\star)\le J_\infty(\control_N^\star)$ and the uniform bound above.
\end{proof}

\subsection{Link with the finite-dimensional \gls{acr:pmp} in the smooth case}\label{subsec:finite-dimensional PMP}

To clarify the relation between the finite-dimensional and mean-field \glspl{acr:pmp}, we consider a setting in which the classical \gls{acr:pmp} is well posed already at the particle level, namely a smooth terminal cost of expected-value type. Let $$\psi(\mu)=\int_{\R^d}\hat\psi(x)\dd\mu(x),$$
with $\hat{\psi}$ as in \cref{proposition: differentiability of functionals}. For the $N$-particle system, the terminal cost reads
\begin{equation}\label{eq:finite_dim_expected_value}
    \psi(\bfmu^N(T;\control))=\frac1N\sum_{n=1}^N \hat\psi(\bfx^{N,n}(T;\control)).
\end{equation}
In this case, the finite-dimensional optimal control problem \eqref{eqn: finite dimensional OPC} admits the classical PMP, with terminal adjoint condition $p_n(T)=\nabla \hat\psi(x^{N,n}(T)) / N$, $n=1,\dots,N$ and the corresponding first-order optimality condition for the control. After the natural $1/N$ rescaling of the non-controlled costates, the finite-dimensional adjoint system is consistent with the infinite-dimensional adjoint equation in~\cref{thrm: PMP} in the empirical distribution setting. In particular, passing to the mean-field limit in the (rescaled) finite-dimensional \gls{acr:pmp} yields the same PDE--ODE optimality system as deriving the \gls{acr:pmp} directly at the measure level.
For instance, in~\cite{bongini2017mean}, a mean-field \gls{acr:pmp} is obtained by first writing the smooth finite-dimensional \gls{acr:pmp} and then passing to the mean-field limit in the optimality system.
This strategy fails when the terminal cost is not differentiable on empirical measures, as in the case of the \mbox{Wasserstein distance.}

Our analysis is instead carried out directly at the level of probability measures: we derive first-order optimality conditions for the limiting PDE--ODE control problem without requiring the finite-dimensional approximations to admit a classical (smooth) \gls{acr:pmp}. This, in turn, allows us to work in greater generality and handle terminal costs such as the Wasserstein distance and more general optimal transport objectives. 
Note, moreover, that our measure-level \gls{acr:pmp}, unlike~\cite{bongini2017mean}, provides necessary conditions satisfied by \emph{all} mean-field optimal controls, including those that are not limits of particle-level optimal controls. Nonetheless, the performance of mean-field optimal controls can be related to large but finite multi-agent systems through the performance guarantees in \Cref{subsec:performance_guarantees}, which readily extend to terminal costs that are locally Lipschitz continuous in $W_2$.

\subsection{Algorithm and implementation}\label{subsec:algorithm}

To numerically solve the optimal control problem, we use gradient descent. We evaluate the gradient of the terminal cost with respect to the input via \cref{thrm:splitting gradient}, as we detail below. The pseudo-code for our algorithm is summarized in~\cref{alg:alg}. We intend this section as a proof-of-concept implementation of our results, and leave a rigorous convergence and numerical analysis to future work.

\paragraph{Computation of the gradient}
We adopt an optimize-then-discretize scheme: We discretize space and time to compute the analytic gradient, instead of computing gradients of discretized problems. We do so in three steps. First, we use the current control $\control$ and solve the forward equation~\eqref{eqn:mean-field-coupled-system} with a forward Euler scheme in time (with discretization $\Delta t>0$) and a conservative finite-volume discretization in space on Cartesian cells of side $\Delta x>0$, using a local Lax--Friedrichs \cite{leveque2002finite} numerical flux to approximate the divergence operator. Under a CFL condition, this monotone update preserves positivity and is conservative up to machine precision. When solving the system, we also keep track of the flow $\bfphi(t,\cdot)$; as a result, we obtain the trajectory $(\bfmu(t), \bfphi(t,\cdot), \bfy(t))_{t\in[0,T]}$ and, in particular, the terminal distribution $\bfmu(T)$.
Second, we compute the optimal transport map between the terminal distribution $\bfmu(T)$ and the target distributions $\hat\mu$; we will provide more details on this step when presenting our two-dimensional implementation. 
This way, we obtain the terminal condition for the backward equation.
Third, we solve the backward equation~\eqref{eqn: explicit adjoint} using a forward Euler scheme in time (again, with discretization $\Delta t>0$). This equation~\eqref{eqn: explicit adjoint} is parametric in $x$ and involves the computation of various ``spatial'' integrals; thus, we discretize the space in cubes of side $\Delta x$, and use the solution $(\bfmu(t), \bfphi(t,\cdot), \bfy(t))_{t\in[0,T]}$ obtained with the forward solver. This yields the costate $(\bfp(t,\cdot), \bfq(t))_{t\in[0,T]}$. By \cref{thrm:splitting gradient}, $(\bfq(t))_{t\in[0,T]}$ is the gradient of the terminal cost with respect to the control input.
Finally, we highlight that we solve the adjoint in $L^2$ without needing to resort to the solution of the adjoint equation in the Wasserstein space, as done in~\cite{pmpws} which ``doubles the dimension'' of the space, see also~\Cref{subsection: discussion}. Thus, solving the backward adjoint equation is ``as expensive as'' solving the forward equation.

\paragraph{Gradient descent}
Once the gradient is computed, we update the control input with a standard projected gradient descent with a step size of $\gamma>0$. The projection ensures that the control belongs to $\mathcal U$, which, here we take as the set of inputs whose agent-wise Euclidean norm does not, at all times, exceed $u_\mathrm{max}>0$.
To improve stability, we additionally apply an agent-wise normalization of the gradient so that each controlled agent has the same gradient magnitude. To ensure cost decrease we complement gradient descent with Armijo backtracking. We stop the algorithm when a maximum number of iterations is reached or when the terminal cost has not changed more than a given tolerance. 

\begin{algorithm}[h]
\small 
\caption{Projected gradient descent}\label{alg:alg}
\begin{algorithmic}[1] 
    \Require $T>0$ (time horizon), $\Delta x, \Delta t > 0$ (discretization), $\hat\mu$ (target distribution), $\mu_0,\bfy_0$ (initial condition), $u_\mathrm{max}$ (maximum control input), $\gamma>0$ (step size), stopping criteria (e.g., tolerance), $\control$ initial control
    \Require $\Call{Forward}{}, \Call{Backward}{}$ (solvers), $\Call{OptimalTransportMap}{}$ (OT map), $\Call{NormalizeGradient}{}$ (optional gradient norm.), $\Call{Project}{}$ (projector)
    \While{not converged}
        \State \textbf{\# compute gradient}
        \State $(\bfmu(t),\bfphi(t,\cdot),\bfy(t))_{t\in\{0,\Delta t, \ldots\}} \gets \Call{Forward}{\control, \mu_0, \bfy_0, T, \Delta t, \Delta x}$
        \State $\mathcal T\gets \Call{OptimalTransportMap}{\bfmu(T),\hat\mu,\Delta x}$
        \State $(\bfp(t,\cdot), \bfq(t))_{t\in\{0,\Delta t, \ldots\}} \gets \Call{Backward}{(\bfmu(t),\bfphi(t,\cdot),\bfy(t))_{t\in\{0,\Delta t, \ldots\}}, \mathcal T, T, \Delta t, \Delta x}$
        \State \textbf{\# gradient step}
        \State $(\bfq(t))_{t\in\{0,\Delta t, \ldots\}} \gets \Call{NormalizeGradient}{(\bfq(t))_{t\in\{0,\Delta t, \ldots\}}}$\Comment{optional grad. norm.}
        \State $\control \gets \Call{Project}{\control - \gamma\bfq, u_\mathrm{max}}$
        \If{converged}
            \State break 
        \EndIf
    \EndWhile
    \State \textbf{return} $\control$ 
\end{algorithmic}
\end{algorithm}

\paragraph{Our implementation}
We implement this scheme for a two-dimensional example. 
We use $\Delta t=0.005$ and $\Delta x=0.05$.
We use a target configuration with two target points, each associated with half of the mass of the distribution. For this case, the computation of the optimal transport map amounts to identifying the line (given its slope) that divides the distribution into two of equal mass (e.g., see \cite[Exercise A.3]{figalli2021invitation}). This can be done efficiently using a standard bisection scheme.
Finally, we use a step size of 0.1. 
Our implementation is in Python using JAX; open-source code can be found at \href{https://github.com/nicolaslanzetti/wasserstein-sparse-optimal-control}{\texttt{https://github.com/nicolaslanzetti/wasserstein-sparse-optimal-control}}.

\subsection{Numerical case study}\label{subsec:case study}

\paragraph*{General setting} 

We consider a setting where the control goal is, within a time horizon of $T=1.5$, to split a population $\mu_0$ of agents whose state is their position in the plane, into two of equal mass at the target locations $(0,-1)$ and $(0,1)$. The population is initially distributed according to a Gaussian centered at the origin, with standard deviation $1.2$, and truncated so that all the mass lies in the ball of radius $0.8$; see~\cref{fig:optimal_trajectories}.
The vector fields in dynamics~\eqref{eqn:mean-field-coupled-system} are $K(z)=-h_{k_{\mu,\mathrm{a}},\sigma_{\mu,\mathrm{a}}}(\norm{z})z+h_{k_{\mu,\mathrm{r}},\sigma_{\mu,\mathrm{r}}}(\norm{z})z$ (we take $\sigma_{\mu,\mathrm{r}}<\sigma_{\mu,\mathrm{a}}$, so that the interaction $K$ is repulsive at short ranges and attractive at larger distances.),  $f(z)=h_{k,\sigma}(\norm{z})z$ (repulsive), and $g(z)=-h_{k_y,\sigma_y}(\norm{z})z$ (attractive), where $h_{k,\sigma}(d)=-k\exp(-\frac{d^2}{2\sigma^2})$ is a Gaussian kernel. 
For our experiments, we used
$k_{\mu,\mathrm{a}}=3$,
$\sigma_{\mu,\mathrm{a}}=0.25$,
$k_{\mu,\mathrm{r}}=30$,
$\sigma_{\mu,\mathrm{r}}=0.1$,
$k=22$,
$\sigma=0.325$
$k_y=30$,
$\sigma_y=0.1$.
There are $M=6$ controlled agents with the initial position $y_0$ as in~\cref{fig:optimal_trajectories}, with controls bounded by $u_\mathrm{max}=1$.
We compute (locally) optimal controls using \cref{alg:alg}, starting from the initial condition $\control$.
On standard commodity hardware (Intel Core i7-12700H (14C/20T) CPU), the computation takes approximately $10$ minutes for $12$ iterations, i.e., less than $60$ seconds per forward--backward sweep (including the computation of the optimal transport map). Across all runs, we did not observe numerical instabilities. As a practical diagnostic, we monitored the Courant number, which remained below $0.5$ along the optimal trajectory.

\begin{figure}[t]
    
    \centering
    \includegraphics[width=0.24\textwidth]{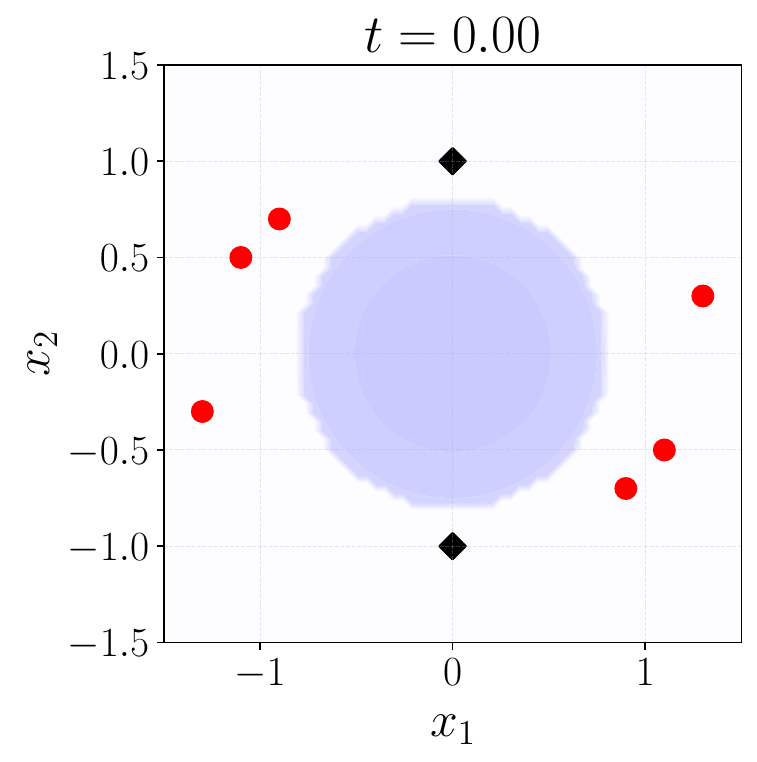}
    \includegraphics[width=0.24\textwidth]{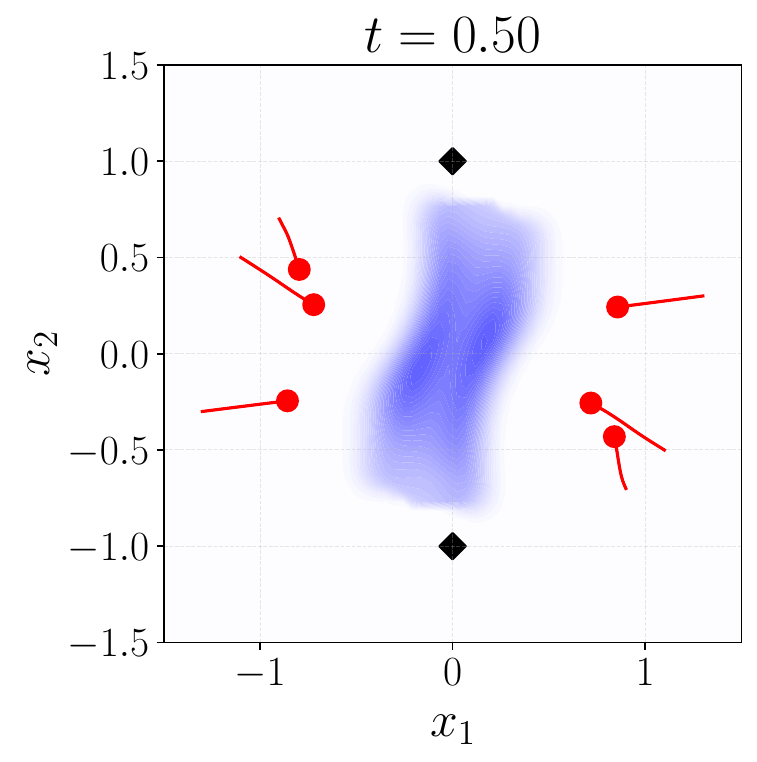}
    \includegraphics[width=0.24\textwidth]{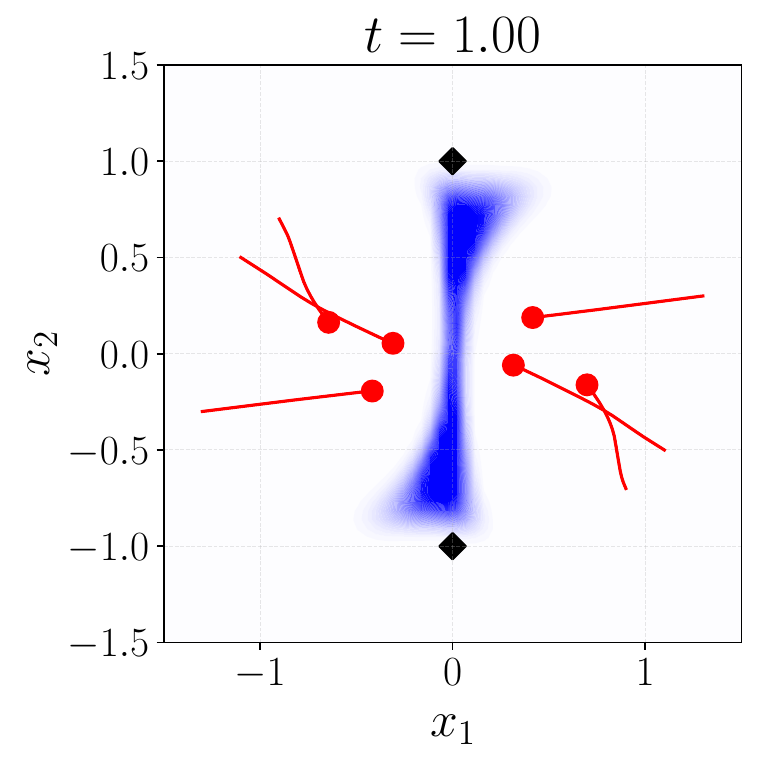}
    \includegraphics[width=0.24\textwidth]{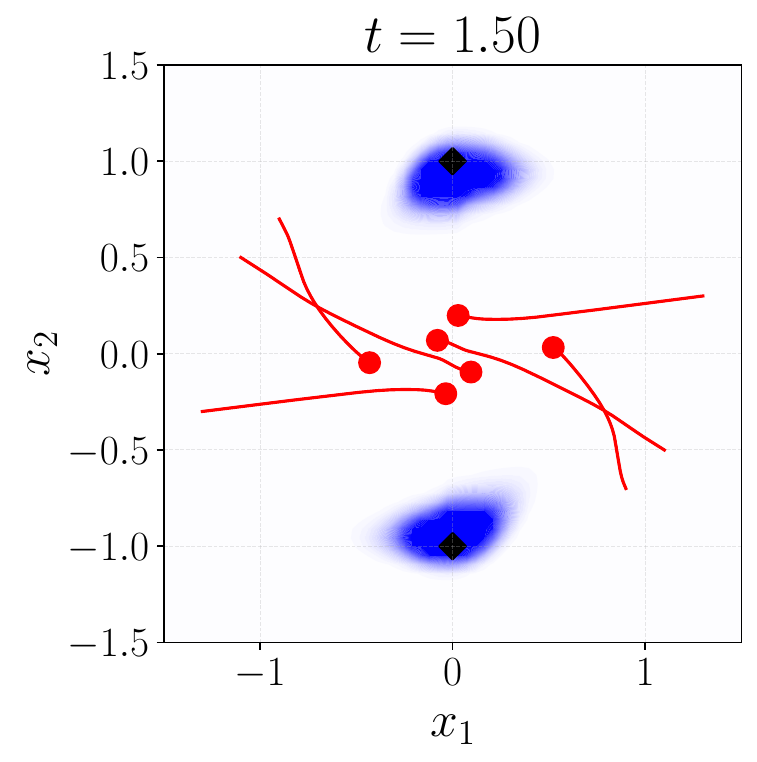}
    \vspace{-2ex}
    \caption{Optimal trajectories of the controlled agents (red circles) that split the distribution (in blue) into two over the desired targets (black diamonds).
    }
    \label{fig:optimal_trajectories}
    \vspace{-2ex}
\end{figure}

\paragraph{Optimal trajectories}
We show the (locally) optimal evolution of the distribution and the (locally) optimal trajectories of the steering agents in~\cref{fig:optimal_trajectories}. We observe that the agents are capable of splitting the distribution into two of equal mass and steer the agents to the desired location. Despite being severely underactuated and having no prior ($\control$ is initialized to zero), with few agents steering a full distribution, the terminal distribution matches the target distribution well.

\paragraph{PMP residual and terminal cost}
\begin{wrapfigure}{r}{0.33\textwidth}
    \vspace{-\baselineskip}
    \centering
    \def\xmaxplot{12}

\begin{tikzpicture}
    \begin{groupplot}[
        group style={
            group size=1 by 2,
            vertical sep=0.2cm,
            xlabels at=edge bottom,
            xticklabels at=edge bottom
        },
        width=0.7\linewidth,
        height=0.5\linewidth,
        scale only axis,
        xmin=1, xmax=\xmaxplot,
        xtick distance=2,          
        ylabel near ticks, 
        xlabel near ticks,
        grid=major,
        every axis plot/.append style={ultra thick} 
    ]

    \nextgroupplot[
        ylabel={PMP residual},
        ylabel style={font=\footnotesize},
        tick label style={font=\scriptsize},
    ]
    \addplot[solid, violet] table [x expr=\coordindex, y index=0] {results/plots_mf/pmp_residual_diagnostics.csv};

    \nextgroupplot[
        ylabel={Control cost},
        xlabel={Iterations},
        ylabel style={font=\footnotesize},
        xlabel style={font=\footnotesize},
        tick label style={font=\scriptsize}
    ]
    \addplot[solid, teal] table [x expr=\coordindex, y index=0] {results/plots_mf/loss_diagnostics.csv};

    \end{groupplot}
\end{tikzpicture}
\end{wrapfigure}
To assess the quality of our solution, we evaluate the \gls{acr:pmp} residual and the terminal cost (i.e., squared Wasserstein distance to the targets at the terminal time) as a function of the iterations of gradient descent. The \gls{acr:pmp} residual is defined, using~\cref{thrm: PMP}, as the relative $L^2$ distance between $\bfq \cdot \control$ and $\min _{\omega\in U}\bfq\cdot \omega$, where $\bfq$ is the costate calculated with the current control $\control$. 
As shown on the right, both the \gls{acr:pmp} residual (blue solid line, left axis) and the terminal cost (red dotted line, right axis) decrease monotonically with the iterations of our gradient descent scheme.
The \gls{acr:pmp} residual is computed on the continuous problem, and so, because of time and space discretization, is not expected to converge close to 0.

\paragraph{Performance in the large-scale but finite system}
To conclude, we evaluate the performance of the mean-field optimal control in a setting where the population consists of a large but finite number of agents. We consider $N=500$ agents, sampled at random from the initial distribution $\mu_0$, and apply the (locally) optimal control obtained above in the mean-field regime.
\begin{figure}[t]
    \centering
    \includegraphics[width=0.24\textwidth]{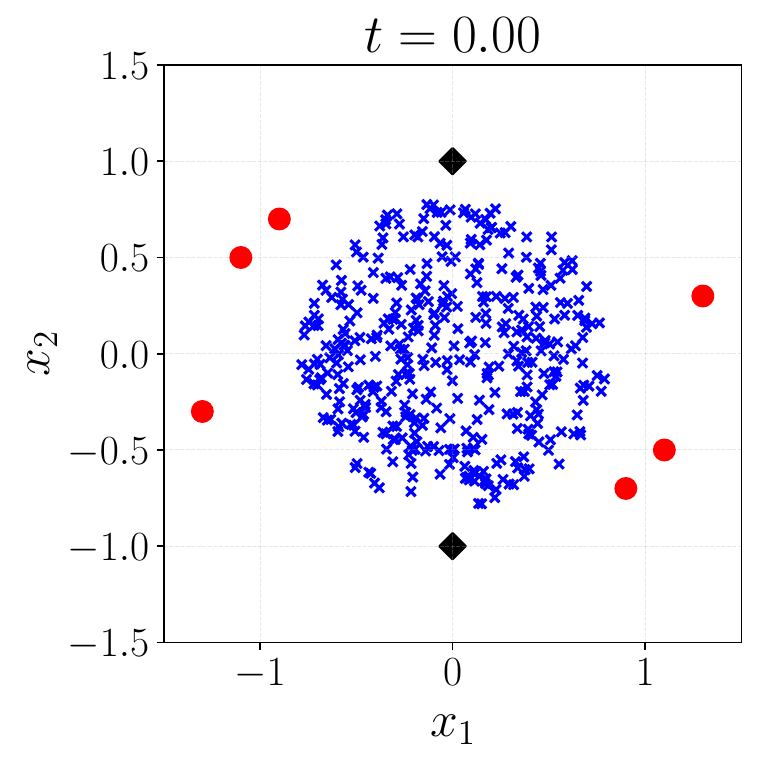}
    \includegraphics[width=0.24\textwidth]{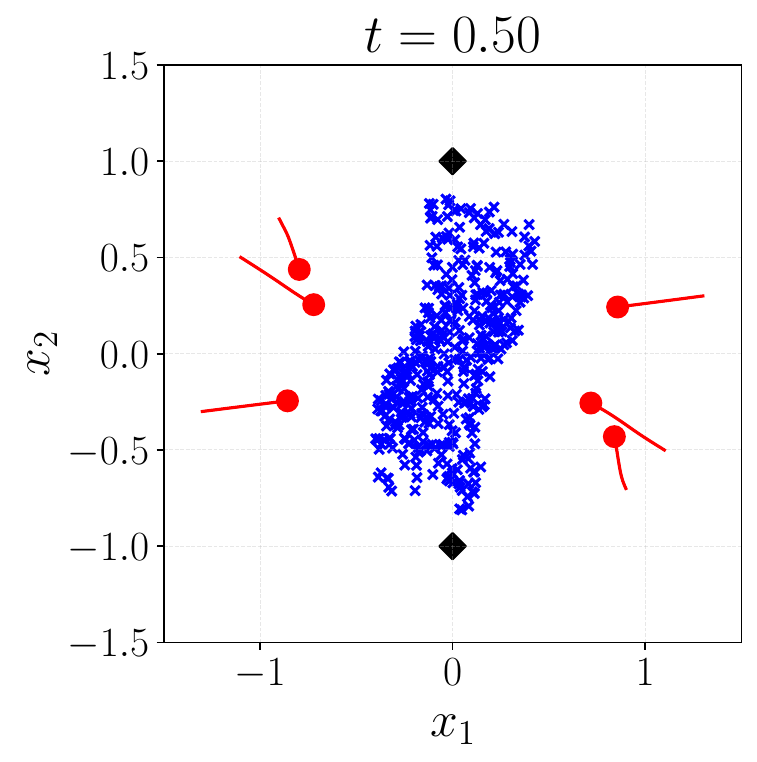}
    \includegraphics[width=0.24\textwidth]{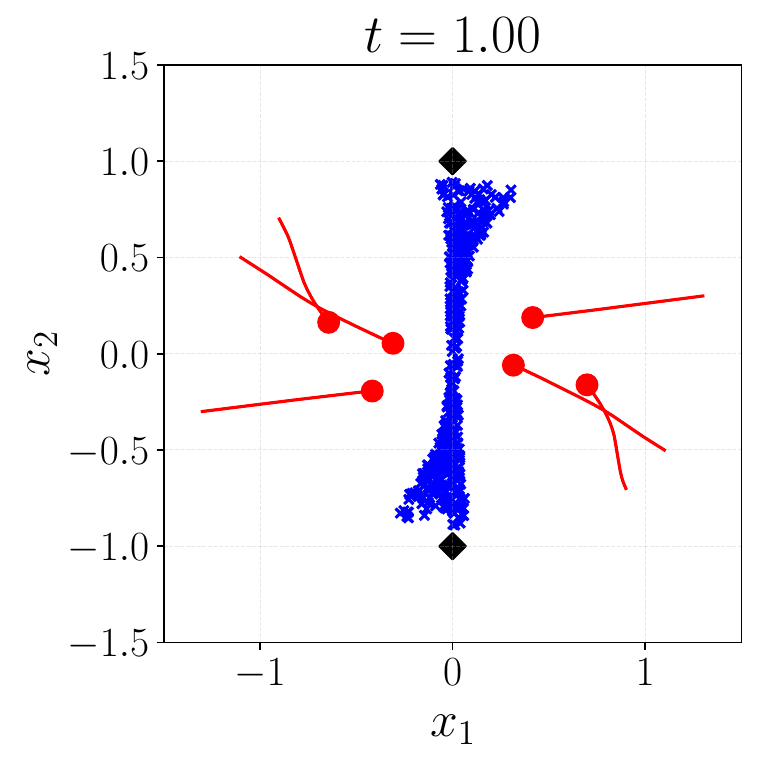}
    \includegraphics[width=0.24\textwidth]{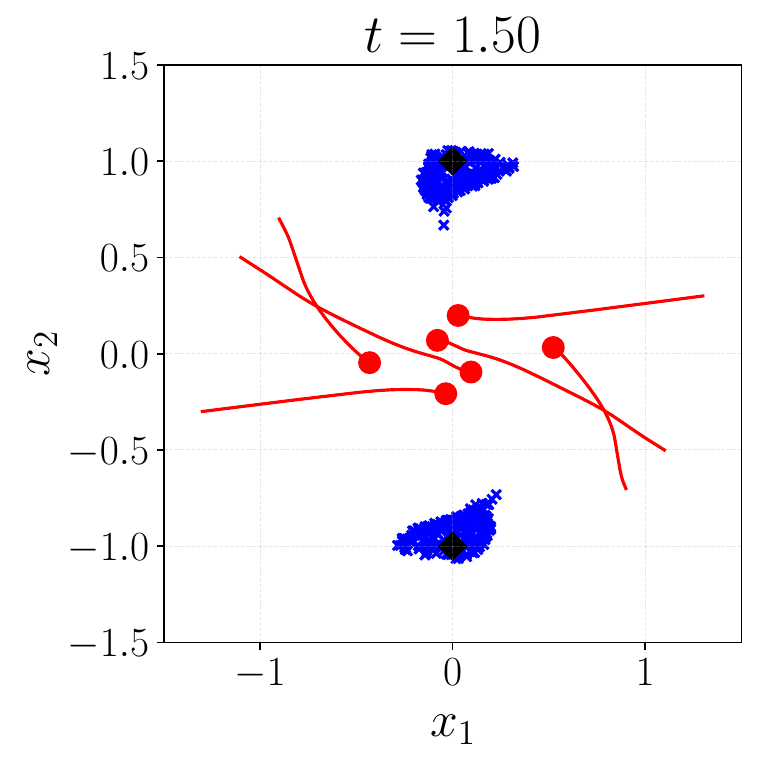}
    \vspace{-2ex}
    \caption{Trajectories in the case of finitely many agents. The behavior closely mimics the one in the mean-field, with successful ``splitting'' of the distribution.}
    \label{fig:finite}
    \vspace{-2ex}
\end{figure}
As shown in~\cref{fig:finite}, the mean-field optimal control successfully splits the population of agents, achieving a terminal cost\footnote{Given the random sampling of the population, we average the results over 50 seeds. The standard deviation is $0.027$.} (i.e., squared Wasserstein distance divided by 2) of $0.050$ which closely matches the $0.023$ obtained in the mean-field regime. 

\bibliographystyle{plain}
\bibliography{bibliography}

\appendix
\section{Technical proofs}\label{sec: technical proofs}

In this appendix, we prove \cref{propo: first order expansion}. We start with a
standard integral-remainder formula for Wasserstein gradients.

\begin{proposition}\label{propo:integral-remainder}
Let $\varphi\colon\prob_2(\R^d)\to\R$, $\mu^\star\in\prob_c(\R^d)$, and $w\in L^\infty(\R^d,\R^d;\mu^\star)$.
Set $\nu^\star=(\Id+w)_\#\mu^\star$ and $\mu^\theta=(\Id+\theta w)_\#\mu^\star$ for $\theta\in[0,1]$.
Assume that $\varphi$ is Wasserstein differentiable and that $(\mu,\zeta)\mapsto D_\mu\varphi[\mu](\zeta)$ is continuous.
Then
\begin{equation*}
    \varphi(\nu^\star)-\varphi(\mu^\star)
    =\int_{\R^d} D_\mu\varphi[\mu^\star](\zeta)\cdot w(\zeta)\,d\mu^\star(\zeta)
    +R(\mu^\star,w),
\end{equation*}
with
\begin{equation*}
    R(\mu^\star,w)
    =\int_0^1\!\int_{\R^d}
    \Bigl(D_\mu\varphi[\mu^\theta]\bigl(\zeta+\theta w(\zeta)\bigr)-D_\mu\varphi[\mu^\star](\zeta)\Bigr)\cdot w(\zeta)\,d\mu^\star(\zeta)\,d\theta.
\end{equation*}
\end{proposition}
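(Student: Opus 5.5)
The plan is to reduce the statement to the one-dimensional fundamental theorem of calculus along the curve $\theta\mapsto\mu^\theta$. Set $g(\theta)\coloneqq\varphi(\mu^\theta)$ for $\theta\in[0,1]$. Then $g(0)=\varphi(\mu^\star)$ and $g(1)=\varphi(\nu^\star)$. If we show that $g$ is continuously differentiable with
\begin{equation*}
  g'(\theta)=\int_{\R^d} D_\mu\varphi[\mu^\theta]\bigl(\zeta+\theta w(\zeta)\bigr)\cdot w(\zeta)\,d\mu^\star(\zeta),
\end{equation*}
then $g(1)-g(0)=\int_0^1 g'(\theta)\,d\theta$. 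Adding and subtracting $\int D_\mu\varphi[\mu^\star](\zeta)\cdot w(\zeta)\,d\mu^\star(\zeta)$, which does not depend on $\theta$, gives exactly the claimed formula with the remainder $R(\mu^\star,w)$.

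\textbf{Step 1: the derivative of $g$.} Fix $\theta$ and let $h$ be small. The plan gives
\begin{equation*}
  \mu^{\theta+h}=\bigl(\Id+\theta w+hw\bigr)_\#\mu^\star .
\end{equation*}
Consider the coupling
\begin{equation*}
  \gamma_h\coloneqq\bigl(\Id+\theta w,\ \Id+(\theta+h)w\bigr)_\#\mu^\star\in\Gamma(\mu^\theta,\mu^{\theta+h}).
\end{equation*}
It is generally not optimal, so we invoke \cref{proposition: Wasserstein gradients are strong}. The cost of this coupling is $(\int\norm{x-y}^2\,d\gamma_h)^{1/2}=|h|\,\norm{w}_{L^2(\mu^\star)}$. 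The proposition then yields
\begin{equation*}
  \varphi(\mu^{\theta+h})-\varphi(\mu^\theta)=h\int D_\mu\varphi[\mu^\theta](\zeta+\theta w(\zeta))\cdot w(\zeta)\,d\mu^\star(\zeta)+o(|h|).
\end{equation*}
Dividing by $h$ and letting $h\to0$ gives the formula for $g'(\theta)$, including the one-sided derivatives at the endpoints.

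\textbf{Step 2: continuity of $g'$.} This is where the continuity hypothesis on $(\mu,\zeta)\mapsto D_\mu\varphi[\mu](\zeta)$ is used, together with boundedness.
\begin{itemize}
  \item Since $\supp\mu^\star$ is compact and $w\in L^\infty$, the points $\zeta+\theta w(\zeta)$ lie in a fixed compact set $K$ for $\mu^\star$-a.e.\ $\zeta$ and all $\theta\in[0,1]$.
  \item The curve $\theta\mapsto\mu^\theta$ is $W_2$-Lipschitz, with $W_2(\mu^\theta,\mu^{\theta'})\le|\theta-\theta'|\,\norm{w}_{L^2(\mu^\star)}$, so $\{\mu^\theta\}_{\theta\in[0,1]}$ is a compact set of measures.
  \item Hence $D_\mu\varphi$ is uniformly continuous and bounded on $\{\mu^\theta\}\times K$.
\end{itemize}
The integrand of $g'(\theta)$ therefore converges pointwise as $\theta'\to\theta$ and is dominated by $\sup\abs{D_\mu\varphi}\,\norm{w}_\infty$. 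Dominated convergence then gives continuity of $g'$.

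\textbf{Main obstacle.} The delicate point is Step 1's use of a non-optimal coupling, and that is handled by \cref{proposition: Wasserstein gradients are strong}. Continuity of $g'$ is not strictly needed: a function that is differentiable everywhere on $[0,1]$ with bounded derivative is Lipschitz, so the fundamental theorem of calculus already applies. Continuity is nonetheless the cleanest route, and it also makes the $\theta$-integral in $R$ well defined, since the integrand is jointly measurable and bounded.
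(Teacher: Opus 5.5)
Your proposal is correct and matches the paper's proof: the paper also sets $\Phi(\theta)=\varphi(\mu^\theta)$, gets $\Phi'(\theta)$ from Wasserstein differentiability via \cref{proposition: Wasserstein gradients are strong} (your explicit non-optimal coupling $\gamma_h$ is the step left implicit there), obtains continuity of $\Phi'$ from the continuity hypothesis and dominated convergence, and then integrates over $[0,1]$ and adds and subtracts the $\theta$-independent term. Your remark that an everywhere-defined bounded derivative already suffices for the fundamental theorem of calculus is a valid minor refinement.
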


\begin{proof}
Define $\Phi(\theta):=\varphi(\mu^\theta)$. By Wasserstein differentiability and \cref{proposition: Wasserstein gradients are strong} $\Phi$ is $C^1$ since $    \Phi'(\theta)=\int_{\R^d} D_\mu\varphi[\mu^\theta]\bigl(\zeta+\theta w(\zeta)\bigr)\cdot w(\zeta)\dd\mu^\star(\zeta)$,
which is continuous due to the continuity assumptions and dominated convergence theorem. Integrating $\Phi'$ on $[0,1]$ and adding/subtracting $\int_{\R^d} D_\mu\varphi[\mu^\star](\zeta)\cdot w(\zeta)\dd\mu^\star(\zeta)$
yields the claim. 
\end{proof}
To keep the notation compact, we write here the time dependence with a subscript.

\begin{proof}[Proof of \cref{propo: first order expansion}]
We prove the expansion for $F$; the proof for $G$ is analogous. 
Let $M:=\max_{s\in[t_0,T]}\|\mathfrak w(s)\|_{X(\mu_0)}$. 
Therefore, since $\supp\mu_0$ is compact and $(t,x)\mapsto \bfphi^\star(t,x)$ is continuous, there exists a compact set $K_x\subset\R^d$
such that for $\varepsilon$ small enough, every $s\in[t_0,T]$, every $\theta\in[0,1]$, and $\mu_0$-a.e.\ $x$, $\bfphi_s^\star(x)+\theta\varepsilon \bfw_s(x)\in K_x$ and similarly $\bfy_s^\star+\theta\varepsilon \bfv_s\in K_y\subset\R^c$ for a compact $K_y$.
Hence the relevant measures remain in a compact set $K_\mu\subset\prob_2(\R^d)$.
Since the derivatives of $F$ are continuous (assumption \textnormal{H2)}), they are uniformly continuous on
$K_\mu\times K_x\times K_y$ (and $K_\mu\times K_x\times K_y\times K_x$ for $D_\mu F$); denote by $\omega$ a modulus of continuity
(valid for all such derivatives) with $\omega(r)\to 0$ as $r\to 0$.

\noindent\emph{Step 1: decomposition.}
For $s\in[t_0,T]$ and $\mu_0$-a.e.\ $x$, set
\begin{equation*}
    F_s^\varepsilon(x):=F[\bfrho_s^\varepsilon]\bigl(\bfpsi_s^\varepsilon(x),\bfz_s^\varepsilon\bigr),
\qquad
F_s^\star(x):=F[\bfmu_s]\bigl(\bfphi_s(x),\bfy_s\bigr),
\end{equation*}
and split $    F_s^\varepsilon(x)-F_s^\star(x)=A_s^\varepsilon(x)+B_s^\varepsilon(x)$,
where
\begin{align*}
    A_s^\varepsilon(x)\coloneqq &F[\bfrho_s^\varepsilon]\bigl(\bfpsi_s^\varepsilon(x),\bfz_s^\varepsilon\bigr)
                 -F[\bfrho_s^\varepsilon]\bigl(\bfphi_s(x),y_s\bigr),\\
B_s^\varepsilon(x)\coloneqq &F[\bfrho_s^\varepsilon]\bigl(\bfphi_s(x),\bfy_s\bigr)
                 -F[\bfmu_s]\bigl(\bfphi_s(x),\bfy_s\bigr).
\end{align*}

\noindent\emph{Step 2: first-order expansion of $A_s^\varepsilon$.}
For fixed $s$, the map $(x,y)\mapsto F[\bfrho_s^\varepsilon](x,y)$ is $C^1$, so a first-order Taylor expansion between
$(\bfphi_s(x),y_s)$ and $(\bfpsi_s^\varepsilon(x),\bfz_s^\varepsilon)$ gives
\begin{equation*}
    A_s^\varepsilon(x)
=\varepsilon\,D_yF[\bfrho_s^\varepsilon]\bigl(\bfphi_s(x),\bfy_s\bigr)\bfv_s
+\varepsilon\,D_xF[\bfrho_s^\varepsilon]\bigl(\bfphi_s(x),\bfy_s\bigr)\bfw_s(x)
+r_{xy}^\varepsilon(s,x),
\end{equation*}
with a remainder satisfying
\begin{equation*}
    \sup_{s\in[t_0,T]}\frac{\|r_{xy}^\varepsilon(s,\cdot)\|_{L^2(\mu_0)}}{\varepsilon}
\le C\,\omega(C\varepsilon)\xrightarrow[\varepsilon\to 0]{}0.
\end{equation*}
Moreover, since $W_2(\bfrho_s^\varepsilon,\bfmu_s)\leq \|\bfpsi_s^\varepsilon-\bfphi_s\|_{L^2(\mu_0)}=\varepsilon\|\bfw_s\|_{L^2(\mu_0)}\leq \varepsilon M$,
uniform continuity yields the basepoint replacement
\begin{equation*}
    D_{x,y}F[\bfrho_s^\varepsilon]\bigl(\bfphi_s(x),\bfy_s\bigr)
= D_{x,y}F[\bfmu_s]\bigl(\bfphi_s(x),\bfy_s\bigr) + o_\varepsilon(1)
\quad\text{uniformly in }(s,x),
\end{equation*}
and therefore, uniformly in $s\in[t_0,T]$, 
\begin{equation*}
    A_s^\varepsilon(x)
=\varepsilon\,D_yF[\bfmu_s]\bigl(\bfphi_s(x),\bfy_s\bigr)\bfv_s
+\varepsilon\,D_xF[\bfmu_s]\bigl(\bfphi_s(x),\bfy_s\bigr)\bfw_s(x)
+o(\varepsilon)
\quad\text{in }L^2(\mu_0).
\end{equation*}

\noindent\emph{Step 3: Wasserstein expansion of $B_s^\varepsilon$.}
Consider $\varphi_{s,x}(\mu)\coloneqq F[\mu]\bigl(\bfphi_s(x),\bfy_s\bigr)$ for $s$ and $x$ fixed. Applying
\cref{propo:integral-remainder} to $\varphi_{s,x}$ at $\bfmu_s$ along the perturbation $\bfrho_s^\varepsilon=(\bfphi_s+\varepsilon \bfw_s)_\#\mu_0$ gives
\begin{equation*}
    B_s^\varepsilon(x)
=\varepsilon\int_{\R^d} D_\mu F[\bfmu_s]_{(\bfphi_s(x),\bfy_s)}\bigl(\bfphi_s(\zeta)\bigr)\cdot \bfw_s(\zeta)\,d\mu_0(\zeta)
+r_\mu^\varepsilon(s,x),
\end{equation*}
with
\begin{equation*}
    \sup_{s\in[t_0,T]}\frac{\|r_\mu^\varepsilon(s,\cdot)\|_{L^2(\mu_0)}}{|\varepsilon|}
\le C\,\omega(C\varepsilon)\xrightarrow[\varepsilon\to 0]{}0.
\end{equation*}
Combining the expansions of $A_s^\varepsilon$ and $B_s^\varepsilon$ yields the desired first-order formula with a remainder
$R_F^\varepsilon$ satisfying $\sup_s \|R_F^\varepsilon(s,\cdot)\|_{L^2(\mu_0)}/\varepsilon\to 0$.
\end{proof}

\end{document}